\title[Asymptotic normality in random graphs]
{Asymptotic normality in random graphs with given vertex degrees}
\date{19 December, 2018; revised 30 January, 2019}
\author{Svante Janson}
\thanks{Partly supported by the Knut and Alice Wallenberg Foundation}
\address{Department of Mathematics, Uppsala University, PO Box 480,
SE-751~06 Uppsala, Sweden}
\email{svante.janson@math.uu.se}
\urladdr{http://www.math.uu.se/svante-janson}
\subjclass[2010]{05C80, 60C05; 60F05} 
\numberwithin{equation}{section}
\renewcommand\le{\leqslant}
\renewcommand\ge{\geqslant}
\theoremstyle{plain}
\newtheorem{theorem}{Theorem}[section]
\newtheorem{lemma}[theorem]{Lemma}
\newtheorem{conjecture}[theorem]{Conjecture}
\newtheorem{claim}{Claim}
\theoremstyle{definition}
\newtheorem{example}[theorem]{Example}
\newtheorem{remark}[theorem]{Remark}
\theoremstyle{remark}
\newenvironment{romenumerate}[1][-10pt]{
\addtolength{\leftmargini}{#1}\begin{enumerate}
 }{\end{enumerate}}
\newenvironment{alphenumerate}[1][-10pt]{
\addtolength{\leftmargini}{#1}\begin{enumerate}
 }{\end{enumerate}}
\newenvironment{PXenumerate}[1]{
\addtolength{\leftmargini}{-10pt}%
\begin{enumerate}
 }{\end{enumerate}}
\newcounter{oldenumi}
{\setcounter{oldenumi}{\value{enumi}}
\begin{romenumerate} \setcounter{enumi}{\value{oldenumi}}}
{\end{romenumerate}}
\newcounter{thmenumerate}
\newenvironment{thmenumerate}
{\setcounter{thmenumerate}{0}%
 \def\item{\par
 \refstepcounter{thmenumerate}\textup{(\roman{thmenumerate})\enspace}}
}
{}
\newcounter{xenumerate}   
\newcommand\pfitemx[1]{\par#1:}
\newcommand\pfitemref[1]{\pfitemx{\ref{#1}}}
\newcounter{kasus}
\newcommand\pfcase[1]{\smallskip\refstepcounter{kasus}\noindent
 \emph{Case \arabic{kasus}: #1}}
\newcommand{\refT}[1]{Theorem~\ref{#1}}
\newcommand{\refTs}[1]{Theorems~\ref{#1}}
\newcommand{\refClaim}[1]{Claim~\ref{#1}}
\newcommand{\refL}[1]{Lemma~\ref{#1}}
\newcommand{\refLs}[1]{Lemmas~\ref{#1}}
\newcommand{\refR}[1]{Remark~\ref{#1}}
\newcommand{\refS}[1]{Section~\ref{#1}}
\newcommand{\refSs}[1]{Sections~\ref{#1}}
\newcommand{\refSS}[1]{Section~\ref{#1}}
\newcommand{\refE}[1]{Example~\ref{#1}}
\newcommand{\refConj}[1]{Conjecture~\ref{#1}}
\xdef\klockan{\the\count1.0\the\count255}
\xdef\klockan{\the\count1.\the\count255}\fi
\newcommand{\sumjo}{\sum_{j=0}^\infty}
\newcommand{\sumko}{\sum_{k=0}^\infty}
\newcommand{\sumk}{\sum_{k=1}^\infty}
\newcommand{\sumik}{\sum_{i=1}^k}
\newcommand{\sumin}{\sum_{i=1}^n}
\newcommand\set[1]{\ensuremath{\{#1\}}}
\newcommand\bigset[1]{\ensuremath{\bigl\{#1\bigr\}}}
\newcommand\xpar[1]{(#1)}
\newcommand\bigpar[1]{\bigl(#1\bigr)}
\newcommand\Bigpar[1]{\Bigl(#1\Bigr)}
\newcommand\lrpar[1]{\left(#1\right)}
\newcommand\xcpar[1]{\{#1\}}
\newcommand\bigcpar[1]{\bigl\{#1\bigr\}}
\newcommand\abs[1]{\lvert#1\rvert}
\newcommand\bigabs[1]{\bigl\lvert#1\bigr\rvert}
\newcommand\Bigabs[1]{\Bigl\lvert#1\Bigr\rvert}
\def\rompar(#1){\textup(#1\textup)}    
\newcommand\xfrac[2]{#1/#2}
\newcommand\parfrac[2]{\lrpar{\frac{#1}{#2}}}
\newcommand\Bigparfrac[2]{\Bigpar{\frac{#1}{#2}}}
\newcommand\innprod[1]{\langle#1\rangle}
\def\xexp(#1){e^{#1}}
\newcommand\ntoo{\ensuremath{{n\to\infty}}}
\newcommand\Ltoo{\ensuremath{{L\to\infty}}}
\newcommand\punkt{.\spacefactor=1000}    
\newcommand\iid{i.i.d\punkt}    
\newcommand\ie{i.e\punkt}
\newcommand\eg{e.g\punkt}
\newcommand\viz{viz\punkt}
\newcommand\cf{cf\punkt}
\newcommand{\as}{a.s\punkt}
\newcommand\whp{w.h.p\punkt}
\newcommand\whpx{w.h.p}
\newcommand{\tend}{\longrightarrow}
\newcommand\dto{\overset{\mathrm{d}}{\tend}}
\newcommand\pto{\overset{\mathrm{p}}{\tend}}
\newcommand\eqd{\overset{\mathrm{d}}{=}}
\newcommand\bbN{\mathbb N}
\newcounter{CC}
\newcounter{cc}
\newcommand\E{\operatorname{\mathbb E{}}}
\renewcommand\P{\operatorname{\mathbb P{}}}
\newcommand\Var{\operatorname{Var}}
\newcommand\Cov{\operatorname{Cov}}
\newcommand\Po{\operatorname{Po}}
\newcommand\ga{\alpha}
\newcommand\gb{\beta}
\newcommand\gd{\delta}
\newcommand\gD{\Delta}
\newcommand\gam{\gamma}
\newcommand\gG{\Gamma}
\newcommand\kk{\kappa}
\newcommand\gl{\lambda}
\newcommand\gL{\Lambda}
\newcommand\go{\omega}
\newcommand\gs{\sigma}
\newcommand\gS{\Sigma}
\newcommand\gss{\sigma^2}
\newcommand\gU{\Upsilon}
\newcommand\eps{\varepsilon}
\newcommand\cC{\mathcal C}
\newcommand\cD{\mathcal D}
\newcommand\cE{\mathcal E}
\newcommand\cF{\mathcal F}
\newcommand\cG{\mathcal G}
\newcommand\cJ{\mathcal J}
\newcommand\cP{\mathcal P}
\newcommand\cT{{\mathcal T}}
\newcommand\tH{\widetilde H}
\newcommand\tT{{\tilde T}}
\newcommand\tX{{\widetilde X}}
\newcommand\tZ{{\tilde Z}}
\newcommand\indic[1]{\boldsymbol1\xcpar{#1}} 
\newcommand\bigindic[1]{\boldsymbol1\bigcpar{#1}}
\newcommand\qw{^{-1}}
\newcommand\qww{^{-2}}
\newcommand\qq{^{1/2}}
\newcommand\qqw{^{-1/2}}
\newcommand\qqq{^{1/3}}
\newcommand{\pgf}{probability generating function}
\newcommand{\mgf}{moment generating function}
\newcommand\rhs{right-hand side}
\newcommand\gnp{\ensuremath{G(n,p)}}
\newcommand\gnm{\ensuremath{G(n,m)}}
\newcommand\xoo{_0^\infty}
\newenvironment{Aenumerate}[1][-10pt]{
\addtolength{\leftmargini}{#1}\begin{enumerate}

 }{\end{enumerate}}
\newenvironment{Aenumerateq}
{\setcounter{oldenumi}{\value{enumi}}
\begin{Aenumerate} \setcounter{enumi}{\value{oldenumi}}}
{\end{Aenumerate}}
\newcommand\fS{\mathfrak{S}}
\newcommand\xx[1]{^{(#1)}}
\newcommand\xxq[1]{^{#1}}
\newcommand\qpi{\hat\tau}
\newcommand\ells{i}
\newcommand\GA{A}
\newcommand\GB{B}
\newcommand\ddn{\ensuremath{\mathbf{d}_n}}
\newcommand\ddd{\ensuremath{\mathbf{d}}}
\newcommand\gnddd{\ensuremath{G(n,\ddd)}}
\newcommand\gndd{\ensuremath{G(n,\ddn)}}
\newcommand\ggndd{\ensuremath{G^*(n,\ddn)}}
\newcommand\ggnddd{\ensuremath{G^*(n,\ddd)}}
\newcommand\hgndd{\ensuremath{\widehat{G}(n,\ddn)}}
\newcommand\YY{Y^*}
\newcommand\HX{\gL}
\newcommand\HXX[1]{\HX_{#1}}
\newcommand\HXY[1]{\widehat\HX_{#1}}
\newcommand\hF{\hat F}
\newcommand\hG{\widehat G}
\newcommand\hH{\widehat H}
\newcommand\DD{\cD}
\newcommand\ZZ{Z^*}
\newcommand\bZ{\overline{Z}}
\newcommand\bZZ{\overline{Z}^*}
\newcommand\uZ{{}^{\mathsf u}Z}
\newcommand\uZZ{{}^{\mathsf u}\ZZ}
\newcommand\ubZ{{}^{\mathsf u}\bZ}
\newcommand\ubZZ{{}^{\mathsf u}\bZZ}
\renewcommand\uZ{Z^{\mathsf u}}
\renewcommand\uZZ{Z^{\mathsf u *}}
\renewcommand\ubZ{\bZ^{\mathsf u}}
\renewcommand\ubZZ{\bZ^{\mathsf u *}}
\newcommand\mue{e}
\newcommand\ffac[2]{((#1))_{#2}}
\newcommand\aut{\operatorname{aut}}
\newcommand\sC{\mathsf C}
\newcommand\sK{\mathsf K}
\newcommand\uZZsC[1]{\uZZ_{\sC_{#1}}}
\newcommand\ZZsC[1]{\ZZ_{\sC_{#1}}}
\newcommand\scx[1]{*\mathsf C_{#1}}
\newcommand\refAA{\ref{AD}--\ref{ADmu}}
\newcommand\refAAp{\ref{AD}--\ref{ADmu} and \ref{Ap1}}
\newcommand\refAAm{\ref{AD} and \ref{Am}}
\newcommand\refAAsuper{\refAA{} and \ref{Asuper}}
\newcommand\refAAsuperp{\refAA{} and \ref{Asuper}--\ref{Ap1}}
\newcommand\refAAA{\ref{AD}--\ref{AD2}}
\newcommand\refAAAp{\ref{AD}--\ref{AD2} and \ref{Ap1}}
\newcommand\refAAAsuper{\refAAA{} and \ref{Asuper}}
\newcommand\refAAAsuperp{\refAAA{} and \ref{Asuper}--\ref{Ap1}}
\newcommand\kkk{q} 
\newcommand\htH{\widehat{H}}
\newcommand\dmax{d_{\text{\rm max}}}
\newcommand\glscx[1]{\gl_{\scx{#1}}}
\newcommand\Phiu{\Phi_{u}}
\newcommand\mmg{\mmx{} multigraph}
\newcommand\mmx{marked}
\newcommand\DDX{\iota}
\newcommand\bound{bound}
\newcommand\tgss{\tilde\gs^2}
\newcommand\tgS{\tilde\gS}
\newcommand\hD{\widehat D}
\newcommand\EE{\operatorname{\E^\dagger}}
\newcommand\psiq{h}
\newcommand\Psix{\Psi^\dagger}
\newcommand\GG{G^*_n}
\newcommand\lepsn{\le\eps n}
\newcommand\sumlepsn{\sum_{\ell=L+1}^{\eps n}}
\newcommand\Di{Y_1}
\newcommand\Dii{Y_2}
\newcommand\nx{n^{\mathsf{out}}}
\newcommand\hp{\hat p}
\newcommand\PPsi{\Phi}
 \newcommand\hchi{\widehat\chi}
 \newcommand\bgS{\widehat\gS}
  \newcommand\bgs{\widehat\gs}
  \newcommand\bgss{\bgs^2}
\newcommand\glc{\mu}
\newcommand\CS{Cauchy--Schwarz}
\newcommand\CSineq{\CS{} inequality}
\newcommand\ER{Erd\H os--R\'enyi}
\begin{document}

\begin{abstract} 
  We  consider random graphs with a given degree sequence and show,
  under weak technical conditions,
  asymptotic normality of the number of components isomorphic to a given
  tree,
  first for the random multigraph given by the configuration model and then,
  by a conditioning argument, for the simple uniform random graph with the
  given degree sequence. Such conditioning is standard for convergence in
  probability, but much less straightforward for convergence in distribution
  as here. The proof uses the method of moments, and is based on a new
  estimate of mixed cumulants in a case of weakly dependent variables.

  The result on small components is applied to give a new proof of
  a recent result by  Barbour and R\"ollin on asymptotic normality of the
  size of the giant component in the random multigraph; moreover, we extend
  this to the random simple graph.
\end{abstract}

\maketitle

\section{Introduction}\label{S:intro}

Let $\gnddd$ be a random (simple) graph with $n$ labelled
vertices and a given degree
sequence $\ddd=\bigpar{d_1,\dots,d_n}$, chosen uniformly at random among all
such 
graphs. (We assume tacitly that $\ddd$ is such that some such graph
exists.) 
We will denote the vertices by $v_1,\dots,v_n$; thus $v_i$ has by definition
degree $d_i$.

The standard way to constuct a random graph $\gnddd$
is by the \emph{configuration model},
which was introduced by \citet{Bollobas-config}.
As is well-known, this method constructs first a random multigraph, which we
denote by $\ggnddd$, and then obtains $\gnddd$ by conditioning on the event
that $\ggnddd$ is simple; see \refS{Sconfig}.

We are, as most papers in this field, 
interested in asymptotic results as \ntoo, where the degree sequence
$\ddn=(d_i^{(n)})_1^n$ depends on $n$ and satisfies suitable conditions.
The standard method is to first prove results for the random multigraph
$\ggndd$ and then obtain corresponding results for $\gndd$ by conditioning
as above.
In the present paper, we make the common assumption that the (asymptotic)
degree distribution has a finite second moment; see \refS{Snot} for precise
assumptions. 
Then, it is well-known that $\P\bigpar{\ggndd\text{ is simple}}\ge c$ 
for some $c>0$ (at least for large $n$), see \refR{Rmom},
and as a consequence, any property
that holds \whp{} (i.e., with probability tending to 1) for $\ggndd$ holds
\whp{} also after conditioning on $\ggndd$ being simple.
Hence, the transfer of such results from the multigraph $\ggndd$ to the
simple graph $\gndd$ is trivial.
(Cf.\ \citet{BollobasRiordan-old}, where transfer is made possible 
by far from trivial arguments also when 
$\P\bigpar{\ggndd\text{ is simple}}\to0$.)

However, it has repeatedly been remarked that this simple method fails for
distributional results, for example that some random variable $X_n$ defined
by the graph is asymptotically normal, since probabilities like $\P(X_n\le
x)$ may be changed by the conditioning.
Hence, although in many cases it seems intuitively clear that a few loops or
multiple edges should not affect the asymptotic results, and we still expect
the same asymptotic distribution for $\gndd$ as for $\ggndd$, 
it is typically difficult to prove this rigorously.
We know only two papers where this has been done for 
asymptotic normality of some variables, in both cases
by first proving a result for $\ggndd$ and then
showing that the proof can be modified to work for $\gndd$:
\citet{SJ196} showing asymptotic normality of the size of the $k$-core 
(using a rather complicated extra argument for $\gndd$)
and 
\citet{Riordan-phase} showing
asymptotic normality of the size of the giant component in the weakly
supercritical case (using a simple extra argument for $\gndd$
noting that the proof only uses
local explorations involving $o(n)$ vertices).

The purpose of present paper  is to do this in another case.
\citet{BarbourR} recently proved, for $\ggndd$, 
a theorem on asymptotic normality of a class of ``local'' statistics that
include, for example, the number of small components of a given type.
They further used this to show asymptotic normality of the size of the giant component
in the supercritical case.
Our main results show that the same results on small components and the size
of the giant hold for the random simple graph
$\gndd$.
(We also weaken somewhat
the technical conditions for these results in \cite{BarbourR}.)
Precise statements are given in \refS{Smain} below.

We achieve these results by the time-honoured method of moments.
(\citet{BarbourR} use Stein's method.)
Using the method of moments, we show joint convergence of, e.g., the number of
components of a given type and the numbers of loops and pairs of parallel
edges in $\ggndd$; we may then obtain the result for $\gndd$ by conditioning
on the latter numbers being 0.
In order to do this, we thus show convergence of mixed moments.
Calculations of means and (co)variances are rather straightforward, and
the central part of the proof is to obtain bounds on higher-order cumulants. 
This is similar to results by \citet{Feray-Ewens,Feray}.

To describe the idea, consider  as a simple case the covariance $\Cov(I_1,I_2)$
of two indicators, each indicating that a particular set of vertices (and
half-edges) form a copy of a given graph $H$. If the two sets of vertices
are disjoint, then $I_1$ and $I_2$ are only weakly dependent; we exploit
this by constructing a 
modification $I_2'$ of $I_2$ that is independent of
$I_1$, and such that $I_2'\eqd I_2$ and $\P(I_2'\neq I_2\mid I_1)=O(\E I_2/N)$. 
This implies $\Cov(I_1,I_2)=\E(I_1I_2)-\E(I_1I_2')=\E\bigpar{I_1(I_2-I_2')}
=O\bigpar{\E I_1\E I_2/N}$.
The general case is an
extension of this, although the details are quite technical, see \refS{SX}.
(The idea to construct a suitable independent modification is used also in
the Stein coupling constructed by \cite{BarbourR}, although their 
modification is both constructed and used differently from our modifications.)

One feature of the construction that may be of independent interest is that
we in the proof use the bipartite version of the configuration model to 
construct a random bipartite graph with the vertices $[n]$ of $\ggndd$
as vertices on one side, and the edges of $\ggndd$ as vertices on the other
side; equivalently, we construct first the random bipartite graph obtained
by bisecting each edge in $\ggndd$; see \refS{Sconfig}.

\begin{remark}
  As remarked by \citet{BarbourR}, there are not many  papers at all proving
asymptotic normality for statistics of the multigraph $\ggndd$; 
apart from \cite{SJ196}, \cite{Riordan-phase} 
and \cite{BarbourR} just mentioned, we know of
\citet{AngelEtAl} (number of loops and multiple edges when the second moment
of the degree distribution is infinite; this is obviously not relevant for
simple graphs),
\citet{KhEtAl} (an epidemic on the graph),
\citet{Ball2018} (a more general epidemic model, and the giant component in
site or 
bond percolation),
and
\citet{AthreyaY}
(certain statistics in a subcritical case).
\end{remark}

\begin{remark}
  The method of moments is a very old method. Applications of it are
  typically messy and lead  to long calculations using combinatorial
  estimates of multiple sums, while other methods may give shorter and more
  elegant proofs. Nevertheless, it is a powerful method that
  often works in combinatorial problems.
I have seen several cases where results first have been proved by the method of
moments and later reproved using other methods. It seems likely that the
results here will be another example of this in the future.
For example, 
perhaps a combination of Stein's method for normal approximation and
Stein--Chen's method for Poisson approximation might be used 
instead of the method of moments to show the joint
convergence used in our proofs below.
\end{remark}

\section{Assumptions and notation}\label{Snot}

\subsection{Some notation}\label{SSnot}

$[n]:=\set{1,\dots,n}$.
$\bbN:=\set{0,1,\dots}$.
$\fS_n$ is the set of all permutations of $[n]$.

$(n)_r:=n!/(n-r)!=n(n-1)\dotsm (n-r+1)$ is the descending factorial.
Similarly, 
\begin{equation}
  \label{ffac}
\ffac{n}{r}:=
\frac{n!!}{(n-2r)!!}=
n(n-2)\dotsm(n-2(r-1))=2^r(n/2)_r.
\end{equation}

$(x)_+:=\max(x,0)$.
We interpret $0/0:=0$ and $0\cdot\infty:=0$.

Unspecified limits are as \ntoo;
\whp{} (with high probability) means with probability tending to 1 as \ntoo.
$\dto$ and $\pto$ denote convergence in distribution and probability,
respectively.
$C$ and $c$ denote positive constants that may be different at each occurrence.

If $G$ is a (multi)graph, we let $V(G)$ denote its vertex set and $E(G)$ its
edge set;
furthermore, $v(G):=|V(G)|$ and $e(G):=|E(G)|$ are the numbers of vertices
and edges of $G$, and $\kkk(G)$ is its number of components. 
The number of vertices of degree $k$ is denoted by $n_k(G)$.
For convenience, we often write $v\in G$ for $v\in V(G)$ and $|G|$ for $|V(G)|$.

The degree of a vertex $v\in G$ is denoted $d_G(v)$. (A loop in a multigraph 
contributes 2 to the degree of its endpoint.) Thus $\sum_{v\in G}d_G(v)=2e(G)$.

The component containing a vertex $v\in G$ is denoted $\cC(v)=\cC_G(v)$. It
may be regarded as a rooted multigraph, i.e., a multigraph with a
distinguished vertex, \viz{} $v$. We denote the components of $G$ arranged
in decreasing order by $\cC_1(G), \cC_2(G),\dots,\cC_{\kkk(G)}(G)$
(with ties resolved by any fixed rule, \eg{} lexicographically). 

If $X$ is a random variable and $\cE$ an event, then
$\E\xpar{X;\,\cE}:=\E\bigpar{X\indic{\cE}}$. 

We may ignore obvious roundings  and write \eg{} $n\qq$ or $\eps n$
when we mean
the nearest larger or smaller integer.

\subsection{Basic assumptions}

As in the introduction, we assume that for each $n\ge1$,
we are given a degree sequence $\ddn=(d_i)_1^n$. 
Thus $d_i=d_i\xx n$ 
depends on $n$, as do many other quantities introduced below, 
but often we omit $n$ from the notation for convenience. 
Also as above, $\gndd$ is the random simple graph with degree sequence
$\ddn$, and $\ggndd$ is the random multigraph with degree sequence $\ddn$
given by the configuration model.
(We assume tacitly that $\ddn$ is such that
a graph with these vertex degrees exists; in particular,
$\sum_i d_i$ is even.)
Let
\begin{align}\label{N}
  N:=\sumin d_i.
\end{align}
Thus the random graph $\ggndd$ has $n$ vertices and $N/2$ edges.

Let 
$n_k=n_k(\ddn):=|\set{i\in[n]:d_i=k}|$, the number of vertices of degree
$k$ in $\ggndd$.
Further, let $D_n$ denote the degree of a uniformly random vertex, \ie,
$D_n$ is a random variable with the distribution
\begin{align}\label{Dn}
  \P(D_n=k)=n_k/n, \qquad k\ge0.
\end{align}

We will always assume the following.
\begin{Aenumerate}

\item \label{AD}
$D_n$,
the degree of a randomly chosen vertex,
converges in distribution to a random variable $D$
with a finite and positive mean $\mu:=\E D$.
In other words, there exists a
probability distribution
$(p_k)_{k =  0}^\infty$
such that
\begin{equation}\label{ntopk}
  \frac{n_k}{n} \to p_k = \P(D=k),
  \qquad k \ge 0,
\end{equation}
and $\mu=\sumko kp_k\in(0,\infty)$. 

\item \label{ADmu}
$\E D_n\to \E D=\mu$. 
Assuming \ref{AD}, this is equivalent to $D_n$ being uniformly integrable.
\end{Aenumerate}

(See \eg{} \cite[Theorem 5.5.9]{Gut} 
for the equivalence with uniform integrability.)
Sometimes we will also use one or several of the following assumptions.

\begin{Aenumerateq}

\item\label{AD2} 
$\E D_n^2\to\E D^2<\infty$.
Assuming \ref{AD}, this is equivalent to $D_n^2$ being uniformly integrable.
(This will always be assumed when studying $\gndd$.)

\item \label{Am}
$\sup_n \E D_n^m<\infty$ 
for every $m<\infty$.
This implies uniform integrability of every powers $D_n^m$, and thus, assuming
\ref{AD},
$\E D_n^m\to \E D^m$ for every $m\ge0$; in particular \ref{ADmu} and \ref{AD2}.
(We use this strong condition only in a few results.)

\item \label{Asuper}
$\E D(D-2)>0$. 
This is the 
\emph{supercritical} case, when $\ggndd$ \whp{} has a giant component of order
$\Theta(n)$, see  \citet{MolloyReed95,MolloyReed98} with refinements in, \eg,
\cite{SJ204}, 
\cite{BollobasRiordan-old},
\cite{JossEtAl}.

\item \label{Ap1}
$p_1>0$ and $p_0+p_1<1$.
This excludes some less interesting cases; see \refR{RAp1}.
\end{Aenumerateq}

The degree sequences $\ddn$ will be fixed throughout the paper, and
unspecified constants may depend on them through
(explicit or implicit) constants in these assumptions, 
for example the distribution $(p_i)$.

Note that by \eqref{N}, $\E D_n=N/n$. Hence, \ref{ADmu} is equivalent to
\begin{equation}
  \label{N2}
N/n\to \mu.
\end{equation}
In particular, since $0<\mu<\infty$ by \ref{AD}, $N=\Theta(n)$. Hence, in
estimates with unspecified constants, it does not matter whether we use 
\eg{} $O(n)$
or $O(N)$.

\begin{remark}
  We assume that $\gndd$ has $n$ vertices. This is customary, but as always
  just for notational convenience. See \cite{BollobasRiordan-old}, where the 
results are formulated with the given
  $\ddn$ allowed to have arbitrary lengths $\to\infty$.
\end{remark}

\begin{remark}\label{Rrandom}
  One common version of the configuration model uses random vertex degrees
  $d_i$ that are \iid{} copies of a random variable $D$ 
(\eg{} ignoring one half-edge when the sum of degrees is odd).
Then, \ref{AD} holds a.s., and thus (with suitable assumptions on $D$),
the results below apply conditioned on the vertex degrees.
However, unconditioned results are somewhat different, see \refS{Srandom}.
\end{remark}

Let $\dmax:=\max_i d_i$. 
The uniform integrability of $D_n^2$ in \ref{AD2} means that for any
(deterministic) sequence $\go(n)\to\infty$, 
$\E D_n^2\indic{D_n\ge \go(n)}=\frac{1}{n}\sum_i d_i^2\indic{d_i\ge\go(n)}\to0$;
this implies (choosing \eg{} $\go(n)=n\qqq$) that
  \begin{align}\label{dmaxo}
    \dmax=o\bigpar{n\qq}.
  \end{align}
\ref{AD2} obviously also implies 
\begin{align}
  \label{EDn2}
\E D_n^2=O(1).
\end{align}

\begin{remark}  \label{Rmom}
 The condition \eqref{EDn2} (together with \eg{} \ref{AD}) implies
\begin{align}
  \label{pg}
\liminf_\ntoo \P\bigpar{\ggndd\text{ is simple}}>0,
\end{align}
see \cite{SJ195,SJ281}; hence \refAAA{} imply \eqref{pg}.
The lower bound \eqref{pg} is the basis for most applications of the
configuration models to the random simple graph $\gndd$, since if often
allows simple conditioning as said in the introduction.

When also \eqref{dmaxo} holds, \eqref{pg} can easily be shown using the
method of moments; this has been the standard method since the introduction
of the configuration model \cite{Bollobas-config}, see \cite{SJ195} for a
proof with no further assumptions. This method is also the basis of our proofs
below.
\end{remark}

\begin{remark}
  \label{RAD2}
We will assume \ref{AD2} for our results for the simple graph $\gndd$. 
In fact, the main results
really require only \eqref{dmaxo} and \eqref{EDn2};
by considering subsequences we may then assume that $\E D_n^2\to\mu_2$ for some
$\mu_2<\infty$, and the proofs then hold with $\E D^2$ replaced by $\mu_2$
in \eg{} \eqref{glscx}.
However, we prefer to state the results for the more natural condition
\ref{AD2}.
(An example satisfying \eqref{dmaxo} and \eqref{EDn2} but not \ref{AD2} is
obtained from any example satisfying \refAAA{} by changing the degrees of
$n\qqq$ vertices to $n\qqq$.)

As said above, it is shown in \cite{SJ195,SJ281} that \eqref{pg} holds also
without assuming \eqref{dmaxo}, but the proofs are then somewhat more
complicated, and we do not know whether they can be used to replace the
assumption \ref{AD2} by \eqref{EDn2} in the theorems below.
 In fact, it seems likely that the results extend also to cases
without \eqref{EDn2},
but that would require completely different methods.
\end{remark}

\subsection{The corresponding branching process}\label{SGW}

The usual exploration process reveals the edges (i.e., pairings of
half-edges) in $\ggndd$ one by one as they are needed, 
starting with the half-edges at a vertex $v$ and then continuing with the
unpaired half-edges at the neighbours of $v$, and so on, until the component
$\cC(v)$ is fully explored. 
Let $V$ be a uniformly random vertex in $\ggndd$.
As is well-known, see \eg{} \cite[Lemma~4]{BollobasRiordan-old} for a formal
statement,  
assuming \refAA,
the exploration process of $\cC(V)$ may be 
approximated by
a branching process $\cT$ (regarded as a rooted tree, finite or infinte), 
in the sense that for
any fixed $K$, the first $K$ generations of the two processes may be coupled
such that they are isomorphic \whpx. (In particular, if one of the processes
has only $k<K$ non-empty generations, then so has the other \whpx.)
The branching process $\cT$ is a Galton--Watson process where the root has
offspring distribution $D$ as in \ref{AD}, and all other vertices have
offspring distribution $\hD-1$ where $\hD$ has the size-biased distribution
\begin{align}
  \label{hD}
\P(\hD=k):=\hp_k:=\frac{kp_k}{\mu},
\qquad k\ge1.
\end{align}

For a rooted unlabelled tree $T$, we define
\begin{align}\label{PT}
  p_T:=\cP(\cT\cong T),
\end{align}
with $\cong$ meaning isomorphism (\ie, equality) as unlabelled rooted graphs.
Similarly, if $T$ is an unrooted unlabelled tree, we define $p_T$ by
\eqref{PT}, now interpreting $\cong$ as isomorphism (equality) of unlabelled
unrooted graphs.

For random variables that are functionals $g(\cT)$ of $\cT$, we write
\begin{align}\label{EE}
  \EE g(\cT) :=
  \E\bigpar{g(\cT);\,|\cT|<\infty}
  =\sum_Tp_Tg(T),
\end{align}
summing over finite unlabelled (rooted or unrooted) trees $T$.
Obviously, it here suffices that $g$ is defined for finite trees.

We denote the \pgf{} of $D$ by
\begin{align}
  \label{f}
f(z):=\E z^D=\sum_k p_kz^k.
\end{align}
In the supercritical case \ref{Asuper}, let $\zeta$ be the unique root
in $[0,1)$ of
\begin{align}\label{eb}
  f'(\zeta)=\mu\zeta.
\end{align}
(If \ref{Asuper} does not hold, we may take $\zeta:=1$, which always satisfies
\eqref{eb}.)
As is well-known, then $\P\bigpar{|\cT|<\infty}=f(\zeta)$.
See further \refSs{Sgiant} and \ref{Svar2}.

\section{Main results}\label{Smain}

The main results are stated below. Proofs are given in later sections.
We begin with some notation for subgraph counts.

Let $G$ and $H$ be (multi)graphs. (Think of $G$ as big and $H$ small.)
We often regard the small graph $H$ as unlabelled; in particular, when we
talk about several distinct graphs $H$, we mean non-isomorphic.
However, for formal definitions it is convenient to
regard both $G$ and $H$ as labelled, meaning that both vertices and edges, and
also half-edges, are labelled. (For simple graphs it suffices to
label vertices, since the edges are identified by their endpoints.
Similarly, half-edges need to be labelled only for loops.)
We count subgraphs of $G$ isomorphic to $H$ in four different ways:
\begin{itemize}
\item 
$Z_H(G)$ is the number of \emph{labelled} copies of $H$ in $G$.
\item 
$\uZ_H(G)$ is the number of \emph{unlabelled} copies of $H$ in $G$.
\item 
$\bZ_H(G)$ is the number of \emph{labelled isolated} copies of $H$ in $G$.
\item 
$\ubZ_H(G)$ is the number of \emph{unlabelled isolated} copies of $H$ in $G$.
\end{itemize}
Thus, $Z_H(G)$ can be defined as the
number of injective maps $H\to G$, mapping vertices to vertices, edges to
edges and half-edges to half-edges such that the relations between them are
preserved. $\bZ_H(G)$ is the number of such maps such that the vertices in
the image $H'$ of $H$ have no other edges than those in $H'$,
\ie, the degrees are preserved.
Furthermore, 
\begin{align}\label{uZ}
  \uZ_H(G)=\frac{1}{\aut(H)} Z_H(G),
&&&
  \ubZ_H(G)=\frac{1}{\aut(H)} \bZ_H(G),
\end{align}
where $\aut(H)=Z_H(H)$ is the number of automorphisms of $H$.
If $H$ is connected,
then $\ubZ_H(G)$ is the number of components of $G$ isomorphic to $H$.

The simple relations \eqref{uZ} shows that it is equivalent to study labelled
or unlabelled copies. Nevertheless we will consider both since it often is
natural to count unlabelled copies (for example when counting components),
but labelled copies often (but not always) are more convenient in our proofs.

We simplify the notation and write $Z_H:=Z_H\bigpar{\gndd}$
and
$\ZZ_H:=Z_H\bigpar{\ggndd}$, and similarly for the other versions of
subgraph counts.

\begin{example}\label{Esimple}
  Let $\sC_1$ be a loop and $\sC_2$ a double edge. Then a multigraph $G$ is
  simple if and only if $\uZ_{\sC_1}(G)=\uZ_{\sC_2}(G)=0$.
If particular, we obtain $\gndd$ from $\ggndd$ by conditioning on the event 
$\uZZ_{\sC_1}=\uZZ_{\sC_2}=0$.
\end{example}

\subsection{Small  components}

Our first theorem gives the asymptotic distributions for the number of
small components of different types in $\ggndd$.
It is well-known that most small components are trees. More precisely,
for a tree $T$, there is typically a linear number of components $T$,
and  asymptotic normality
was recently shown by \citet{BarbourR} using Stein's method.
We give a new proof of this (under somewhat weaker conditions), since this
is the basis of our work below.
We complement this with the easy results
that the number of components isomorphic to a given
conneted unicyclic graph has an asymptotic Poisson distribution, and that
there
\whp{} is no small (i.e.\ fixed size) component with more than one cycle.
The latter results follow by standard moment calculations and are
presumably known, although we do not know any specific reference.
\begin{theorem}[Mainly \cite{BarbourR}]\label{T1}
Consider the random multigraph $\ggndd$ and assume \refAAp.
  \begin{romenumerate}
  \item\label{T1t}
If\/ $H$ is a tree, then 
\begin{align}\label{t1t}
\frac{  \ubZZ_H-\E\ubZZ_H}{\sqrt n}\dto N\bigpar{0,\gss_H}
\end{align}
for some $\gss_H=\gs_{H,H}\ge0$ given by \eqref{gshh} below.
Furthermore,
\begin{equation}\label{glh}
\E\ubZZ_H/n\to
  \gl_H:=\frac{\mu^{-e(H)}}{\aut(H)}\prod_{u\in H} p_{d_H(u)} d_H(u)!
\end{equation}
and, if $v(H)>1$,
\begin{align}
  \label{t1a}
\gss_H>0 \iff
\gl_H>0\iff 
d_H(u)\in\set{k:p_k>0}\; \forall u\in H.
\end{align}
Moreover, for every tree $H$,
\begin{align}
  \label{gl=p}
\gl_H=p_H/|H|,
\end{align}
where $p_H$ is given by \eqref{PT} (for unrooted $H$).

\item \label{T1u}
If\/ $H$ is a connected unicyclic multigraph, so $e(H)=v(H)$, then
\begin{align}\label{t1u}
\ubZZ_H\dto \Po(\gl_H),
\end{align}
with $\gl_H$ as in \eqref{glh}.

\item \label{T1v}
If\/ $H$ is a connected multigraph with more than one cycle, i.e.,
$e(H)>v(H)$, then $\ubZZ_H=0$ \whp{} 
  \end{romenumerate}
Moreover, the limits in \ref{T1t}--\ref{T1v} hold jointly, for any 
finite number of distinct connected
multigraphs $H_i$, with a
joint limit $N\bigpar{0,\gS}$ for the trees, for a covariance matrix
$\gS=\bigpar{\gs_{H_i,H_j}}$ given by
\begin{align}
\gs_{H_1,H_2}
:=
\gd_{H_1,H_2}  \gl_{H_1}
+
\gl_{H_1}\gl_{H_2}
\Bigpar{\frac{2e(H_1)e(H_2)}{\mu } -\sum_{k\ge0}\frac{n_k(H_1)n_k(H_2)}{p_k}},
\label{gshh}
\end{align}
and with the Poisson limits in \ref{T1u} independent of each other
and of the joint normal limit in \ref{T1t}.
Furthermore, this holds with convergence of all mixed moments.
In particular, if $H_1$ and $H_2$ are trees, then
\begin{align}\label{cov}
  \Cov\bigpar{\ubZZ_{H_1},\ubZZ_{H_2}}=\gs_{H_1,H_2} n + o(n).
\end{align}
Moreover, if each tree $H_i$ has $v(H_i)>1$ and
satisfies the condition in \eqref{t1a}, 
then
the covariance matrix $\gS$ is non-singular.
\end{theorem}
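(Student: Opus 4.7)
The plan is to show $v^T\gS v>0$ for every nonzero $v\in\mathbb R^m$, which together with the automatic positive semi-definiteness of $\gS$ (as the limit of legitimate covariance matrices) gives non-singularity. Using the tree identity $2e(H_i)=\sum_k k\,n_k(H_i)$ and setting $B_k:=\sum_i v_i\gl_{H_i}n_k(H_i)$, the quadratic form rearranges to
\begin{equation*}
v^T\gS v=\sum_i v_i^2\gl_{H_i}+\frac{\bigl(\sum_k kB_k\bigr)^2}{2\mu}-\sum_k\frac{B_k^2}{p_k}.
\end{equation*}
The first term is strictly positive for $v\ne0$ by the condition in \eqref{t1a} (which gives $\gl_{H_i}>0$); the essential difficulty is to show that this contribution is not cancelled by the subtracted last term.

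The main step is a gradient-type characterisation of the null space: since $\gS$ is psd, $v^T\gS v=0$ implies $\gS v=0$, and writing out $(\gS v)_i=0$ gives, after simplification,
\begin{equation*}
v_i=\sum_k n_k(H_i)\,\theta_k,\qquad \theta_k:=\frac{B_k}{p_k}-\frac{kA}{\mu},\qquad A:=\sum_j v_j\gl_{H_j}e(H_j).
\end{equation*}
The crucial observation is that the right-hand side depends on $H_i$ only through its degree multiset $\mathbf n(H_i)=(n_k(H_i))_k$. In particular, any candidate null vector $v$ must be constant on each multiset-equivalence class of our tree list. If the trees already have pairwise distinct degree multisets, substituting back into the definitions of $A$ and $B_k$ yields a closed linear system on the auxiliary vector $(\theta_k)$, which a direct computation shows has only the trivial solution, giving $v=0$.

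The main obstacle is the degenerate case in which several trees share a common degree multiset. Here I would reduce to the generic case by a Sylvester-type determinant identity: grouping the trees by multiset and writing $L_r:=\sum_{i\in\text{group }r}\gl_{H_i}$, a short computation shows that the grouping matrix $G$ satisfies $G^T\Lambda^{-1}G=\operatorname{diag}(L_r)$ and hence
\begin{equation*}
\det\gS=\frac{\prod_i\gl_{H_i}}{\prod_r L_r}\,\det\tilde\gS,
\end{equation*}
where $\tilde\gS$ is the analogous $R\times R$ covariance matrix for $R$ `super-trees' of masses $L_r$ indexed by the distinct degree multisets appearing in our list. Non-singularity of $\gS$ thus reduces to that of $\tilde\gS$, which is the already-handled distinct-multiset case. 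The hardest part of the proof will be the final linear-algebra verification for $\tilde\gS$: the Cauchy--Schwarz-type inequality that underlies its positive semi-definiteness must be shown to be strict in every nonzero direction, and I expect this to follow from distinctness of the multisets combined with the strict positivity $\gss_{H_i}>0$ already established in part \ref{T1t}, although the explicit identification of the equality case in the underlying Cauchy--Schwarz step (involving $\E D^2$ versus $2\mu$) appears to be the main technical point to pin down.
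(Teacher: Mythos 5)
Your setup is sound: the rearrangement of the quadratic form, the use of positive semidefiniteness to pass from $v^T\gS v=0$ to $\gS v=0$, and the formula $v_i=\sum_k n_k(H_i)\theta_k$ with $\theta_k=B_k/p_k-kA/\mu$ are all correct. The gap is the step ``substituting back into the definitions of $A$ and $B_k$ yields a closed linear system on $(\theta_k)$, which a direct computation shows has only the trivial solution.'' This cannot work as stated, because the equations $(\gS v)_i=0$ are exactly the $m$ linear conditions defining the kernel, and back-substituting the expression for $v_i$ into $A$ and $B_k$ merely rewrites $\gS v=0$; it produces no new constraint. Already in the $1\times 1$ case the substitution collapses to $1+\gl_H\innprod{H,H}=0$ (with $\innprod{\cdot,\cdot}$ the bilinear form from \eqref{gshh}), which is precisely the statement $\gss_H=0$ that one is trying to refute, not a contradiction. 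Positive semidefiniteness of $\gS$ gives only $1+\gl_H\innprod{H,H}\ge 0$ and therefore does not break this circularity; some input beyond the entries of the finite matrix $\gS$ is needed.

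That extra input is the heart of the paper's proof of \refL{LM2}, and it is absent from your outline: one leaves the fixed finite list of trees. From $v^T\gS v=0$, the variance asymptotics \eqref{cov} and Cauchy--Schwarz yield $\sum_i v_i\gs_{H_i,H}=0$ for \emph{every} tree $H$, not just the $H_j$ in the list. Writing $\gs_{H_i,H}=\gd_{H_i,H}\gl_{H_i}+\gl_{H_i}\gl_H\innprod{H_i,H}$ and choosing $r>1$ with $p_r>0$ (this is where \ref{Ap1} enters), the paper grows any test tree $T$ into $T^{(j)}$ by repeatedly replacing a leaf with a degree-$r$ vertex and $r-1$ new leaves. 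For large $j$ the trees $T^{(j)}$ are distinct from all $H_i$, so the diagonal $\gd$-term drops out, leaving $\sum_i v_i\gl_{H_i}\innprod{H_i,T^{(j)}}=0$; since $\innprod{H_i,T^{(j)}}$ is affine in $j$, this identity extends to $j=0$. Plugging back with $H=H_1$ then leaves only $v_1\gl_{H_1}=0$, a contradiction. Your Sylvester-type reduction for shared degree multisets also does not resolve matters: the ``super-trees'' are not actual trees, so the reduced problem is not an instance of the distinct-multiset case, which in any event has not been established by the proposal. In short, the proposal is missing the key mechanism (relations for trees outside the list, obtained via the growing-tree argument) and the claimed direct computation is circular.
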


In \eqref{gshh}, $\gd_{H_1,H_2}$ is the Kronecker delta, equal to 1 when
$H_1=H_2$ (as unlabelled graphs); furthermore, 
recall that  $0/0=0$ and $0\cdot\infty=0$.

\begin{remark}
  The case $v(H)=1$, \ie{} $v=\sK_1$, has to be excepted in \eqref{t1a},
  since trivially $\uZZ_{\sK_1}=n_0$ is deterministic, so $\gss_{\sK_1}=0$.
\end{remark}

\begin{remark}
  The moment convergence in \eqref{t1t} implies that if $H$ is a tree such
  that
$\gss_H>0$, then \eqref{t1t} can be written
  \begin{align}
    \frac{\ubZZ_H-\E\ubZZ_H}{\bigpar{\Var\ubZZ_H}\qq} \dto N(0,1),
  \end{align}
so $\ubZZ_H$ really is asymptotically normal.
The same applies in other theorems below, \eg{} \refT{T2}.
\end{remark}

\begin{remark}\label{RAp1}
  The assumption \ref{Ap1} excludes the two extremal cases $p_1=0$ and
  $p_0+p_1=1$, which are less interesting in the present context;
  in these cases, there are only a few ($o(n)$) small components
except (possibly)  isolated vertices and
edges.
\refT{T1} hold also when \ref{Ap1} fails, except that
then $\gss_{H}=0$ for every tree $H$
(and \eqref{t1a} may fail) 
and thus
\eg{} \eqref{t1t} says only that
the variable converges to 0 in probability; \cf{} \refT{T1xx} 
and \refR{Rmarked},
where \ref{Ap1} is not assumed.
\end{remark}

\begin{remark}
  Cf.\ the similar results for \ER{} graphs
  $G(n,p)$ and $G(n,m)$, shown by \eg{}
 \cite{ER}, \cite{Barbour},  \cite{BarbourKR};
  see  \refE{ER}.
\end{remark}

\begin{remark}
  In spite of \eqref{glh}, we cannot in general replace $\E\ubZZ_H$ by its
  asymptotic value $n\gl_H$ in \eqref{t1t}; the reason is that this would
  require $\E\ubZZ_H=n\gl_H+o\bigpar{n\qq}$, and this rate of convergence in
  \eqref{glh} does not hold without further assumptions on the rate of
  convergence in \eqref{ntopk}. In particular, it does not hold in the case
  of random vertex degrees mentioned in \refR{Rrandom}, see \refS{Srandom}.
\end{remark}

\begin{remark}
As said above, we reprove the result by \citet{BarbourR} on tree components.
We do not treat the more general class of local statistics studied
in \cite{BarbourR}, and leave it as an open problem whether our methods
apply to them in full generality. 
On the
other hand, we include below results on subgraph counts not covered by 
\cite{BarbourR}.
\end{remark}

One of our main results is that \refT{T1}
transfers to the simple random graph
$\gndd$, under weak assumptions.
Obviously, we have to consider only simple graphs $H$, since
$\uZ_H=0$ if $H$ contains a loop or a multiple edge.

\begin{theorem}\label{T2}
Assume \refAAAp.
Then, for simple graphs $H$,
the results in \refT{T1} hold also for the random simple graph $\gndd$
and variables $\ubZ_H$, 
  with the same $\gl_H$, $\gss_H$ and $\gS$.
Furthermore, if $H$ is a tree, then
\begin{align}\label{unorm}
  \E\ubZ_H-\E\ubZZ_H=o\bigpar{n\qq},
\end{align}
and thus it does not matter whether we normalize $\ubZ_H$ as in
\eqref{t1t} using\/  $\E\ubZ_H$ or\/ $\E\ubZZ_H$.
\end{theorem}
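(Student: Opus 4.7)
The plan is to exploit the identification $\gndd=(\ggndd\mid\ggndd\text{ is simple})$ and condition $\ggndd$ on the simplicity event $\cE_n:=\set{\uZZ_{\sC_1}=\uZZ_{\sC_2}=0}$ from \refE{Esimple}, where $\sC_1$ is a loop and $\sC_2$ is a double edge. The obstruction noted in the introduction --- that distributional convergence in $\ggndd$ need not be preserved under this conditioning --- is overcome by establishing \emph{joint} convergence in \refT{T1} together with the simplicity-violating counts $\uZZ_{\sC_1}$ and $\uZZ_{\sC_2}$. Thus I would first prove the following extension of \refT{T1}: jointly with the subgraph counts treated there, one has $\uZZ_{\sC_i}\dto\Po(\gl_{\sC_i})$ for $i=1,2$, with these two Poisson variables mutually independent and also independent of the normal and Poisson limits in \refT{T1}, and with convergence of all joint mixed moments. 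Note that $\uZZ_{\sC_1}$ and $\uZZ_{\sC_2}$ count \emph{all} loops and all pairs of parallel edges, not only isolated occurrences as in $\ubZZ_{\sC_i}$, so this is not a direct consequence of \refT{T1}; however, the cumulant machinery from \refS{SX} adapts without essential change, since those estimates concern pairings of half-edges and are indifferent to whether the enclosing vertices carry further incident edges.

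Granted the extended joint convergence, the transfer to $\gndd$ is a conditioning argument. Write $\mathbf X_n$ for the vector of normalized and centered subgraph counts in \refT{T1} restricted to simple $H_i$, with joint limit $\mathbf X$. Since $\uZZ_{\sC_1}$ and $\uZZ_{\sC_2}$ are $\bbZgeo$-valued and their Poisson limits $L,M$ are independent of $\mathbf X$, for any continuity set $B$ of $\cL(\mathbf X)$ the set $B\times\set{(0,0)}$ has boundary of zero limit-measure, so the portmanteau theorem gives
\begin{equation*}
\P(\mathbf X_n\in B,\,\cE_n)\to \P(\mathbf X\in B)\,e^{-\gl_{\sC_1}-\gl_{\sC_2}},
\end{equation*}
and in particular $\P(\cE_n)\to e^{-\gl_{\sC_1}-\gl_{\sC_2}}>0$ (compare \eqref{pg}). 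Dividing yields $\P(\mathbf X_n\in B\mid\cE_n)\to\P(\mathbf X\in B)$, and as this conditional law is by definition the law of the corresponding variables under $\gndd$, all distributional conclusions of \refT{T1} for simple $H$ carry over to $\gndd$. The Poisson limits for simple unicyclic $H$ together with their independence, and the claim that $\ubZZ_H=0$ \whp{} for simple $H$ with $e(H)>v(H)$, transfer by the same mechanism.

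For the mean correction \eqref{unorm} and convergence of higher moments, note that moment convergence in the extended \refT{T1} makes $n\qqw(\ubZZ_H-\E\ubZZ_H)$ bounded in every $L^p$; combined with the distributional convergence above and $\P(\cE_n)$ bounded away from $0$, uniform integrability of $n\qqw(\ubZZ_H-\E\ubZZ_H)\indic{\cE_n}/\P(\cE_n)$ gives
\begin{equation*}
  \frac{\E\ubZ_H-\E\ubZZ_H}{n\qq}
=\E\Bigsqpar{\frac{\ubZZ_H-\E\ubZZ_H}{n\qq}\Bigm|\cE_n}
\longrightarrow\E W_H=0,
\end{equation*}
which is \eqref{unorm}; convergence of higher conditional moments of $\ubZ_H$ follows by the same argument. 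The principal difficulty is the extended \refT{T1}: one must show that the joint cumulants of order $\ge3$ of the augmented family $\set{\ubZZ_{H_i}}\cup\set{\uZZ_{\sC_1},\uZZ_{\sC_2}}$ vanish in the limit, and that the asymptotic covariance between any tree-count and $\uZZ_{\sC_i}$ is zero. These are exactly the computations made accessible by the new mixed-cumulant estimates of \refS{SX}.
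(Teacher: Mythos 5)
Your proposal follows essentially the same route as the paper's proof: establish joint convergence of the counts in \refT{T1} together with the global loop and double-edge counts $\uZZ_{\sC_1},\uZZ_{\sC_2}$ (this is precisely the paper's \refL{LLJ}), then transfer to $\gndd$ by conditioning on $\uZZ_{\sC_1}=\uZZ_{\sC_2}=0$, obtaining distributional convergence via portmanteau and moment convergence (hence \eqref{unorm}) via bounded moments of the conditioned variables. The one place where your sketch mildly understates the work is the claim that the cumulant machinery of \refS{SX} ``adapts without essential change'': because the vertices in a copy of $\sC_j$ have unconstrained degree, the term-counting in \refL{LL1} produces extra factors $\E D_n^m$ for ``free'' vertices, and controlling these requires the second-moment hypothesis \ref{AD2} (via \eqref{dmaxo}--\eqref{EDn2}) together with a new graph-theoretic inequality compensating for the degree growth (the paper's \refLs{LL11} and \ref{LZ2}); this is exactly why \refT{T2} assumes \refAAA{} while \refT{T1} needs only \refAA.
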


\subsection{Small subgraphs}\label{SSsub}

With a stronger moment assumption on the degrees, we have similar results
for the number of copies (not necessarily isolated) of a given tree.
We assume \ref{Am}, \ie, that every moment $\E D_n^m$ is bounded, and thus
in particular that $D$ has finite moments of all orders.
(This implies \ref{ADmu}--\ref{AD2} as said above.)

\begin{theorem}\label{T1xx}
Assume \refAAm.
  \begin{romenumerate}
    
  \item \label{T1x}
 For every tree $T$ there exists $\tgss_T\ge0$ such that
\begin{align}\label{t1zz}
  \frac{\uZZ_{T}-\E\uZZ_{T}}{\sqrt n}&\dto N\bigpar{0,\tgss_{T}}
.\end{align}

Moreover, \eqref{t1zz}
holds jointly for any number of
such trees $T$, with a joint limit $N(0,\tgS)$ for some covariance matrix
$\tgS$.
Furthermore, the limits hold with convergence of all moments.

\item \label{T2x}
The results in \ref{T1x} hold also for the random simple graph $\gndd$
and variables $\uZ_T$, 
  with the same 
$\tgss_T$ and $\tgS$.
Furthermore, 
\begin{align}\label{unormx}
  \E\uZ_T-\E\uZZ_T=o\bigpar{n\qq},
\end{align}
and thus it does not matter whether we normalize $\uZ_T$ as in
\eqref{t1zz} using\/  $\E\uZ_T$ or\/ $\E\uZZ_T$.
\end{romenumerate}
\end{theorem}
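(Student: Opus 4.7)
The plan is to prove \ref{T1x} by the method of moments, in parallel with \refT{T1}, and to deduce \ref{T2x} from a joint limit that also includes the loop and double-edge counts, so that conditioning on simplicity becomes transparent. For \ref{T1x}, I would write $Z_T = \sum_{\phi} I_\phi$, where $\phi$ ranges over injective embeddings of the labelled tree $T$ into $\ggndd$ (matching vertices, edges and half-edges). The configuration model makes each $\P(I_\phi = 1)$ an explicit product of falling factorials $\prod_{u \in V(T)}(d_{\phi(u)})_{d_T(u)}$ divided by $\ffac{N-1}{e(T)}$. Summing over $\phi$ and using \ref{Am} to control sums $\sum_i d_i^m$ of arbitrarily high powers (which, unlike in \refT{T1}, cannot be avoided, since a vertex of a copy of $T$ in $\ggndd$ may have any degree) gives $\E \uZZ_T = \Theta(n)$ and $\Var \uZZ_T = \Theta(n)$, the coefficient $\tgss_T$ being determined by pairs of embeddings whose images overlap in a restricted way.

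Joint asymptotic normality then reduces to showing, for $k \ge 3$ and any trees $T_1,\dots,T_k$,
\[
  \kappa_k\bigpar{\uZZ_{T_1},\dots,\uZZ_{T_k}} = o\bigpar{n^{k/2}}.
\]
I would establish this cumulant bound via the weak-dependence estimate developed in \refS{SX}: if two embeddings $\phi_1,\phi_2$ have disjoint images, one constructs an independent modification $I_{\phi_2}'$ of $I_{\phi_2}$ inside the configuration model with $\P(I_{\phi_2}' \neq I_{\phi_2} \mid I_{\phi_1}) = O(1/N)$, and iterating over $k$ copies yields an $O(n^{k-1})$ bound on the cumulant contribution from pairwise vertex-disjoint embeddings; $k$-tuples with nontrivial overlap are summed directly from the factorial-moment formula using \ref{Am} and shown to be of smaller order. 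For \ref{T2x}, the same calculation applied to the enlarged vector $(\uZZ_{T_1},\dots,\uZZ_{T_m},\uZZ_{\sC_1},\uZZ_{\sC_2})$ shows, as in \refT{T1}\ref{T1u}, that the tree counts have a Gaussian limit asymptotically independent of the pair of Poisson cycle counts: the cycle counts are of order $O(1)$, so $\Cov(\uZZ_T,\uZZ_{\sC_i}) = o(\sqrt n)$ and higher mixed cumulants are similarly negligible after normalization. Since $\gndd \eqd (\ggndd \mid \uZZ_{\sC_1} = \uZZ_{\sC_2} = 0)$ and this event has probability bounded below by \refR{Rmom}, the asymptotic independence transfers distributional and moment convergence to $\uZ_T$ with the same $\tgss_T$ and $\tgS$; the mean bound \eqref{unormx} then follows from $\E \uZ_T - \E \uZZ_T = \Cov(\uZZ_T,\indic{\ggndd \text{ simple}})/\P(\ggndd \text{ simple})$, whose numerator is $o(\sqrt n)$ by the same asymptotic independence.

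The main obstacle is the cumulant bound for $\uZZ_T$. Because these copies can be embedded anywhere in $\ggndd$ rather than as isolated components (as in \refT{T1}), each embedding constrains only $O(1)$ half-edges, and the combinatorial classification of overlap patterns among $k$ embeddings is correspondingly richer; sums over degrees of the unconstrained vertices are controlled only by the full all-moments assumption \ref{Am}, and making the $o(n^{k/2})$ bound uniform across all overlap types, particularly in configurations where many embeddings meet at a few very high-degree vertices, is the technically heaviest step.
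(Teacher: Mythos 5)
Your overall strategy matches the paper's: the paper proves \refT{T1xx} by a one-line reduction to \refT{T1}, substituting \refL{LL11} (with all vertices marked free) for \refL{LL2} and using \ref{Am} to absorb the extra $\E D_n^{d_{\cF}(w)}$ factors in \eqref{zagfree}; part \ref{T2x} is then proved exactly as \refT{T2}. You re-derive this from scratch but correctly identify all the key ingredients: the cumulant method, the weak-dependence estimate of \refS{SX}, the role of \ref{Am} in controlling sums over degrees of unconstrained vertices, and the conditioning on $\uZZ_{\sC_1}=\uZZ_{\sC_2}=0$ (together with \eqref{pg}) to transfer to $\gndd$. The mean bound $\E\uZ_T-\E\uZZ_T=o(n\qq)$ via the covariance identity is a valid reformulation of the paper's argument that the conditional normalized mean tends to zero.

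One concrete error: your claimed bound $O(n^{k-1})$ for the total cumulant contribution from pairwise vertex-disjoint embeddings does not suffice and is in fact not what the argument yields. After normalizing by $n^{k/2}$, $O(n^{k-1})$ gives $O(n^{k/2-1})$, which \emph{diverges} for $k\ge3$, so the method of moments would fail. The correct bound is $O(n)$ uniformly in $k$. To see this, \refL{L1} gives $|\kk(I_{\phi_1},\dots,I_{\phi_k})|\le CN^{-(b-1)-\mue}$ where for $k$ disjoint subdivided trees $\hT$ one has $b=k$ blocks and $\mue=2ke(T)$ indicator pairs; the number of such $k$-tuples, counting both real vertices and cuffs in the bipartite formulation and absorbing the half-edge factors by \ref{Am}, is $O\bigpar{n^{k(v(T)+e(T))}}$. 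Multiplying gives $O\bigpar{n^{k(v(T)-e(T))-(k-1)}}=O(n)$ since $v(T)-e(T)=1$. The cancellation from the $N^{-(b-1)}$ factor is essential and is precisely what Lemma~\ref{L1} delivers (and what \refL{LL11} together with \refL{LZ1} packages). Relatedly, your statement $\P(I_{\phi_2}'\neq I_{\phi_2}\mid I_{\phi_1})=O(1/N)$ should be $O(\E I_{\phi_2}/N)$ as in the paper's introduction; the stronger (smaller) bound is what makes the counting above close.
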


\begin{remark}\label{RG1}
  We consider only trees in \refT{T1xx}.
If $H$ is a cycle, then the proofs below show that $\uZZ_H$ and $\uZ_H$ are
asymptotically Poisson distributed.
However, for general unicyclic $H$ this is not true;
this is 
similar to the case
for $\gnp$, see \eg{} \cite{BollobasWierman} and \cite[Example 3.21]{JLR}.
It can be shown that for a general connected unicyclic $H$, $\uZZ_H$ and
$\uZ_H$ converge to a compund Poisson distribution.
This is besides the point of the present paper, so we leave the details to
the reader.
\end{remark}

\begin{remark}\label{RG2}
  We leave explicit calculations of asymptotics of means and variances in
\refT{T1xx} to the reader. 
Note that (for the simple graph $\gndd$)  there are further 
trivially  deterministic cases: 
$\uZ_{\sK_2}=N/2$,  the
  number of edges in $\gndd$;
similarly, for any $r\ge1$, $\uZ_{\sK_{1,r}}=\sumin \binom{d_i}{r}$. 
We do not know whether there also are further cases 
where the variance (perhaps of some linear combination) is $o(n)$ and thus
vanishes in the limit taken in \eqref{t1zz}.
\end{remark}

\begin{remark}\label{RAm}
  The condition \ref{Am} assumes that all moments of the degree distribution
are 
bounded. This is presumably stronger than necessary; it seems likely that 
it is enough that $\sup_n\E D_n^M<\infty$ for some $M$ depending on $T$ for
convergence in distribution \eqref{t1zz}, 
although \ref{Am} presumably is required for moment convergence.
Some condition of this type is necessary, since otherwise $\uZZ_T$
may be dominated by a  few vertices of high degrees, as in the following
example.
\end{remark}

\begin{example}\label{Edoublestar}
  Let $\ddn$ have one vertex $v_1$ of degree $n^{0.4}$, $n^{0.5}$ vertices
  $u_i$ of
  degree $n^{0.1}$, $n/2$ vertices of degree 3 and the rest, about $n/2$,
  of degree 1. Then \refAAA{} are satisfied, with $p_1=p_3=\xfrac12$ and
  $\mu=2$. 
Let $H$ be the tree with $v(H)=22$ obtained by taking two disjoint stars
$\sK_{1,10}$ and joining their central vertices by an edge. 

Let $X_n$ be the number of edges in $\ggndd$ between $v_1$ and a vertex of
degree $n^{0.1}$. Each such edge is the central edge in $\approx
2n^{10\cdot0.4+10\cdot0.1}=2n^5$ labelled copies of $H$, while the $O(n)$ other edges
are central edges in at most $O(n^2)$ copies of $H$ each. Hence,
$\ZZ_{H}=\bigpar{2+o(1)}n^5+O\bigpar{n^3}$. Furthermore, it is easy to see
(\eg{} by the method of moments) that $X_n\dto\Po(1/2)$.
Hence,
\begin{align}
  \ZZ_H/\bigpar{2n^5}\dto \Po\bigpar{1/2}.
\end{align}
Thus, after suitable norming, $\ZZ_H$ and $\uZZ_H$ are asymptotically
Poisson distributed and not normal. 
\end{example}

\subsection{Giant component and susceptibility}\label{Sgiant}
 
It is well-known
that in the supercritical case when \ref{Asuper} holds,
there is \whp{} a unique component $\cC_1$ of order $\Theta(n)$ in $\ggndd$ or
$\gndd$, known as the giant component;
see  \citet{MolloyReed95,MolloyReed98} and, e.g.,
\cite{SJ204}, 
\cite{BollobasRiordan-old},
\cite{JossEtAl};
moreover, $\E|\cC_1|/n\to 1-f(\zeta)$, which we recall from \refSS{SGW}
is the survival probability of the branching process $\cT$.
\citet{BallNeal} 
proved (under some extra technical conditions, in
particular the existence of a third moment $\E D^3<\infty$)
that the variance  
satisfies $\Var|\cC_1|/n\to \gss$, with $\gss$ given by \eqref{tgvar} below.
Moreover, 
\citet{BarbourR} proved  (under the same techical conditions)
that for the random multigraph $\ggndd$, the size $|\cC_1|$ is
asymptotically normal. We reprove this here with our methods (removing 
some unnecessary conditions); moreover, 
we show that  the result holds for the simple random graph $\gndd$ too.
(Similar results for \gnp{} and \gnm{} have been known for a
long time, see \eg{}
\cite{Stepanov},
\cite{Pittel}, 
\cite{PittelWormald},
\cite{BollobasRiordan-asn}.)

\begin{remark}
  In the weakly supercritical case, where $\E D_n(D_n-2)\to0$ but
 $\E D_n(D_n-2)\gg n^{-1/3}$,
\citet{Riordan-phase} showed asymptotic normality
of the size (and nullity) of the giant component, both for
$\ggndd$ and $\gndd$ (assuming that the degrees are bounded),
using  methods different from ours.
Note that in this case, the giant is smaller: $\E |\cC_1|\ll n$, but $\Var |\cC_1|
\gg n$. 
It does not seem to be possible to prove these results by the method in the
present paper.
\end{remark}

\begin{theorem}[Partly \cite{BallNeal} and \cite{BarbourR}]
  \label{TG}
  \begin{thmenumerate}    
  \item\label{TG*}
Assume \refAAsuperp. Then the size $|\cC_1|$ of the giant component in $\ggndd$
has an
asymptotically normal distribution:
\begin{align}\label{tg}
  \frac{|\cC_1|-\E |\cC_1|}{\sqrt n}\dto N(0,\gss),
\end{align}
where $\gss>0$ is given by, with $f$ and $\zeta$ as in \eqref{f}--\eqref{eb},
\begin{align}\label{tgvar}
  \gss&
  =f(\zeta)+\frac{\mu^2\zeta^2}{\mu-f''(\zeta)}
  +2 \frac{\mu^3\zeta^4}{(\mu-f''(\zeta))^2}
  -f(\zeta^2)
  -2\frac{\mu\zeta^2}{\mu-f''(\zeta)}f'(\zeta^{2})
  \notag\\&\hskip2em
  -\frac{\mu^2}{(\mu-f''(\zeta))^2}
    \bigpar{\zeta^4f''(\zeta^{2})+\zeta^2f'(\zeta^{2})}.
\end{align}
Furthermore,
$\Var|\cC_1|/n\to\gss$.

\item   \label{TG2}
Assume \refAAAsuperp.
Then the results of \ref{TG*} hold for $\gndd$ too, with the same $\gss$.
\end{thmenumerate}
\end{theorem}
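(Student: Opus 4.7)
The plan is to express $|\cC_1|$ through small-component counts and then apply \refT{T1}. Under \refAAsuper, whp $\ggndd$ has a unique giant $\cC_1$ of order $\Theta(n)$ while every other component has size $\le\eps n$ for any fixed $\eps>0$; on this event,
\begin{align*}
 n - |\cC_1| = \sum_{H:\,v(H)\le\eps n} v(H)\,\ubZZ_H,
\end{align*}
the sum running over isomorphism classes of finite connected multigraphs $H$. For a fixed cutoff $K$, split this as $n-|\cC_1| = S_K + R_K$, where $S_K:=\sum_{v(H)\le K} v(H)\,\ubZZ_H$ is the bounded-size contribution and $R_K$ collects the rest.

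First, \refT{T1} applied to the finitely many $H$ with $v(H)\le K$ gives, after centering,
\begin{align*}
 (S_K-\E S_K)/\sqrt n \dto N(0,\gs_K^2),
\end{align*}
where $\gs_K^2$ is computed from \eqref{gshh}: tree contributions dominate, unicyclic terms are $O_P(1)=o_P(\sqrt n)$, and higher-cycle contributions vanish whp. The moment-convergence clause of \refT{T1} further yields $\Var S_K/n \to \gs_K^2$. Using $\gl_T = p_T/v(T)$ from \eqref{gl=p} and standard branching-process identities for the \GW{} tree $\cT$ (in particular $\P(|\cT|<\infty)=f(\zeta)$, and the fact that the conditioned process $\cT\mid|\cT|<\infty$ is a subcritical \GW{} tree whose size moments are computable from $f$, $f'$, $f''$ evaluated at $\zeta$ and $\zeta^2$), a direct calculation shows $\gs_K^2 \to \gss$ as $K\to\infty$, with $\gss$ as in \eqref{tgvar}.

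The hard part is bounding the remainder: one must show $\limsup_{n\to\infty}\Var R_K/n \to 0$ as $K\to\infty$, so that $(R_K-\E R_K)/\sqrt n \to 0$ uniformly. Combined with the identity $|\cC_1|=n-S_K-R_K$ on the high-probability unique-giant event, this yields both the distributional limit \eqref{tg} and the variance asymptotic $\Var|\cC_1|/n\to\gss$ via a standard diagonal argument, letting first $n\to\infty$ and then $K\to\infty$. Bounding $\Var R_K$ amounts to controlling fluctuations in the total size of components of intermediate order $K<v(C)\le\eps n$; the natural route is to extend the weak-dependence cumulant estimates of \refS{SX} to sums over subgraphs of size up to $\eps n$, exploiting that $\sum_{T:\,v(T)>K} v(T)\,p_T \to 0$ as $K\to\infty$ (since the dual subcritical \GW{} tree has finite mean size).

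For part \ref{TG2}, the plan is to upgrade the joint convergence in \refT{T1} to include $|\cC_1|$ together with the loop count $\uZZ_{\sC_1}$ and double-edge count $\uZZ_{\sC_2}$. The latter two are asymptotically independent Poissons (as in \refE{Esimple}), and by the same truncation/cumulant argument they are asymptotically independent of the normal limit for $|\cC_1|$, since loops and double edges are local features with only an $o_P(\sqrt n)$ effect on the giant's size. As $\gndd$ arises from $\ggndd$ by conditioning on $\uZZ_{\sC_1}=\uZZ_{\sC_2}=0$ (an event of probability bounded below by \eqref{pg}), both the normal distributional limit and the variance $\gss$ are preserved under the conditioning; \refT{T2} applied componentwise (via \eqref{unorm} summed over trees up to the truncation level) shows that $\E|\cC_1|$ shifts only by $o(\sqrt n)$ between $\ggndd$ and $\gndd$, completing \ref{TG2}.
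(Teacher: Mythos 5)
Your overall strategy — truncate at a fixed size $K$, apply \refT{T1} to the bounded part, and show the remainder has negligible variance — matches the paper's plan (which routes through the more general \refT{TGpsi} with $\psi\equiv1$). But the central technical step, bounding $\Var R_K/n$ uniformly in $n$, is handled very differently, and your suggested route would not obviously work.

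You propose to ``extend the weak-dependence cumulant estimates of \refS{SX} to sums over subgraphs of size up to $\eps n$.'' The paper deliberately does \emph{not} do this. The cumulant machinery of \refL{L1}--\refL{LL2} is built for a fixed, finite collection of small graphs $H$, and the constants $C$ there depend on $r$, $\ell_1,\dots,\ell_r$ and hence on the sizes $e(H_i)$; there is no uniformity over $H$ with $v(H)$ growing, and the number of isomorphism types of $H$ with $v(H)\le \eps n$ is super-exponential. Instead, the paper bounds the remainder's variance by an elementary coupling argument: \refL{LG2} constructs two exploration processes from independent random starting vertices, couples them, and shows that the probability the second exploration (up to $K$ edges) conflicts with the first is $O(K^2/n)$, giving $\Var\Psi(\ggndd)\le cMK^2\,\E|\psi(\cC(V))|\,n$. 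Combined with the exponential tail $\P(e(\cC(V))=\ell)\le Ce^{-c\ell}$ for $\ell\lepsn$ from \refL{LG3} (a Chernoff bound on a dominated random walk in the exploration, using supercriticality \ref{Asuper}), this yields $\Var\Psi_\ell(\ggndd)\le Ce^{-c\ell}n$ for each level $\ell$, and summing over $L<\ell\lepsn$ gives the required uniform $Ce^{-cL}$ bound. The cut at $\eps n$ is handled by the large-deviation estimate from \cite{BollobasRiordan-old}. Your remark that $\sum_{v(T)>K}v(T)p_T\to0$ controls the mean is correct but says nothing about the variance; the coupling idea is the missing ingredient.

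Two further gaps: you assert that ``a direct calculation shows $\gs_K^2\to\gss$'' but this is a genuine piece of work (the paper's \refL{LGvar} carries out the branching-process computation carefully, including the formula $\EE n_k(\cT)$ via the conditioned subcritical tree $\cT_2$), and you never show $\gss>0$, which the paper proves separately in \refL{LG00} via the bilinear-form/Cauchy--Schwarz argument of \refL{LG0}. For part \ref{TG2}, your conditioning idea is in the right spirit, but the paper's route is cleaner: once \ref{AD2} is assumed, \eqref{pg} holds, so all the mean/variance estimates from the multigraph case transfer automatically by conditioning, and only the convergence in distribution requires \refT{T2} in place of \refT{T1}.
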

The formula \eqref{tgvar} is given (in an equivalent form) by \citet{BallNeal}.
They also state that $\gss>0$, but the proof seems omitted; we give a proof
in \refS{Snull} under our (weaker) assumptions.

The proof of \refT{TG} is given in \refSs{Strunc}--\ref{Svar2};
it is based on the results above and a truncation argument; as in
\cite{BallNeal} and \cite{BarbourR}, we obtain results for the giant by
subtracting all small components, and the main ideas in this part of the
proof are similar.
As part of the proof, we show a more general result in \refT{TGpsi}; we
first introduce more notation.

A \emph{graph functional} $\psi$ is a real-valued functional $\psi(H)$
defined for unlabelled multigraphs $H$.
Let $\psi$ be a graph functional, and define for a multigraph $G$
\begin{align}
\label{Psi}
\Psi(G):=\sum_{v\in G} \psi(\cC(v))
=\sum_{j=1}^{\kkk(G)} |\cC_j|\psi(\cC_j)
=\sum_H|H|\psi(H)\ubZ_H(G),
\end{align}
summing over all unlabelled connected $H$;
the middle equality holds because each component $\cC\cong H$ is counted
$|\cC|=|H|$ times in the sum over $v$.

In the supercritical case, the second sum in \eqref{Psi} may be dominated by the
single term coming from the giant component, and it is sometimes more
interesting to exclude it and consider only small components.
We define 
\begin{align}
\label{Psix}
\Psix(G):=
\sum_{j=2}^{\kkk(G)} |\cC_j|\psi(\cC_j).
\end{align}
Recall also the notation $\EE$ defined in \eqref{EE}.

\begin{theorem}\label{TGpsi}
  \begin{thmenumerate}
    
  \item \label{TGpsi*}
Assume \refAAsuper.
  Let $\psi$ be a graph functional 
such that 
\begin{align}\label{psibound}
  \psi(H)=O\bigpar{e(H)^m+1}
\end{align}
for some constant $m$ and all connected $H$.
Define $\Psix$ by
  \eqref{Psix}.
Then
\begin{align}\label{tgpsi1}
  \E \Psix\bigpar{\ggndd}& = n\EE \psi(\cT)+o(n),
                           \\
  \Var \Psix\bigpar{\ggndd} &= n\gss_\psi+o(n),
  \label{tgpsi2}
\end{align}
and
\begin{align}\label{tgpsi}
  \frac{\Psix\bigpar{\ggndd} - \E \Psix\bigpar{\ggndd}}{\sqrt n} \dto
  N(0,\gss_\psi), 
\end{align}
where,
summing over unlabelled, unrooted trees $T_1,T_2$,
\begin{align}\label{gsspsi}
  \gss_\psi&:=\sum_{T_1,T_2}|T_1||T_2|\psi(T_1)\psi(T_2)\gs_{T_1,T_2}
\notag\\&\phantom:
=\EE\bigpar{|\cT|\psi(\cT)^2}
+\frac{2}{\mu}\bigpar{\EE \xpar{e(\cT)\psi(\cT)}}^2
-\sum_{k\ge0}\frac{1}{p_k}\bigpar{\EE\xpar{n_k(\cT)\psi(\cT)}}^2
\end{align}
with all sums and expectations absolutely convergent.
  \item \label{TGpsi0}
    Assume \refAAAsuper.
    Then the results in \ref{TGpsi*} hold for $\gndd$ too.
  \end{thmenumerate}
\end{theorem}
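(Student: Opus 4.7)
The plan is a truncation argument. For $L\ge1$ set
\[
\Psi_L := \sum_{\substack{H\text{ connected}\\|H|\le L}}|H|\,\psi(H)\,\ubZZ_H,
\]
which is \whp{} a finite sum, since \refT{T1}\ref{T1v} gives $\ubZZ_H=0$ \whp{} for every multicyclic $H$. Under \ref{Asuper}, Molloy--Reed forces $|\cC_1(\ggndd)|=\Theta(n)$ \whp, so $|\cC_1|>L$ \whp{} and on that event $\Psi_L$ equals the part of $\Psix(\ggndd)$ coming from components of size $\le L$; the remainder $\Psix-\Psi_L$ is the contribution from intermediate (non-giant) components of size $>L$. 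The strategy is to derive a CLT for $\Psi_L$ from \refT{T1} for each fixed $L$, identify the limit variance $\gss_\psi^{(L)}$, and then pass $L\to\infty$ via a tail bound on $\Psix-\Psi_L$.

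For fixed $L$, \refT{T1} applied jointly to the finitely many trees and unicyclic $H$ with $|H|\le L$ gives $(\Psi_L-\E\Psi_L)/\sqrt n\dto N(0,\gss_\psi^{(L)})$: the unicyclic contributions are Poisson-distributed, hence $O_p(1)$, and vanish after division by $\sqrt n$, leaving
\[
\gss_\psi^{(L)} = \sum_{\substack{T_1,T_2\text{ trees}\\|T_i|\le L}}|T_1||T_2|\psi(T_1)\psi(T_2)\,\gs_{T_1,T_2}.
\]
Using \eqref{gshh}, the identity $|T|\gl_T=p_T$ from \eqref{gl=p}, and the definition \eqref{EE} of $\EE$, a direct expansion rewrites this as the restriction of the right-hand side of \eqref{gsspsi} to $|\cT|\le L$; similarly \eqref{glh}--\eqref{gl=p} give $\E\Psi_L/n\to\EE[\psi(\cT);\,|\cT|\le L]$. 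Under \ref{Asuper}, $\cT$ conditioned to die out is dual-subcritical with offspring p.g.f.\ $f(\zeta z)/\zeta$, and $\zeta<1$ forces all its moments to be finite, so $\EE|\cT|^r<\infty$ for every $r$; combined with \eqref{psibound} and $e(T)\le|T|$ for trees, every sum in \eqref{gsspsi} converges absolutely and $\gss_\psi^{(L)}\to\gss_\psi$, $\EE[\psi(\cT);|\cT|\le L]\to\EE\psi(\cT)$.

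The main obstacle is the uniform tail bound
\[
\limsup_{L\to\infty}\limsup_{\ntoo}\frac{\Var(\Psix-\Psi_L)}{n}=0,
\]
together with $|\E\Psix-\E\Psi_L|=o(\sqrt n)$. Expanding $\Var(\Psix-\Psi_L)$ as a double sum of covariances $\Cov(\ubZZ_{H_1},\ubZZ_{H_2})$ over connected $H_i$ of size strictly between $L$ and the giant, \refT{T1} and \eqref{cov} give $\Cov(\ubZZ_{H_1},\ubZZ_{H_2})=n\gs_{H_1,H_2}(1+o(1))$ for fixed $H_i$; the delicate point is that the $o(1)$ must be controlled \emph{uniformly} over pairs $(H_1,H_2)$ whose $\psi$-weights are summable, which is precisely what the cumulant estimate of \refS{SX}, the engine behind \refT{T1}, provides. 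Summing and invoking the absolute convergence of \eqref{gsspsi} yields $\limsup_n\Var(\Psix-\Psi_L)/n\le\gss_\psi-\gss_\psi^{(L)}+o_L(1)\to0$. The mean bound is similar, using \eqref{glh} uniformly. A standard truncation-tightness approximation argument then combines the CLT for $\Psi_L$ with this control into \eqref{tgpsi1}--\eqref{tgpsi}.

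For part \ref{TGpsi0} the same outline works with $\ggndd$ replaced by $\gndd$: \refT{T2} replaces \refT{T1} in the CLT for $\Psi_L$, and under \refAAA{} the bound $\P(\ggndd\text{ simple})\ge c>0$ from \eqref{pg} implies that any conditional second moment is at most $c\qw$ times the unconditional one, so the uniform tail bound transfers directly from $\ggndd$ to $\gndd$ and the same $\gss_\psi$ appears.
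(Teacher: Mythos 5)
Your overall outline (truncate by component size, apply \refT{T1} for fixed $L$, identify the limit variance, then let $L\to\infty$) does match the paper's strategy, and the identification of $\gss_\psi^{(L)}$ via \eqref{gshh}, \eqref{gl=p} and \eqref{EE} is fine. But the central step — the uniform tail bound $\limsup_L\limsup_n \Var(\Psix-\Psi_L)/n=0$ — is where the proposal has a real gap, and the paper does something quite different there.

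You propose to control $\Var(\Psix-\Psi_L)$ by summing $\Cov(\ubZZ_{H_1},\ubZZ_{H_2})$ over large $H_i$ and invoking the cumulant machinery of \refS{SX} for a uniform $o(1)$ in \eqref{cov}. This does not work: the constants in \refL{L1} explicitly ``may depend on $r$ and $\ell_1,\dots,\ell_r$'', and in \refLs{LL1} and \ref{LL2} the constants depend on the (fixed, small) graphs $H_i$ and grow without control as $|H_i|\to\infty$. Nothing in \refS{SX} gives any quantitative decay in $|H|$, so the double sum over all $H_1,H_2$ with $|H_i|>L$ cannot be bounded by the machinery you invoke. The paper bypasses this entirely: it truncates by \emph{edge count} $e(\cC)$, proves an exponential tail bound $\P(e(\cC(V))=\ell)\le Ce^{-c\ell}$ for $\ell\le\eps n$ via a branching-process comparison (\refL{LG3}), and then bounds $\Var\Psi_\ell(\ggndd)\le c M K^2\,\E|\psi_\ell(\cC(V))|\,n$ by a direct \emph{coupling} argument for finite-support functionals (\refL{LG2}), not by summing covariances. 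The product of these two bounds gives the crucial geometric decay $\Var\Psi_\ell = O(e^{-c\ell}n)$, which is what actually drives $L\to\infty$. Without something like \refLs{LG2}--\ref{LG3} — or an explicit $H$-dependence in the cumulant constants that you have not established — your tail estimate is not justified.

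A secondary issue: you dismiss the absolute convergence of $\sum_k p_k^{-1}(\EE(n_k(\cT)\psi(\cT)))^2$ by noting that $\EE|\cT|^r<\infty$. That controls the first two terms in \eqref{gsspsi}, but the third has the factor $1/p_k$, which can blow up even when $n_k(\cT)$ is eventually zero with high probability; some cancellation from the implicit factor $\sim p_k$ in $\EE(n_k(\cT)\psi(\cT))$ is needed. The paper handles this by observing that the quadratic form associated to \eqref{gsspsi} is positive semidefinite (as a limit of variances) and applying monotone convergence to the truncated $\psi_{\le L}$, which is cleaner and actually proves the needed bound. Your argument leaves this uncontrolled.

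For part (ii) your remark is fine as far as it goes, but note that the paper does not transfer the variance bound via $\P(\text{simple})\ge c>0$ alone; it re-runs the argument with \refT{T2} and the conditional moment bounds already established there, using that \eqref{pg} holds under \ref{AD2}. The idea is similar, but the short sentence you give should be checked against the need for moment (not just probability) bounds.
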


\begin{remark}
  If $p_k=0$ for some $k$, then $\cT$ cannot contain any vertices of degree
  $k$, so $n_k(\cT)=0$. Hence the terms in the last sum in \eqref{gsspsi}
  with $p_k=0$ vanish and are not a problem
  (recall that we interpret $0/0$  as 0);
  we could write the sum as a sum   over \set{k:p_k>0}. 
  The same applies to other sums below with $p_k$ in the denominator.
\end{remark}

\begin{remark}
  \label{RRAp1}
  We do not assume \ref{Ap1} in \refT{TGpsi}, since the results hold
  also when \ref{Ap1} fails (so $p_1=0$ by \ref{Asuper}),
  but this case is less interesting
  since then $\gss_\psi=0$, because
      $|\cT|=\infty$ \as{} and thus $\EE=0$.
\end{remark}

\begin{example}\label{Egiant}
  Let $\psi(H):=1$. Then \eqref{Psix} yields
  \begin{align}
  \Psix(G)=\sum_{j=2}^{\kkk(G)}|\cC_j|=|G|-|\cC_1|.  
  \end{align}
  Hence the asymptotic normality of $\cC_1$ in \eqref{tg}
  is equivalent to the asymptotic
  normality of $\Psix(\ggndd)$ and $\Psix(\gndd)$, which follows from
  \refT{TGpsi}, with $\gss=\gss_\psi$.
  We will show in \refL{LGvar} that this yields \eqref{tgvar},
and  in \refL{LG00} that $\gss>0$, assuming \ref{Asuper} and \ref{Ap1}.

Similarly, we may take $\psi_1(H):=e(H)/|H|$, so that \eqref{Psix} yields,
for $G=\ggndd$ or $\gndd$,
  \begin{align}
  \Psix_1(G)=\sum_{j=2}^{\kkk(G)}e(\cC_j)=N/2-e(\cC_1).
  \end{align}
\refT{TGpsi} thus shows asymptotic normality of $e(\cC_1)$; more generally,
taking a linear combinatíon $a\psi(H)+b\psi_1(H)=a+b e(H)/|H|$,
we obtain joint asymptotic normality of $|\cC_1|$ and $e(\cC_1)$.
(Cf.\ \cite{Riordan-phase} for the weakly supercritical case not studied here.)
We conjecture that the limit distribution has a non-singular covariance
matrix; however, we have not verified this; see \refR{RG0}.
\end{example}

\begin{example}\label{Esusc}
  Let $\psi(H)=|H|$; then \eqref{Psix} yields
    \begin{align}
 n\qw \Psix(G)=n\qw\sum_{j=2}^{\kkk(G)}|\cC_j|^2,
  \end{align}
which is the modified susceptibility $\hchi(G)$ studied in 
\cite{SJ241}.
It was shown there that, assuming \refAAAsuperp, $\hchi(\gndd)$ converges in
probability to  $\EE|\cT|$. \refT{TGpsi} yields convergence of
$\E\hchi(\gndd)$ to the same limit and, moreover, asymptotic normality
\begin{align}
n\qq  \bigpar{\hchi(\gndd)-\E\hchi(\gndd)}\dto N\bigpar{0,\gss},
\end{align}
for some $\gss$ given by \eqref{gsspsi}. This $\gss$ could be evaluated
explicitly 
similarly to \refL{LGvar}, but we have not done so. We show in \refE{Esusc0}
that $\gss>0$.
\end{example}

\begin{remark}
  We assume \ref{Asuper} (supercriticality) in \refTs{TG} and \ref{TGpsi}.
  This is for the truncation argument allowing us to ignore large components.
  (For example \refL{LG3}.)

  In the critical case $\E D(D-2)=0$,
  there are typically a few large components of order $\Theta(n^{2/3})$, see
  \cite[Theorem 1.3]{Riordan-phase}, and \refT{TG} does not hold; thus
  \refT{TGpsi} does not hold.

  In the subcritical case, \refT{TG} is not interesting, but other
  functionals are, for example the susceptibility in \refE{Esusc}.
  We conjecture that \refT{TGpsi} holds under suitable
  conditions, now for $\Psi$ rather than $\Psix$ (the difference should be
  negligible), but that stronger conditions than above
  (on the degree sequences or on the graph functional, or both)
are required to keep the contribution from large components small.
  Perhaps \ref{Am} will do; alternatively, \eqref{psibound} may be replaced
  by a stronger assumption.
  The paper is long as it is, and we do not consider this case further.
\end{remark}

\section{The configuration model}\label{Sconfig}
The standard way to constuct a random multigraph $\ggndd$
is by the \emph{configuration model},
which was introduced by \citet{Bollobas-config}.
(See \cite{BenderCanfield,Wormald81} for related models and arguments.)
As is well-known, we then assign a set of $d_i$ \emph{half-edges} to
each vertex $v_i$; this gives a total of $N$ half-edges, and we choose a
perfect matching of them uniformly at random. This defines $\ggndd$ by
regarding each pair of half-edges in the
matching  as an edge.

We will use this standard version in \refS{Smean}, but in the main part of our
arguments (\refS{SY}), 
it will be convenient to use a variation of this construction that
yields the same result in a somewhat more circuitous way.
To see it, we may start with the standard construction above, 
but also assume that we label the edges by putting a 
\emph{cuff} on each edge, with the cuffs labelled $1,\dots,N/2$ (uniformly
at random).
Furthermore, each cuff $i$ has two half-edges, labelled $2i-1$ and $2i$
(randomly), 
joined  to one each of the half-edges making the edge. 
(The half-edges are now really quarter-edges, but we keep the name.)

Our version of the configuration model can now be described as follows.
Let $\gU_i=\set{\go _{i1},\dots,\go_{id_i}}$ be the set of half-edges
assigned to vertex $v_i$ in the standard model above.
Let $\gU:=\bigcup_i\gU_i$ be set of all half-edges and label them
  (arbitrarily) as $\ga_1,\dots,\ga_N$.
Take also a second set of half-edges $\gb_1,\dots,\gb_N$ representing the
half-edges at the cuffs, with $\gb_{2j-1}$ and $\gb_{2j}$ at cuff $j$, which
we now for emphasis denote $\chi_j$.
Let $\pi\in\fS_N$ be a uniformly random permutation, and join each
$\ga_i$ to $\gb_{\pi(i)}$.
We denote the result by 
$\hgndd$. We interpret $\hgndd$ as a bipartite graph
by merging all half-edges in $\gU_i$ into the vertex $v_i$, and the 
half-edges  $\gb_{2j-1}$ and $\gb_{2j}$ into the cuff $\chi_j$;
thus the two vertex sets are $\set{v_i:i\in[n]}$ and $\set{\chi_j:j\in[N/2]}$.

Finally, to obtain $\ggndd$,
we  merge the two edges at each cuff into one, and forget the cuffs.
This yields evidently the same result as the standard configuration model.

The original vertices will sometimes be called \emph{real} vertices.

\begin{remark}
  The reader may recognize that our construction is just the standard 
configuration model construction of a random bipartite graph with the
real vertices on one side and the $N/2$
cuffs (each of degree 2) on the other side, followed by contractions
eliminating all cuffs.
\end{remark}

\section{Cumulants}\label{Scum}

Our proofs are based on the method of moments, in the form
using cumulants, see \eg{} \cite[Section 6.1]{JLR}.
We denote the $r$-th cumulant of a random variable $X$ by $\kk_r(X)$, and
the mixed cumulant of random variables $X_1,\dots,X_r$ by
$\kk(X_1,\dots,X_r)$.
(The variables are assumed to have finite moments.)
We recall the following properties of mixed cumulants, see \eg{}
\cite[p.~147]{JLR} or \cite{LS59}.

\begin{PXenumerate}{$\kk$}
\item\label{KK=} $
  \kk_r(X)=\kk(X,\dots,X)$ ($r$ times).
\item \label{KKmulti}
$\kk(X_1,\dots,X_r)$  is multilinear in $X_1,\dots,X_r$.
\item \label{KK0}
$\kk(X_1,\dots,X_r)=0$ if $\set{X_1,\dots,X_r}$ can be partitioned into
two non-empty sets of random variables that are independent of each other.

\item\label{KKsum}
$\kk(X_1,\dots,X_r)
 =\sum_{I_1,\dots,I_q} (-1)^{q-1}(q-1)!\prod_{p=1}^q\E\prod_{j\in I_p}X_j$,
summing over all partitions of $\set{1,\dots,r}$ into non-empty sets
$\set{I_1,\dots,I_q}$, $q\ge1$.
\item\label{KKsum2}
$\E(X_1\dotsm X_r)
 =\sum_{I_1,\dots,I_q} \prod_{p=1}^q\kk(\set{X_i:i\in I_p})$,
summing as in \ref{KKsum}.
\end{PXenumerate}

\section{The main lemma}\label{SX}
In this section we state and prove \refL{L1} below, whch is
the central part of the proof of the results in the present paper.
Although the lemma is motivated by its application to the configuration
model and random graphs, we state it in a self-contained way.
The lemma could be derived using the general theory by \cite{Feray}, and is
very similar to \cite[Theorem 1.4]{Feray-Ewens}, but we give a complete
proof, using the following notation.

Suppose that $r\ge1$ and that for each $i\in[r]$ we are given $\ell_i\ge1$ and 
two sequences
$\ga_{i1},\dots,\ga_{i \ell_i}\in[N]$ and 
$\gb_{i1},\dots,\gb_{i \ell_i}\in[N]$. 
(These are fixed throughout this section.)

Let $\pi\in\fS_{N}$ be uniformly random and define the random indicator
variables 
\begin{align}\let\ells=j
  \label{Yi}
Y_i:=\bigindic{\pi(\ga_{i\ells})=\gb_{i\ells},\forall \ells\in[\ell_i]}
=\prod_{\ells=1}^{\ell_i}\bigindic{\pi(\ga_{i\ells})=\gb_{i\ells}}.
\end{align}
Our goal in this section is to estimate the mixed cumulant
$\kk(Y_1,\dots,Y_r)$.

Let $\GA_i:=\set{\ga_{ij}: j\in[ \ell_i]}$ and
$\GB_i:=\set{\gb_{ij}: j\in [\ell_i]}$.
Let $\gG$ be the graph with vertex set $[r]$ and an edge $ik$ if 
$\GA_i\cap \GA_k\neq\emptyset$  or $\GB_i\cap \GB_k\neq\emptyset$.
(In other words, there is an edge $ik$ when $Y_i$ and $Y_k$ use a common
$\ga$ or a common $\gb$ in the definition \eqref{Yi}.)
The connected components of $\gG$ are called \emph{blocks}. Let $b$ be
the number of blocks, and denote the blocks by $\gG_1,\dots,\gG_b$ 
(in some order, \eg{}  lexicographic); 
thus $1\le b\le r$ and
$\gG_1,\dots,\gG_b$ form a partition of $[r]$.
Furthermore, let
\begin{align}\label{mue}
\mue:=\Bigabs{\bigcup_{i\in [r]}\set{(\ga_{ij},\gb_{ij}): j\in[\ell_i]}},
\end{align}
\ie, the number of distinct pairs $(\ga_{ij},\gb_{ij})$.

\begin{lemma}\label{L1}
  With notations as above,
  \begin{align}\label{l1}
\bigabs{\kk\bigpar{Y_1,\dots,Y_r)}}\le
C N^{-(b-1)-\mue}.
  \end{align}
where $C$ is a constant that may depend on $r$ and $\ell_1,\dots,\ell_r$ but not
on $N$.
\end{lemma}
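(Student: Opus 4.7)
The plan is to apply the moment-to-cumulant identity \ref{KKsum} and to trace the cancellations forced by the block decomposition of the dependency graph $\gG$.

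\emph{Step 1: Joint moments.} For any $I\subseteq[r]$, the product $\prod_{i\in I}Y_i$ is the indicator of the event $\pi(\ga_{ij})=\gb_{ij}$ for every $(i,j)$ with $i\in I$ and $j\in[\ell_i]$. Set $\cP(I):=\bigcup_{i\in I}\set{(\ga_{ij},\gb_{ij}):j\in[\ell_i]}$ and $e(I):=|\cP(I)|$. This event is empty unless $\cP(I)$ is a partial matching (each $\ga$ and each $\gb$ occurs in at most one pair), and when consistent, a uniform $\pi\in\fS_N$ satisfies the $e(I)$ prescribed values with probability $(N-e(I))!/N!=1/(N)_{e(I)}$. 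By the very definition of $\gG$, distinct blocks use disjoint $\ga$- and $\gb$-sets, so we have the key factorization $e(I)=\sum_{s=1}^b e(I\cap\gG_s)$, and joint consistency is equivalent to consistency of every restriction $I\cap\gG_s$.

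\emph{Step 2: Cumulant expansion.} By \ref{KKsum},
\begin{equation*}
\kk(Y_1,\dots,Y_r)=\sum_{\pi}(-1)^{q-1}(q-1)!\prod_{p=1}^q\frac{1}{(N)_{e(I_p)}},
\end{equation*}
summed over all partitions $\pi=\set{I_1,\dots,I_q}$ of $[r]$, with the convention that a factor vanishes when $I_p$ is inconsistent. Expanding $(N)_k\qw=N^{-k}\bigpar{1+\binom{k}{2}N\qw+O(N\qww)}$ converts this into a formal Laurent series $\kk=\sum_{j\ge0}A_j N^{-\mue-j}$, and the lemma reduces to showing $A_0=\dots=A_{b-2}=0$ with the remainder uniformly controlled.

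\emph{Step 3: Leading-order cancellation.} The coefficient $A_0$ receives contributions only from those $\pi$ for which $\sum_p e(I_p)$ attains its minimum value $\mue$; by Step~1 this forces every block $\gG_s$ to lie inside a single part of $\pi$, so such $\pi$ are in bijection with partitions $\rho$ of $[b]$. Hence $A_0=\sum_{\rho}(-1)^{|\rho|-1}(|\rho|-1)!$, which is the expression given by \ref{KKsum} for $\kk_b(1)$, and therefore vanishes for $b\ge2$ since $\kk_b(c)=0$ for constants $c$. The higher coefficients $A_1,\dots,A_{b-2}$ should cancel by the same mechanism: after regrouping partitions $\pi$ that disagree with the block partition to the same ``degree,'' each $A_j$ collapses into a Stirling-type alternating sum that again is an instance of a constant's vanishing cumulant.

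\emph{The main obstacle} is making the higher-order cancellations watertight; a slip at any order $j\le b-2$ would destroy the bound. The natural route is a coupling/induction on $b$. One compares each block expectation $E_I:=1/(N)_{e(I)}$ with its factorized surrogate $\tE_I:=\prod_{s=1}^b 1/(N)_{e(I\cap\gG_s)}$, which corresponds to replacing $\pi$ by independent permutations $\pi\xx{s}$ on each block; the cumulant built from $\tE$ vanishes by \ref{KK0} since the $Y_i$ then split into $b$ genuinely independent groups. The difference $E_I-\tE_I$ telescopes into cross-block corrections each of order $N\qw$, and the delicate part is proving that the alternating signs in \ref{KKsum} force at least $b-1$ such factors to accumulate in every surviving contribution, not merely one. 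This parallels the cumulant estimates of \citet{Feray-Ewens,Feray}, whose general framework in fact covers the present setting; writing out the combinatorial bookkeeping -- with constants depending on $r$ and the $\ell_i$ but not on $N$ -- is the technical heart of the proof.
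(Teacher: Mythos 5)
You have correctly identified the two essential structural facts: the factorization $e(I)=\sum_s e(I\cap\gG_s)$ across blocks, and the vanishing of the leading coefficient $A_0$ for $b\ge2$ via the identity $\sum_\rho(-1)^{|\rho|-1}(|\rho|-1)!=\kappa_b(\text{const})=0$. But the proposal stops exactly where the real work begins: the claim that "the higher coefficients $A_1,\dots,A_{b-2}$ should cancel by the same mechanism" is asserted, not proved, and the "Stirling-type alternating sum" you gesture at does not obviously materialize. The difficulty is that once you expand $(N)_k^{-1}=N^{-k}(1+\binom{k}{2}N^{-1}+\dots)$, the correction coefficients $\binom{e(I_p)}{2}$ depend on which partition part $I_p$ a given block lands in, so the order-$j$ coefficient $A_j$ is \emph{not} simply a cumulant of a constant; it mixes partition combinatorics with the block sizes in a way that your bijection with partitions of $[b]$ no longer controls. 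You recognize this yourself ("the delicate part..."), and then defer to Féray's framework rather than closing the gap.

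The paper's proof does follow the coupling idea you sketch in your final paragraph, but with a concrete construction you would need to supply. It builds $\pi$ from $b$ independent permutations (one per block) with a collision-avoidance rule, defines modified indicators $Y_{i;J}$ that only see the blocks in $J$, and passes to Möbius differences $Y_{i;\Delta J}=\sum_{I\subseteq J}(-1)^{|J\setminus I|}Y_{i;I}$. Multilinearity then turns $\kappa(Y_1,\dots,Y_r)$ into a sum of cumulants of these differences. Two facts replace your unproven coefficient cancellations: (i) if the auxiliary graph $\HX$ on $[b]$ recording which $J_i$'s are invoked is disconnected, the cumulant vanishes exactly because the corresponding variables split into genuinely independent groups (this is where your $\tilde E_I$ heuristic becomes rigorous); and (ii) $Y_{i;\Delta J}\neq0$ forces a collision between $\tau^{(k)}$ and $\tau^{(j)}$ for each $j\in J$, and each such collision across a spanning tree of the connected $\HX$ costs a factor $O(N^{-1})$ — this is what produces the $N^{-(b-1)}$ gain, not an order-by-order Laurent cancellation. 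So your instinct in the last paragraph is right, but it is not a minor bookkeeping matter: the spanning-tree/coverage argument is the content of the lemma, and without it the bound is unproved.
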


\begin{remark}\label{Rif}
The estimate $\bigabs{\kk\bigpar{Y_1,\dots,Y_r)}}\le C N^{-\mue}$ is
straightforward; in particular, the case $b=1$ of \eqref{l1} is easy.
  If the indicators $\indic{\pi(\ga)=\gb}$ were independent for disjoint pairs
  $(\ga,\gb)$, then the variables $Y_i$ belonging to different blocks would
  be independent, and thus the mixed cumulant would vanish when $b\ge2$. Of
  course, in our setting these indicators are not independent, but they are
  only weakly dependent and \refL{L1} yields a substitute with an estimate
  that becomes smaller when the number of blocks gets larger.
We will see later that this is sufficient for our purposes.
\end{remark}

\begin{proof}[Proof of \refL{L1}]
In this proof, $C$ will denote various constants that may depend on
$r$ and $\ell_1,\dots,\ell_r$, but not on $N$ or $\ga_{ij},\gb_{ij}$. 
The same holds for the implied constants in $O(\dots)$.

For each set $R\subseteq[r]$, 
let $A(R):=\bigcup_{i\in R} A_i$,
$B(R):=\bigcup_{i\in R} B_i$,
and,  generalizing \eqref{mue},
\begin{align}\label{muR}
\mue(R):=\Bigabs{\bigcup_{i\in R}\set{(\ga_{ij},\gb_{ij}): j\in[\ell_i]}}.
\end{align}
Thus, $\mue:=\mue([r])$.
Note that the sets $A(\gG_1),\dots,A(\gG_b)$ are disjoint.


The idea of the proof
is, hardly surprisingly, 
that the values $\pi(\ga)$ for different $\ga$ are almost independent, 
and thus indicators $\indic{\pi(\ga)=\gb}$ for different $\ga$
are almost independent. However; this is only ``almost'', and therefore we
approximate using truly independent variables, constructed in a careful way.

We begin by constructing the random permutation $\pi$ in a special way.
Define recursively $L_1',\dots,L_b'$ and $L_1,\dots,L_b$ by
\begin{align}\label{Lk}
L_k':=1+\sum_{j=1}^{k-1} L_j, &&&
  L_k:=\bigabs{A(\gG_k)}L_k'
,&& 1\le k\le b.
\end{align}
Assume $N\ge L_b$. (Otherwise \eqref{l1} is trivial if $C$ is large
enough.)

Let $\qpi\xxq1,\dots\qpi\xxq{b}$ be independent uniformly random permutations
in $\fS_N$, and let 
$\tau\xxq k:=\qpi\xxq k(1)\dotsm\qpi\xxq k(L_k)$
be the random string consisting of the first $L_k$
values of $\qpi\xxq k$.
We write the elements of $\tau\xxq k$ as $\tau\xxq k(\ell)$, $\ell\in[L_k]$.
Using a fixed bijection between $[L_k]$ and $A(\gG_k)\times[L_k']$, we
also regard $\tau\xxq k$ as an array $\tau\xxq k(\ga,\ell)$, with  $\ga\in
A(\gG_k)$ and $\ell\in[L_k']$.

For $k\in[b]$ and  a set of indices
$J\subseteq[k-1]$,  
say that an element $\tau\xxq k(\ga,\ell)$ is \emph{bad} if it 
also occurs in $\tau\xxq j$ for some $j\in J$.
(I.e., if  it equals $\tau\xxq j(\ga',\ell')$ for some 
$(\ga',\ell')\in A(\gG_j)\times[L_j']$.) 
Finally, define 
$\pi_J\xxq k(\ga)$ for $\ga\in A(\gG_k)$ 
as $\tau\xxq k(\ga,\ell)$ for the smallest $\ell\in[L_k']$ such that this
element is not bad. Note that we have defined $L_k'$ in \eqref{Lk}
so large that there is always at least one good element for each $\ga$.
(We will not use $\pi_J\xxq k(\ga)$ for $\ga\notin A(\gG_k)$; we may define
these 
arbitrarily to make $\pi_J\xxq k$ a permutation if desired, but we may also
just ignore them.) 

 By construction, for a given $k$ and $J$, 
$\pi_J\xxq k(\ga)$ are distinct for
$\ga\in A(\gG_k)$, and these values are distinct from $\pi_I\xxq j(\ga')$ for all
$j\in J$, $I\subseteq[j-1]$ and $\ga'\in A(\gG_j)$.

In particular, consider the case
$J_k=[k-1]$ for all $k\in[b]$. 
Then all $\pi_{[k-1]}\xxq k(\ga)$, for $k\in[b]$ and $\ga\in A(\gG_k)$, are
distinct, and by symmetry, they equal any sequence of 
$\sum_k|A(\gG_k)|$
distinct values in $[N]$
with the same probability. Hence, they have the same (joint)
distribution as the values
$\pi(\ga)$ for $\ga\in \bigcup_k A(\gG_k)=A([r])$.
(Recall that the sets $A(\gG_1),\dots,A(\gG_b)$ are disjoint.)
Consequently, we may assume that 
\begin{align}
  \label{pi=}
\pi(\ga)=\pi_{[k-1]}\xxq k(\ga),
\qquad \ga\in A(\gG_k),
\, k\in[b].
\end{align} 

For $i\in[r]$, let $k(i)$ be the unique index such that $i\in\gG_{k(i)}$.
Given  $J\subseteq[k(i)-1]$,
define further a modification of $Y_i$ in \eqref{Yi} by
\begin{align}\let\ells=j
  \label{YiJ}
Y_{i;J}
:=
\prod_{\ells=1}^{\ell_i} \bigindic{\pi_{J}\xxq {k(i)}(\ga_{i\ells})=\gb_{i\ells}}.
\end{align}
Note that 
by \eqref{Yi} and \eqref{pi=},
\begin{align}\label{Y=}
  Y_i=Y_{i;[k(i)-1]}.
\end{align}

Define also a random variable denoted $Y_{i;\gD J}$
by
\begin{align}\label{gDJ}
Y_{i;\gD J} = \sum_{I\subseteq J}(-1)^{|J\setminus I|} 
Y_{i;I}.
\end{align}
This can be regarded as a kind of inclusion--exclusion, or
M\"obius inversion, and \eqref{gDJ} implies the inverse relation,
\begin{align}\label{gDJ2}
Y_{i;I} = \sum_{J\subseteq I}Y_{i;\gD J},
\qquad I\subseteq[k(i)-1].
\end{align}

We now conclude from \eqref{Y=}, \eqref{gDJ2} and the multilinearity of
mixed cumulants \ref{KKmulti} that 
\begin{align}
  \kk(Y_1,\dots,Y_r)
&
=\kk\bigpar{Y_{1;[k(1)-1]},\dots,Y_{r;[k(r)-1]}}
\notag\\&
=
\sum_{J_1\subseteq[k(1)-1]}\dotsm \sum_{J_r\subseteq[k(r)-1]}
\kk(Y_{1;\gD J_1},\dots,Y_{r;\gD J_r}).
\label{cumsum}
\end{align}
The number of terms in this sum is $O(1)$, so it suffices to estimate each
term individually.

Thus, fix a sequence $J_1,\dots,J_r$ with $J_i\subseteq[k(i)-1]$.
Let $\HX$ be the 
graph with vertex set $[b]$ and an edge $jk$ (with $j<k$)
if $j\in \bigcup_{i\in \gG_k} J_i$. 

Note first that 
a variable $Y_{i;\gD J_i}$ by \eqref{gDJ}, \eqref{YiJ} and the construction of
$\pi_J\xxq k$, depends only on $\tau\xxq {k(i)}$ and $\tau\xxq j$ for $j\in J_i$.
Consequently,
if $\HX$ is disconnected, so we may divide $\HX$ into two parts
$\HX_1$ and $\HX_2$ with no edges between them, 
then the random variables $\set{Y_{i;\gD J_i}:i\in \HX_\ell}$
depend only on $\tau\xxq k$ for $k\in \HX_\ell$,
$\ell\in\set{1,2}$,
and thus these two sets of variables are independent.
Hence, \ref{KK0} yields the following.
\begin{claim}\label{CL0}
If\/ $\HX$ is disconnected, then
$\kk\bigpar{Y_{1;\gD J_1},\dots,Y_{r;\gD J_r}}=0$.  
\end{claim}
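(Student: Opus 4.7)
The plan is to prove \refClaim{CL0} directly from the dependency structure of the $Y_{i;\gD J_i}$ together with cumulant property \ref{KK0} (the one that makes mixed cumulants vanish on independent blocks), so essentially unpacking what was already noted just above the claim.

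First I would identify, for each $i\in[r]$, the set of underlying random objects that $Y_{i;\gD J_i}$ is a function of. From the definition \eqref{gDJ}, $Y_{i;\gD J_i}$ is a linear combination of $Y_{i;I}$ for $I\subseteq J_i$; and from \eqref{YiJ} together with the construction of $\pi_I\xxq{k(i)}$, each such $Y_{i;I}$ is measurable with respect to $\tau\xxq{k(i)}$ and $\set{\tau\xxq j: j\in I}\subseteq\set{\tau\xxq j: j\in J_i}$. Hence $Y_{i;\gD J_i}$ is measurable with respect to $\sigma\bigpar{\tau\xxq{k(i)},\set{\tau\xxq j: j\in J_i}}$.

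Next I would use how $\HX$ was defined: the vertex set is $[b]$, and an edge $jk(i)$ (with $j<k(i)$) is present precisely when $j\in J_i$ for some $i\in\gG_{k(i)}$. If $\HX$ is disconnected, write $[b]=\HX_1\sqcup\HX_2$ as a splitting into a union of two nonempty subsets with no edges between them. For any $i$ with $k(i)\in\HX_1$, every $j\in J_i$ must satisfy $j\in\HX_1$ (otherwise $jk(i)$ would be a forbidden crossing edge of $\HX$), and similarly for $\HX_2$. Thus the variables $\set{Y_{i;\gD J_i}: k(i)\in\HX_\ell}$ are measurable with respect to $\sigma\bigpar{\tau\xxq k : k\in\HX_\ell}$ for $\ell=1,2$.

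Since the permutations $\qpi\xxq 1,\dots,\qpi\xxq b$ were chosen independently, so are the strings $\tau\xxq 1,\dots,\tau\xxq b$, and therefore the two $\sigma$-algebras above are independent. The set $\set{Y_{1;\gD J_1},\dots,Y_{r;\gD J_r}}$ thus partitions into two nonempty subfamilies that are independent of each other, and \ref{KK0} immediately gives $\kk\bigpar{Y_{1;\gD J_1},\dots,Y_{r;\gD J_r}}=0$. The only non-mechanical step is verifying that the edge-set of $\HX$ was designed to exactly capture the dependency graph on the $Y_{i;\gD J_i}$; once that bookkeeping is in place the argument is a one-line appeal to \ref{KK0}.
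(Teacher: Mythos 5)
Your proof is correct and follows the same route as the paper: identify that $Y_{i;\gD J_i}$ depends only on $\tau^{(k(i))}$ and $\set{\tau^{(j)}: j\in J_i}$, observe that the edge set of $\HX$ encodes exactly this dependency, split the disconnected $\HX$ into two pieces giving independent subfamilies, and apply \ref{KK0}. Your phrasing is in fact slightly more careful than the paper's, which writes $\set{Y_{i;\gD J_i}: i\in\HX_\ell}$ where $k(i)\in\HX_\ell$ is intended.
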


Consequently, it suffices to consider the case when $\HX$ is connected.
For a set $R\subseteq[r]$, let
\begin{align}
\label{YYR}
  \YY_R:=\prod_{i\in R} Y_{i; \gD J_i}.
\end{align}
By \ref{KKsum}, the mixed cumulant $\kk(Y_{1;\gD J_1},\dots,Y_{r;\gD J_r})$ in
\eqref{cumsum} can be written as a linear combination of
products
\begin{align}\label{kom}
  \prod_{p=1}^q \E \YY_{R_p}
\end{align}
where $R_1,\dots,R_q$ is a partition of $[r]$. The number of terms and their
coefficients are $O(1)$, and thus it suffices to estimate each product
\eqref{kom}.

Define for $1\le j< k\le b$, $\ell\in[L_j]$ and $ m\in[L_k]$ the indicator
\begin{align}
  \label{xi}
\xi_{jk\ell m}:=\bigindic{\tau\xxq j(\ell) =\tau\xxq k(m)}
\end{align}
and let
$\Xi:=\set{\xi_{jk\ell m}}$ 
be the array of all these indicators.
Thus $\Xi$ tells us exactly which coincidences there are among the values
$\tau\xxq k(i)$. (Recall that there are no such coincidences with the same $k$.)
This also determines, for each $k$ and $J\subseteq[k-1]$, the indices of the
elements $\tau\xxq k(\ga,\ell)$ that are bad when 
constructing $\pi_J$,
and thus exactly 
which element $\tau\xxq k(\ga,\ell)$ that $\pi_J\xxq k(\ga)$ equals, for each
$\ga\in A(\gG_k)$.
This implies the following. 

\begin{claim}\label{CLXi}
  For any given sequence of sets $I_1,\dots,I_r$, $\Xi$ determines
exactly what coincidences, if any, there are among all 
$\pi\xxq {k(i)}_{I_i}(\ga)$
for $i\in[r]$ and $\ga\in A(\gG_k)$. 

Furthermore, by symmetry, 
conditioned on\/ $\Xi$,
any sequence of non-coin\-ciding values 
$\pi\xxq {k(i)}_{I_i}(\ga)$
has the same distribution as a sequence drawn without replacement from
$[N]$.
\end{claim}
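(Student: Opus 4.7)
The plan is to prove the two assertions of \refClaim{CLXi} separately, exploiting a common underlying observation: each $\pi\xxq{k(i)}_{I_i}(\ga)$ is, by construction, the value $\tau\xxq{k(i)}(\ga,\ell^*)$ at a deterministic ``address'' $(k(i),\ga,\ell^*)$ in the array $(\tau\xxq{1},\dots,\tau\xxq{b})$, where $\ell^*$ depends only on the coincidence information already encoded in $\Xi$.

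For part~(a), I would fix $i\in[r]$ and $\ga\in A(\gG_{k(i)})$ and note that, by the definition of $\pi_{J}\xxq{k}$, the index $\ell^*=\ell^*(i,\ga)$ is the smallest $\ell\in[L'_{k(i)}]$ such that $\tau\xxq{k(i)}(\ga,\ell)$ does not occur in any $\tau\xxq{j}$ with $j\in I_i$. But the statement ``$\tau\xxq{k(i)}(\ga,\ell)$ occurs in $\tau\xxq{j}$'' is, for $j\in I_i\subseteq[k(i)-1]$, a disjunction of entries of $\Xi$ of the form $\xi_{j,k(i),\cdot,\cdot}$ (after identifying the pair $(\ga,\ell)$ with its position in $[L_{k(i)}]$). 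Hence $\ell^*(i,\ga)$, and therefore the address $(k(i),\ga,\ell^*)$, is a function of $\Xi$ alone. A coincidence between $\pi\xxq{k(i)}_{I_i}(\ga)$ and $\pi\xxq{k(i')}_{I_{i'}}(\ga')$ is then either an equality of addresses (when $k(i)=k(i')$, since $\tau\xxq{k}$ is injective as an initial segment of a permutation) or an entry $\xi_{k(i),k(i'),\cdot,\cdot}$ of $\Xi$ (when $k(i)\neq k(i')$). Either way, $\Xi$ fully determines the coincidence pattern among the values $\pi\xxq{k(i)}_{I_i}(\ga)$.

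For part~(b), the key is an $\fS_N$-symmetry: if $\sigma\in\fS_N$ and we replace each $\qpi\xxq{k}$ by $\sigma\circ\qpi\xxq{k}$, then the joint law of the independent uniform permutations $(\qpi\xxq{1},\dots,\qpi\xxq{b})$, and hence of $(\tau\xxq{1},\dots,\tau\xxq{b})$, is preserved. This action also preserves $\Xi$ pointwise, since coincidences of values are invariant under relabeling. Using part~(a), pick a system of representatives for the equivalence classes of addresses $(k(i),\ga,\ell^*)$ under the coincidence relation determined by $\Xi$; these representatives by construction carry pairwise distinct $\tau$-values, and any non-coinciding subsequence of the $\pi\xxq{k(i)}_{I_i}(\ga)$ is described by the values at a subset of representatives. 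By the $\fS_N$-invariance, the conditional joint distribution of the values at the representative addresses, given $\Xi$, is $\fS_N$-invariant on injective tuples; but $\fS_N$ acts transitively on tuples of distinct elements of $[N]$, so this conditional law must be uniform on injections, which is exactly the law of sampling without replacement from $[N]$.

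The only technical nuisance is bookkeeping the fact that $\tau\xxq{k}$ is merely an initial segment rather than a full permutation: the $\fS_N$-action still acts componentwise and still preserves the joint law, which is immediate since an initial segment of $\sigma\circ\qpi\xxq{k}$ equals the componentwise $\sigma$-image of the initial segment of $\qpi\xxq{k}$. Moreover, the definition of $L_k'$ in \eqref{Lk} guarantees that at least one $\ell\in[L'_{k(i)}]$ is good, so $\ell^*(i,\ga)$ is always well-defined. Beyond these routine checks, both assertions of the claim follow directly from the address description plus the symmetry argument; the real work lies not in proving \refClaim{CLXi} but in applying it to estimate the expectations $\E\YY_{R_p}$ appearing in \eqref{kom} and assembling the final bound \eqref{l1}.
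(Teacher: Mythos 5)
Your proof is correct and takes essentially the same approach as the paper: part (a) is the observation, stated just before the claim, that $\Xi$ determines the bad indices and hence the exact address $\tau\xxq{k}(\ga,\ell^*)$ that $\pi_J\xxq{k}(\ga)$ takes, and part (b) is the "by symmetry" remark, which you flesh out (correctly) via the diagonal $\fS_N$-action on the $\qpi\xxq{k}$ preserving both the joint law and $\Xi$.
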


Let $R\subseteq[r]$, and consider
$Z:=\prod_{i\in R}Y_{i;I_i}$. 
This is, recalling \eqref{YiJ}, a product of
factors of the type $\indic{\pi_{I_i}\xxq{k(i)}(\ga_{i\ell})=\gb_{i\ell}}$.
There may be some pairs
$(\ga_{i\ell},\gb_{i\ell})$ that are repeated;
we let $\tZ$ be the product after
deleting all factors such that $(\ga_{i\ell},\gb_{i\ell})$ repeats
a previous pair. 
Note that the number of factors remaining in $\tZ$ is $\mue(R)$ by \eqref{muR}.
Suppose that
\begin{align}\label{ga=}
\pi\xxq {k(i)}_{I_i}(\ga_{i\ell})=\pi\xxq {k(j)}_{I_j}(\ga_{jm})  
\end{align}
for two of the remaining factors
$  \indic{\pi_{I_i}\xxq{k(i)}(\ga_{i\ell})=\gb_{i\ell}}$ and 
$\indic{\pi_{I_j}\xxq{k(j)}(\ga_{jm})=\gb_{jm}}$.
If further $k(i)=k(j)$, then $\ga_{i\ell}=\ga_{jm}$, by our construction of 
$\pi\xxq{k(i)}_J$, and then $\gb_{i\ell}\neq\gb_{jm}$ since we have eliminated
repetitions. 
On the other hand, if $k(i)\neq k(j)$, then
$i$ and $j$ belong to different blocks, so by the
definition of blocks $B_i\cap B_j=\emptyset$ and thus
$\gb_{im}\neq\gb_{pq}$.
Hence, in any case, \eqref{ga=} implies $\gb_{i\ell}\neq \gb_{jm}$, and thus
\begin{align}\label{=0}
  \indic{\pi_{I_i}\xxq{k(i)}(\ga_{i\ell})=\gb_{i\ell}}\cdot
\indic{\pi_{I_i}\xxq{k(i)}(\ga_{jm})=\gb_{jm}}=0.
\end{align}

This and \refClaim{CLXi}  show
that conditioned on $\Xi$,
either $\prod_{i\in R}Y_{i;I_i}$ vanishes because the product contains conflicting
indicators, or else all 
$\pi_{I_i}\xxq{k(i)}(\ga_{ij})$ occurring in the $\mue(R)$ factors remaining
in $\tZ$
are distinct and obtained by drawing without
replacement from $[N]$.
Hence, in any case,
\begin{align}\label{eva}
  \E\Bigpar{ \prod_{i\in R}Y_{i;I_i}\Bigm|\Xi} 
\le   \E\bigpar{ \tZ\bigm|\Xi} 
\le\frac{1}{(N)_{\mue(R)}}
\le C {N^{-\mue(R)}}.
\end{align}

This is valid for any $I_i\subseteq [k(i)-1]$. 
Hence, by the definitions
\eqref{YYR} and
\eqref{gDJ} 
\begin{align}\label{ewa}
\E \bigpar{\bigabs{\YY_R}\mid\Xi}=
  \E\Bigpar{ \prod_{i\in R}\bigabs{Y_{i;\gD J_i}}\Bigm|\Xi} 
\le\sum_{I_i\subseteq J_i,\,i\in R} \E\Bigpar{\prod_{i\in R}Y_{i;I_i}\Bigm|\Xi} 
\le C N^{-\mue(R)}.
\end{align}

Suppose that $j<k$, and suppose that $\tau\xxq k$ and
$\tau\xxq j$ have no common element. 
Then, for every $J\subseteq[k-1]$,  
an element $\tau\xxq k(\ga,\ell)$ is bad for $J$ if and only if it is bad for 
$J\cup\set{j}$,
and thus
$\pi_{J\cup\set{j}}\xxq k(\ga)=\pi_{J}\xxq k(\ga)$ for all $\ga\in A(\gG_k)$;
hence  $Y_{i;J\cup\set{j}}=Y_{i;J}$ for $i\in \gG_k$.
Consequently, \eqref{gDJ} shows that then $Y_{i;\gD J}=0$ for every $J$ such that
$j\in J$.
In contrapositive form, this shows the following, recalling \eqref{xi},
\begin{claim}\label{CLmagnus}
If\/ $Y_{i;\gD J}\neq0$, $i\in\gG_k$ and $j\in J$, then 
$\tau\xxq k$ and
$\tau\xxq j$ have
at least one common element,
i.e.,
$\xi_{jk\ell m}=1$ for some $\ell\in[L_j]$ and $m\in [L_k]$.
\end{claim}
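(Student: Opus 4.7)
My plan is to prove Claim~\ref{CLmagnus} by contraposition: assume $\tau\xxq k$ and $\tau\xxq j$ share no common element, and deduce that $Y_{i;\gD J} = 0$ whenever $i \in \gG_k$ and $j \in J$. Since $j \in J \subseteq [k(i)-1] = [k-1]$, we have $j < k$, so the construction in the proof uses $\tau\xxq j$ to potentially mark some entries of $\tau\xxq k$ as bad. The whole point of the hypothesis is that under no-common-element, $\tau\xxq j$ contributes nothing to the list of bad elements at level $k$.

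The key observation is the following. For any $I \subseteq [k-1]$, an entry $\tau\xxq k(\ga,\ell)$ is bad for $I$ iff it appears in some $\tau\xxq {j'}$ with $j' \in I$. If $\tau\xxq k$ and $\tau\xxq j$ have no common element, then no entry of $\tau\xxq k$ is bad because of $j$; hence the set of bad entries for $I \cup \{j\}$ coincides with that for $I \setminus \{j\}$, for every $I \subseteq [k-1]$. Consequently $\pi\xxq k_{I \cup \{j\}}(\ga) = \pi\xxq k_{I \setminus \{j\}}(\ga)$ for all $\ga \in A(\gG_k)$, and so by \eqref{YiJ}, $Y_{i; I \cup \{j\}} = Y_{i; I \setminus \{j\}}$ for every $i \in \gG_k$ and every $I \subseteq [k-1]$.

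To finish, I would pair up the terms in the Möbius sum \eqref{gDJ}. Writing $J = J_0 \sqcup \{j\}$ with $j \notin J_0$, we decompose the sum according to whether $j \in I$:
\begin{align*}
Y_{i;\gD J}
&= \sum_{I_0 \subseteq J_0} (-1)^{|J_0 \setminus I_0|} Y_{i; I_0 \cup \{j\}}
 + \sum_{I_0 \subseteq J_0} (-1)^{|J_0 \setminus I_0| + 1} Y_{i; I_0} \\
&= \sum_{I_0 \subseteq J_0} (-1)^{|J_0 \setminus I_0|}
   \bigpar{Y_{i; I_0 \cup \{j\}} - Y_{i; I_0}}.
\end{align*}
Each bracket vanishes by the identity derived above (taking $I = I_0$), so $Y_{i;\gD J} = 0$. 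The contrapositive is exactly Claim~\ref{CLmagnus}. There is no real obstacle here; the content has already been packaged in the paragraph preceding the claim, and the only step that requires care is verifying that the ``bad'' condition is genuinely insensitive to adding or removing index $j$ under the no-common-element hypothesis, which is immediate from the definition of badness.
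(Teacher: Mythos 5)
Your proposal is correct and follows the paper's own argument: assume no common element between $\tau\xxq k$ and $\tau\xxq j$, observe that badness at level $k$ is insensitive to whether $j$ is in the index set, deduce $Y_{i;I\cup\set{j}}=Y_{i;I}$, and conclude from \eqref{gDJ} that $Y_{i;\gD J}=0$ when $j\in J$. The only difference is that you spell out the pairing of terms in the M\"obius sum explicitly, whereas the paper just cites \eqref{gDJ} and leaves that cancellation implicit.
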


If $F$ is a  graph with vertex set $\subseteq[b]$, say that $\Xi$
\emph{covers} $F$ if for every edge $jk\in F$ with $j<k$, there exist
$\ell\in[L_j]$ and $m\in [L_k]$ such that $\xi_{jk\ell m}=1$. 
For a set $R\subseteq[r]$, let
$\HXX{R}$ be the graph with edges 
\begin{align}\label{gLR}
E\bigpar{\HXX{R}}:=
\bigcup_{k\in[b]}  \bigcup_{i\in R\cap \gG_k}\bigset{jk: j\in J_i}
\end{align}
Then, by \eqref{YYR} and \refClaim{CLmagnus}, if $\YY_R\neq0$, then
$\Xi$ covers $\HXX{R}$.
This implies that we can improve \eqref{ewa} to
\begin{align}\label{ewb}
\E \bigpar{\abs{\YY_R}\mid\Xi}
\le C N^{-\mue(R)}\indic{\text{$\Xi$ covers $\HXX{R}$}}.
\end{align}
Hence,
\begin{align}\label{ewc}
\E \bigabs{\YY_R}
\le C N^{-\mue(R)}\P\bigpar{\text{$\Xi$ covers $\HXX{R}$}}.
\end{align}

Recall that we have fixed $J_1,\dots,J_r$, so $\HXX{R}$ is for each
$R\subseteq[r]$ a fixed non-random  graph on $[b]$. 
If $F$ is any graph on $[b]$, let $\rho(F)$ be the number of edges in a
spanning forest. (This equals $b$ minus the number of components of $F$.)
We claim that if $F$ is any graph on $[b]$,
\begin{align}\label{skog}
  \P(\text{$\Xi$ covers $F$})\le C N^{-\rho(F)}.
\end{align}
To see this, we may replace $F$ by a spanning forest, so it suffices to
show \eqref{skog} when $F$ is a forest.
We do this by induction on $\rho(F)$; the base case $\rho(F)=0$ being trivial.
If $\rho(F)>0$, let $k$ be a leaf in $F$, let $k\ell$ be the
edge incident to
$k$, and let $F':=F-k\ell$. 
Recall that $\Xi$ covers $k\ell$ if $\tau\xxq k$ and $\tau\xxq\ell$ have a
common element.
Thus, conditioning on $\tau\xxq j$ for all $j\neq k$, 
the probability that $\Xi$ covers $k\ell$ is at most $L_kL_\ell N\qw \le C
N\qw$.
Hence,
\begin{align}
  \P\bigpar{\text{$\Xi$ covers $F$}}
\le C N\qw  \P\bigpar{\text{$\Xi$ covers $F'$}},
\end{align}
and \eqref{skog} follows by induction.

Combining \eqref{ewc} and \eqref{skog}, we find
\begin{align}
  \label{erika}
\E\bigabs{\YY_R} \le C N^{-\mue(R)-\rho(\HXX{R})}.
\end{align}

Finally, consider as in \eqref{kom} a partition $R_1,\dots,R_q$ of $[r]$.
Then \eqref{mue} and \eqref{muR} imply
\begin{align}\label{win}
\mue=
\mue([r])\le\sum_p\mue(R_p).  
\end{align}
Furthermore, $\bigcup_p\HXX{R_p}=\HX$ by \eqref{gLR}. 
If we take a spanning subtree
$\HXY{R_p}\subseteq\HXX{R_p}$ for each $p\le q$, then the union of these
subtrees 
has the same components as  $\bigcup_p\HXX{R_p}=\HX$.
Hence, if $\HX$ is connected, $\bigcup_p\HXY{R_p}$ 
is connected and has thus
at least $b-1$ edges,
which implies
\begin{align}\label{ston}
  \sum_{p}\rho(\HXX{R_p})\ge b-1.
\end{align}
Consequently, \eqref{erika} implies by \eqref{win}--\eqref{ston},
\begin{align}
  \label{erika2}
\Bigabs{\prod_{p=1}^q\E\YY_{R_p}} 
\le
\prod_{p=1}^q\E\bigabs{\YY_{R_p}} 
\le C N^{-\sum_p\mue(R_p)-\sum_p\rho(\HXX{R_p})}
\le C N^{-\mue-(b-1)}.
\end{align}
As said above, 
the mixed cumulant $\kk(Y_{1;\gD J_1},\dots,Y_{r;\gD J_r})$ in
\eqref{cumsum} is by \ref{KKsum} and \eqref{YYR}
a linear combination of such products, and thus
\eqref{erika2} yields
\begin{align}
  \label{eleonora}
\bigabs{\kk(Y_{1;\gD J_1},\dots,Y_{r;\gD J_r})}
\le C N^{-\mue-(b-1)}.
\end{align}
We have here assumed that $\HX$ is connected, but as said in \refClaim{CL0},
the cumulant vanishes otherwise, so \eqref{eleonora} holds in general.

Finally, also as said above, the result \eqref{l1}
follows from \eqref{cumsum} and \eqref{eleonora}.
\end{proof}

\section{Proofs of Theorems \ref{T1}, \ref{T2} and \ref{T1xx}}\label{SY}

We prove first \refT{T1} for $\ggndd$, 
and show then how the proof can be extended to \refT{T2} for
the simple graph $\gndd$.
\refT{T1xx} follows by simple modifications.

As said earlier, we use the method of moments in the form with cumulants; 
we thus show convergence of all (mixed) cumulants.
The proofs will use \refL{L1} to estimate higher cumulants, as detailed
below. In addition, we estimate means and variances (and higher moments for
unicyclic components) by standard methods; for
the sake of focussing the presentation, we state these
results as the following lemmas but postpone their proofs to \refS{Smean}.
Recall that
$\ZZ_H:=Z_H\bigpar{\ggndd}$, 

\begin{lemma}
  \label{LM}
Assume \refAA.
Then, for the random multigraph $\ggndd$:
  \begin{romenumerate}
  \item     \label{LMa}
If\/ $H$ is a tree, then
\begin{align}\label{lma}
\E \ubZZ_H = n \gl_H + o(n)
\end{align}
with $\gl_H$ given by \eqref{glh}.

  \item     \label{LMb}
If\/ $H_1,H_2$ are trees, then
\begin{align}\label{lmb}
\Cov\bigpar{\ubZZ_{H_1},\ubZZ_{H_2}}=n \gs_{H_1,H_2}+o(n),
\end{align}
where
$  \gs_{H_1,H_2}$ is given by \eqref{gshh}.

\item \label{LMc}
If\/ $H$ is a connected unicyclic multigraph, then
\begin{align}\label{lmc}
\E \ubZZ_H\to \gl_H,
\end{align}
with $\gl_H$ as in \eqref{glh}.
Moreover, for any distinct
such multigraphs $H_1,\dots,H_\ell$, and integers
$r_1,\dots,r_\ell\ge0$, the mixed factorial moments converge:
\begin{align}\label{lmc2}
  \E \prod_{i=1}^k(\ubZZ_{H_i})_{r_i} 
\to \prod_{i=1}^k \gl_{H_i}^{r_i}.
\end{align}

\item \label{LMd}
If\/ $H$ is a connected multigraph with more than one cycle, i.e.,
$e(H)>v(H)$, then 
\begin{align}
  \label{lmd}
\E\ubZZ_H\to0. 
\end{align}
  \end{romenumerate}
\end{lemma}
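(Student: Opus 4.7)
The plan is to compute all of these moments directly from the configuration model. Since $\ubZZ_H=\bZZ_H/\aut(H)$, it suffices to work with the labelled counts $\bZZ_H$. A labelled isolated copy of $H$ in $\ggndd$ is specified by an injection $\phi\colon V(H)\to[n]$ with $d_{\phi(u)}=d_H(u)$ for every $u\in V(H)$, together with, for each $u$, a bijection between the half-edges of $H$ at $u$ and the half-edges of $\ggndd$ at $\phi(u)$; the degree matching forces every half-edge at $\phi(V(H))$ to lie within the copy, so isolation follows automatically once the $e(H)$ prescribed half-edge pairs are realized in the random matching. The standard configuration-model computation gives that the probability of any $e$ prescribed disjoint half-edge pairs is $1/\prod_{i=0}^{e-1}(N-2i-1)=N^{-e}\bigpar{1+O(N\qw)}$.

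For part \ref{LMa}, the number of labelled embeddings of this type equals $\prod_k(n_k)_{n_k(H)}\cdot\prod_{u\in H}d_H(u)!$, so
\begin{align*}
\E\bZZ_H=\prod_k(n_k)_{n_k(H)}\cdot\prod_{u\in H}d_H(u)!\cdot N^{-e(H)}\bigpar{1+O(N\qw)}.
\end{align*}
Using \eqref{ntopk}, \eqref{N2}, and $v(H)-e(H)=1$ for a tree, the \rhs{} equals $n\aut(H)\gl_H+o(n)$ with $\gl_H$ as in \eqref{glh}; dividing by $\aut(H)$ proves \eqref{lma}.

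For part \ref{LMb}, two isolated copies isomorphic to $H_1$ and $H_2$ are either vertex-disjoint or (only when $H_1=H_2$) the same copy counted twice, giving
\begin{align*}
\E\bigpar{\ubZZ_{H_1}\ubZZ_{H_2}}=S+\gd_{H_1,H_2}\E\ubZZ_{H_1},
\end{align*}
where $S$ is the expected number of ordered, vertex-disjoint pairs. A parallel enumeration computes $S$, and the ratio $S/\bigpar{\E\ubZZ_{H_1}\cdot\E\ubZZ_{H_2}}$ differs from $1$ by two corrections of relative size $\Theta(1/n)$. The vertex-choice correction
\begin{align*}
\prod_k\frac{(n_k)_{n_k(H_1)+n_k(H_2)}}{(n_k)_{n_k(H_1)}(n_k)_{n_k(H_2)}}=1-\sum_k\frac{n_k(H_1)n_k(H_2)}{n_k}+o(n\qw)
\end{align*}
arises from enforcing vertex-disjointness, while the matching-probability correction is $1+2e(H_1)e(H_2)/N+o(N\qw)$, obtained by comparing the shared matching on $e(H_1)+e(H_2)$ pairs to the independent matchings on $e(H_1)$ and $e(H_2)$ pairs. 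Multiplying these corrections by $\E\ubZZ_{H_1}\E\ubZZ_{H_2}=\Theta(n^2)$ and invoking $n_k/n\to p_k$, $N/n\to\mu$ produces the bracketed term in \eqref{gshh}; the diagonal contribution $\gd_{H_1,H_2}\E\ubZZ_{H_1}/n\to\gd_{H_1,H_2}\gl_{H_1}$ supplies the first term.

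For parts \ref{LMc} and \ref{LMd}, the same enumeration applies, now with the leading factor $n^{v(H)-e(H)}$ equal to $1$ (unicyclic) or at most $n\qw$ (more than one cycle), proving \eqref{lmc} and \eqref{lmd}. The joint factorial moments \eqref{lmc2} follow by enumerating ordered tuples of pairwise vertex-disjoint isolated copies of the $H_i$ with multiplicities $r_i$; all vertex-choice and matching-probability ratios are again $1+O(1/n)$, so the count factorizes asymptotically as $\prod_i\gl_{H_i}^{r_i}$. The main obstacle throughout is the bookkeeping in part \ref{LMb}: tracking the signs and prefactors of the two cancellations so that they combine into exactly \eqref{gshh}, and handling the degenerate cases where some $p_k=0$ (then necessarily $n_k(H_i)=0$ for any $H_i$ contributing nontrivially, so the conventions $0/0=0$ and $0\cdot\infty=0$ render the formula self-consistent).
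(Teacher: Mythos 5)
Your proposal is correct and follows essentially the same route as the paper: exact enumeration of labelled isolated copies via the configuration-model pairing probabilities, followed by asymptotic expansion of the resulting product formulas. The one cosmetic difference is that the paper computes $\E\bigpar{\bZZ_{H_1}\bZZ_{H_2}}$ by conditioning on one copy being present (exploiting that the remainder is again a configuration model on $n_k-h_k$ vertices of degree $k$ and $N-2e(H_1)$ half-edges), whereas you enumerate vertex-disjoint pairs directly and compare the combined pairing probability $1/((N-1))_{e(H_1)+e(H_2)}$ to the product of the two individual ones; since $(n_k)_{h_k+h_k'}/\bigpar{(n_k)_{h_k}(n_k)_{h_k'}}=(n_k-h_k)_{h_k'}/(n_k)_{h_k'}$, these yield identical correction factors and hence the same formula \eqref{gshh}.
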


\begin{lemma}\label{LM1.5}
  Assume \refAA.
For every tree $H$, \eqref{gl=p} holds, \ie, 
$\gl_H=p_H/|H|$.
\end{lemma}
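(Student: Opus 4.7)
The plan is to compute $p_T$ explicitly for rooted unlabelled trees $T$, then sum over all rooted versions of $H$ to evaluate $p_H$, and finally compare with the defining formula \eqref{glh} for $\gl_H$.

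\emph{Step 1 (rooted formula).} I would first view $\cT$ as a random plane rooted tree: the root has $D$ offspring, every other vertex independently has $\hD-1$ offspring (labelled by birth order). For a specific plane rooted tree $T^*$, writing $d^+(v)$ for the number of children of $v$,
\begin{equation*}
\P(\cT = T^*) = p_{d^+(r)} \prod_{v\neq r} \hp_{d^+(v)+1}.
\end{equation*}
The number of distinct plane realizations of an unlabelled rooted tree $T$ is the standard quantity $\prod_{v\in T} d^+(v)!/|\aut_{\mathrm{rooted}}(T)|$. Since $d^+(r) = d_T(r)$ while $d^+(v) = d_T(v)-1$ for $v\neq r$, substituting $\hp_k = kp_k/\mu$ and collecting factors yields, after routine simplification,
\begin{equation*}
p_T = \frac{\mu^{-(|T|-1)}}{|\aut_{\mathrm{rooted}}(T)|} \prod_{v\in T} p_{d_T(v)}\, d_T(v)!.
\end{equation*}

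\emph{Step 2 (unrooted sum).} For the unrooted tree $H$, the rooted versions of $H$ (up to rooted isomorphism) correspond bijectively with the orbits $O$ of $\aut(H)$ on $V(H)$, and
\begin{equation*}
p_H = \sum_{\text{orbits } O} p_{T_O}.
\end{equation*}
The crucial observation is that the product $\prod_{u\in H} p_{d_H(u)} d_H(u)!$ in the formula for $p_T$ is \emph{independent of the choice of root}. By the orbit--stabilizer theorem, $|\aut_{\mathrm{rooted}}(T_O)| = |\aut(H)|/|O|$, so
\begin{equation*}
p_H = \frac{\mu^{-(|H|-1)}}{|\aut(H)|}\Bigpar{\prod_{u\in H} p_{d_H(u)}\, d_H(u)!}\sum_O |O|.
\end{equation*}
Using $\sum_O |O| = |V(H)| = |H|$ collapses the sum and gives a clean closed form.

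\emph{Step 3 (comparison).} Dividing by $|H|$ and invoking $e(H) = |H|-1$ for trees, I match the resulting expression term-for-term with \eqref{glh} to conclude $p_H/|H| = \gl_H$. There is no real obstacle: the only subtle point is the bookkeeping in Step~1 relating $d^+$ to $d_T$ and handling the root specially, but once the $\hp_k = kp_k/\mu$ substitution cancels the factors of $d_T(v)$, everything falls into place; Steps~2 and~3 are then purely algebraic.
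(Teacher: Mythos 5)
Your proof is correct, and it takes a genuinely different route from the paper's. The paper does not compute $p_H$ at all: it observes that a uniformly random vertex $V$ of $\ggndd$ lies in a component isomorphic to $H$ with probability $|H|\,\E\ubZZ_H/n$, which tends to $|H|\gl_H$ by \refL{LM}\ref{LMa}, while the same probability tends to $p_H$ by the branching-process coupling of \refSS{SGW}; equating the two limits gives \eqref{gl=p} in three lines. Your argument instead evaluates $p_H$ from scratch: you use the Galton--Watson structure to derive $p_T = \mu^{-(|T|-1)}|\aut_{\mathrm{rooted}}(T)|^{-1}\prod_{v}p_{d_T(v)}d_T(v)!$ for each rooted version $T$ of $H$ (the cancellation $\hp_{d_T(v)}\cdot (d_T(v)-1)! = p_{d_T(v)}d_T(v)!/\mu$ is exactly right), then sum over rootings via orbit--stabilizer and $\sum_O|O|=|H|$, and match against \eqref{glh}. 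Your approach is self-contained and explicit, verifying the identity by direct combinatorics without appealing to the exploration-process coupling or to \refL{LM}; the paper's approach is shorter because it reuses infrastructure already in place, and it makes the identity conceptually transparent as two evaluations of the same asymptotic probability. Both are valid; yours would be preferable if one wanted \eqref{gl=p} independently of the asymptotic estimate \eqref{lma}.
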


\begin{lemma}\label{LM2}
Assume \refAAp.
If\/ $H$ is a tree with $v(H)>1$, then $\gs_{H,H}>0\iff \gl_H>0$.
More generally,
 if\/ $H_1,\dots,H_\ell$ are distinct
trees with $v(H_i)>1$ and $\gl_{H_i}>0$ for each $i$,
then the matrix $\bigpar{\gs_{H_i,H_j}}$ is non-singular.
\end{lemma}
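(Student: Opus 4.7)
I plan to prove the non-singularity of $\gS=(\gs_{H_i,H_j})$ by direct algebraic manipulation of the quadratic form $c^T\gS c$ combined with an orthogonal-decomposition argument in an auxiliary Hilbert space. Since \refT{T1} identifies $c^T\gS c$ as a limiting variance, it is automatically non-negative, so $\gS$ is positive semidefinite; it therefore suffices to rule out a non-trivial kernel, i.e., to show that $c^T\gS c=0$ forces $c=0$. The first statement (the single-tree case $\ell=1$) then follows as a special case.

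Introduce the auxiliary variables $a_i:=c_i\gl_{H_i}$ (so $a_i\ne 0\iff c_i\ne 0$, since $\gl_{H_i}>0$), $A:=\sum_i a_i e(H_i)$, $S:=\sum_i a_i v(H_i)$, and $B_k:=\sum_i a_i n_k(H_i)$. Substituting into \eqref{gshh} yields
\begin{equation*}
c^T\gS c \;=\; \sum_i\frac{a_i^2}{\gl_{H_i}} + \frac{2A^2}{\mu} - \sum_{k:p_k>0}\frac{B_k^2}{p_k},
\end{equation*}
where restricting to $p_k>0$ is legitimate because $\gl_{H_i}>0$ together with \eqref{t1a} forces $n_k(H_i)=0$, hence $B_k=0$, whenever $p_k=0$; also, $v(H_i)=e(H_i)+1$ for trees gives $S=A+\Theta$ with $\Theta:=\sum_i a_i$. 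Work next in the Hilbert space $L^2(p)$ on $\set{k:p_k>0}$ with inner product $\innprod{f,g}:=\sum_k p_k f(k) g(k)$: the function $Y(k):=B_k/p_k$ satisfies $\|Y\|^2=\sum_k B_k^2/p_k$, $\innprod{Y,1}=S$, and $\innprod{Y,D}=2A$. By \ref{Ap1}, the law of $D$ is non-degenerate ($p_1>0$ and $p_j>0$ for some $j\ge 2$), so $\gss_D:=\Var D>0$ and $\set{1,D}$ spans a two-dimensional subspace. Orthogonally projecting $Y$ onto this subspace and writing $Y=PY+Y^\perp$ yields
\begin{equation*}
\sum_k\frac{B_k^2}{p_k}\;=\;S^2+\frac{(2A-S\mu)^2}{\gss_D}+\bignorm{Y^\perp}^2_{L^2(p)},
\end{equation*}
so that $c^T\gS c=\sum_i a_i^2/\gl_{H_i}+Q(A,\Theta)-\|Y^\perp\|^2$ for an explicit quadratic form $Q(A,\Theta)$ whose coefficients depend only on $\mu$ and $\gss_D$.

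Now suppose $c^T\gS c=0$ for some $c\ne 0$. Combining this identity with the a priori non-negativity of $c^T\gS c$ (applied to suitable perturbations of $c$) yields both $\|Y^\perp\|^2=0$, so that $B_k=p_k(\alpha+\beta k)$ on $\supp(p)$ for some constants $\alpha,\beta$, and a residual scalar identity linking $\sum_i a_i^2/\gl_{H_i}$ to $(A,\Theta)$ and the first two moments of $D$. The relation $\sum_i a_i n_k(H_i)=p_k(\alpha+\beta k)$ on $\supp(p)$, combined with the integrality and non-negativity of the $n_k(H_i)$, the distinctness of the trees $H_i$, and the multi-point support of $p$ (at least two values of $k$, guaranteed by \ref{Ap1} since $p_0+p_1<1$), is then shown to force $a=0$. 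This last combinatorial step is the main obstacle, because distinct trees may share degree sequences, so the vectors $(n_k(H_i))_k$ need not be linearly independent and one must use all constraints simultaneously, exploiting that $p$ has mass at $k=1$ and at some $k\ge 2$. As a sanity check, the single-tree case $\ell=1$ with $H=\sK_2$ reduces by direct substitution to $\gs_{\sK_2,\sK_2}/\gl_{\sK_2}=(1-p_1/\mu)^2$, which is strictly positive under \ref{Ap1} since $p_0+p_1<1$ forces $p_1<\mu$.
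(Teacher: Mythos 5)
Your proposal runs in a genuinely different direction from the paper's argument, and while the algebraic identity $c^T\gS c = \sum_i a_i^2/\gl_{H_i} + 2A^2/\mu - \sum_k B_k^2/p_k$ is correct (I checked it, and your $\sK_2$ sanity check is also correct), there are several concrete gaps that I do not see how to close along the route you sketch.

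First, the Hilbert-space projection onto $\operatorname{span}\{1,D\}$ in $L^2(p)$ requires $D\in L^2(p)$, i.e.\ $\E D^2<\infty$; but \refL{LM2} assumes only \refAAp, not \ref{AD2}, so $\gss_D$ may be infinite and your decomposition need not make sense. Second, even when it does make sense, the piece $Q(A,\Theta) = 2A^2/\mu - S^2 - (2A-\mu S)^2/\gss_D$ in your decomposition is not sign-definite (for instance, $A=\mu$, $S=2$ gives $Q=2\mu-4$, which is negative whenever $\mu<2$), so the identity $c^T\gS c=0$ does not by itself force $\|Y^\perp\|^2=0$; and the appeal to ``suitable perturbations of $c$'' is not spelled out and I cannot see what family of perturbations would yield an orthogonality relation in the infinite-dimensional $L^2(p)$ from constraints on a finite-dimensional $c$. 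What positive semidefiniteness of $\gS$ does give you is $\gS c=0$, i.e.\ $\sum_i a_i\gs_{H_i,H_j}=0$ only for $j$ in the original finite index set -- this is not enough. Third, you acknowledge the remaining combinatorial step as ``the main obstacle,'' and it really is one: the vectors $\bigpar{n_k(H_i)}_k$ can coincide for distinct trees, and the integrality of the $n_k(H_i)$ does not directly constrain the real coefficients $a_i=c_i\gl_{H_i}$.

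The paper's proof sidesteps all three issues by never working with the quadratic form alone. From $c^T\gS c=0$ it uses \eqref{cov} and the Cauchy--Schwarz inequality applied to the covariances of $\sum_i a_i\ubZZ_{H_i}$ with $\ubZZ_H$ for an \emph{arbitrary} tree $H$ to extend the orthogonality relation $\sum_i a_i\gs_{H_i,H}=0$ from the finite set $\set{H_j}$ to \emph{all} trees $H$. It then exploits a ``stretching'' construction: given a tree $T$ and $r\ge2$ with $p_r>0$, replace a leaf by a degree-$r$ vertex with $r-1$ new leaves to obtain $T\xx1$, and iterate. The inner product $\innprod{H_i,T\xx j}$ defined in \eqref{byaj} is a linear function of $j$, while the Kronecker delta term $\gd_{H,H_i}$ vanishes as soon as $v(T\xx j)$ is large enough. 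So the relation $\sum_i a_i\gl_{H_i}\innprod{H_i,T\xx j}=0$ holds for all large $j$, hence (by linearity in $j$) for $j=0$; substituting back, the delta term is forced to vanish on its own, giving $a_1\gl_{H_1}=0$, a contradiction. This argument uses only finite sums over $k$, so it never needs $\E D^2<\infty$. If you want to pursue your route, the critical missing ingredient is the extension to all trees $H$ and a way to encode what you would like to be $\|Y^\perp\|^2=0$ via tested against the $n_\cdot(T)/p$ for stretched $T$; at that point you will essentially have rederived the paper's argument in a heavier framework.
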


The next lemma shows that, assuming the second moment condition
\ref{AD2}, \refL{LM}\ref{LMc} 
extends 
to include also the numbers of loops $\uZZ_{\sC_1}$ 
and pair of multiple edges $\uZZ_{\sC_2}$, \cf{} \refE{Esimple}.
(We have to restrict to simple graphs $H_i$; if $H$ is not simple, there is
obviously a strong dependence between $\ubZZ_H$ and $\uZZ_{\sC_1}$ or
$\uZZ_{\sC_2}$.) 

\begin{lemma}
  \label{LM3}
Assume \refAAA.
Let $H_1,\dots,H_k$ be connected unicyclic simple graphs.
Then, for any integers $s_1,s_2,r_1,\dots,r_k\ge0$, 
\begin{align}\label{lm3}
  \E \Bigpar{
\bigpar{\uZZ_{\sC_1}}_{s_1}\bigpar{\uZZ_{\sC_2}}_{s_2}
\prod_{i=1}^k(\ubZZ_{H_i})_{r_i} 
}
\to \glscx1^{s_1}\glscx2^{s_2}
\prod_{i=1}^k \gl_{H_i}^{r_i},
\end{align}
where
\begin{align}
\glscx1:=\frac{\E D(D-1)}{2},
&&&
\glscx2:=\Bigparfrac{\E D(D-1)}{2}^2.
\label{glscx} 
\end{align}
\end{lemma}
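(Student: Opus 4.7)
The plan is to extend the factorial-moment computation used for \refL{LM}\ref{LMc} by enlarging the tuples. Write
\begin{equation*}
\E\Bigl[(\uZZ_{\sC_1})_{s_1}(\uZZ_{\sC_2})_{s_2}\prod_{i=1}^k(\ubZZ_{H_i})_{r_i}\Bigr]
=\sum_{\mathcal T}\P\bigpar{\mathcal T\subset\ggndd},
\end{equation*}
where $\mathcal T$ ranges over ordered tuples consisting of $s_1$ distinct loops, $s_2$ distinct pairs of parallel edges and, for each $i$, $r_i$ distinct isolated copies of $H_i$, each object being specified by its vertices and half-edges. The first observation is a disjointness property coming from simplicity: because every $H_i$ is a simple connected graph, no vertex of an isolated copy of $H_i$ may simultaneously host a loop or be incident to a double edge (such an edge would belong to the component, contradicting simplicity of $H_i$). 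Consequently every nonzero term automatically has the short-cycle objects and the component copies on disjoint vertex (hence disjoint half-edge) supports.

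If $\mathcal T$ forces $E$ prescribed pairings of half-edges, then, by the uniformity of the matching in the configuration model,
\begin{equation*}
\P\bigpar{\mathcal T\subset\ggndd}=\frac{(N-2E-1)!!}{(N-1)!!}=\bigpar{1+O(1/N)}\,N^{-E},
\end{equation*}
uniformly over bounded $E$. I would then split the sum by the collision pattern among the $\sC_1$- and $\sC_2$-objects themselves. The dominant contribution comes from configurations in which the $s_1+s_2$ short-cycle objects sit on $s_1+2s_2$ distinct vertices with pairwise disjoint half-edge supports. Any collision (two loops at the same vertex, a loop incident to a double edge, two double edges sharing a vertex, and so on) replaces an independent factor of the form $\sum_i d_i(d_i-1)/N$ by a lower-order quantity carrying an extra power of $\dmax$; by \eqref{dmaxo} together with \eqref{EDn2}, the total mass of colliding configurations is $o(1)$.

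In the dominant case the probability factorizes across individual objects up to a factor $1+O(1/n)$. Summed over placements, the loop part tends to $\glscx1^{s_1}$ and the double-edge part analogously tends to $\glscx2^{s_2}$ by elementary calculations using \ref{AD2}. The sum over isolated-copy placements is precisely the one evaluated in \refL{LM}\ref{LMc}, restricted to avoid the $O(1)$ vertices already used by the short-cycle objects; this restriction multiplies the sum by $1+O(1/n)$, and the limit is still $\prod_i\gl_{H_i}^{r_i}$. Multiplying the three limits yields \eqref{lm3}.

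The main obstacle is the combinatorial bookkeeping in the collision step: producing a uniform $o(1)$ bound on the total mass of configurations in which two short-cycle objects share a vertex or a half-edge. Decoupling from the $\ubZZ_{H_i}$ factors is automatic by simplicity; the real work lies in controlling overlaps among the loops and double edges, where \ref{AD2} enters crucially through both $\E D_n^2=O(1)$ and $\dmax=o(n\qq)$.
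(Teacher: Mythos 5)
Your approach is correct in substance but follows a different and more laborious route than the paper's. The paper's proof is a one-paragraph reduction: it fixes copies $H_{\phi_{ij}}$ of each $H_i$, conditions on $I_{\phi_{ij}}=1$ for all $i,j$, observes that $\ggndd\setminus\bigcup_{i,j}H_{\phi_{ij}}$ is again a configuration model with the natural adjusted degree sequence, and applies the $k=0$ case of \eqref{lm3} to that smaller model; the $k=0$ case itself (joint factorial moments of loop and double-edge counts) is not re-derived but cited from the proof of \eqref{pg} in \cite[Section~7, Remark~6]{SJ195}, and the lemma then follows by the same summation-and-conditioning argument as in \eqref{moc}. You instead expand the whole factorial moment directly as a sum over tuples of loops, double edges and isolated copies, and carry out the collision analysis from scratch -- effectively re-proving the $k=0$ result that the paper delegates to a reference.

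A few specific remarks. Your observation that simplicity of each $H_i$ forces the loop/double-edge objects to avoid all half-edges of the isolated copies is exactly right, and it is precisely what makes the paper's conditioning step work (it guarantees $\uZZ_{\sC_j}(\ggndd)=\uZZ_{\sC_j}(\ggndd\setminus\bigcup H_{\phi_{ij}})$ on the conditioning event); you state it explicitly, the paper uses it tacitly. Your collision bound is sound in spirit: any coincidence among the short-cycle objects costs at least one extra factor $d_i$ on a single vertex without a compensating factor of $N$, so such terms are $O(\dmax^2/n)=o(1)$ by \eqref{dmaxo} and \eqref{EDn2}; the bookkeeping you flag as "the main obstacle" is indeed where the bulk of the work lies if one does not quote \cite{SJ195}. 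The factorization of $\P(\mathcal T\subset\ggndd)$ across disjointly-supported objects up to a $1+O(1/n)$ factor is correct, since the total number of prescribed pairings is bounded. In short: both proofs are valid; the paper's is shorter because it exploits the self-reducibility of the configuration model under removing an isolated component and reuses an external result for the $k=0$ base case, whereas yours is self-contained at the price of redoing that base case and its collision estimates.
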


As said above, we postpone the proofs of these lemmas to \refS{Smean}.

Fix a (multi)graph $H$.
Let $h:=v(H)\ge1$, and assume that the vertices of
$H$ are labelled $1,\dots,h$. 
Our aim is to estimate the cumulants $\kk_r(\ZZ_H)$ for (fixed) $r\ge1$
and corresponding mixed cumulants.
In this section, $C$ denotes constants that may depend on the order $r$ and
the fixed (small) graph $H$ (and corresponding graphs below), 
but not on $n$;
the same holds for the implicit constants in $O(\dots)$.

We use the version of the configuration model described in \refS{Sconfig}.
A copy of $H$ in $\ggndd$ then corresponds to a copy of $\hH$ in $\hgndd$,
where $\hH$ is obtained from $H$ by subdividing each edge into two, and
regarding the new vertices as cuffs, and where we only count copies of $\hH$
that map real vertices to real vertices and cuffs to cuffs.
We consider also the identity of the half-edges used in the construction,
and see that an isolated labelled
copy of $\hH$ in $\hgndd$ is described by the following
data: 
\begin{PXenumerate}{$\phi$}
\item \label{phi1}
For each vertex $i\in V(H)=[h]$: a real vertex $v_{\nu(i)}$ such that
$d_{\nu(i)}= d_H(i)$.
Furthermore, $v(1),\dots,v(h)$ are distinct.
\item \label{phi2}
For each edge $ij$ in $H$: two half-edges $\ga_{ij}\in \gU_{\nu(i)}$ and
$\ga_{ij}'\in \gU_{\nu(j)}$,
and also
a cuff $\chi_{ij}$ and a labelling of the two half-edges at $\chi_{ij}$ as
$\gb_{ij}$ and $\gb'_{ij}$.
Furthermore, all these half-edges for $ij\in E(H)$ are distinct.
\end{PXenumerate}
Each such family of data 
$\phi:=\bigpar{\nu(i),\ga_{ij},\ga_{ij}',\chi_{ij},\gb_{ij},\gb_{ij}'}$ defines
a possible isolated labelled copy $\hH_\phi$ of $\hH$.
Thus, if $I_\phi$ is the indicator that $\hH_\phi$ exists in $\hgndd$,
and $\Phi(\hH)$ is the set of all such data $\phi$,
then 
\begin{align}\label{zaa}
\bZZ_H=
\bZ_H(\ggndd)=\sum_{\phi\in\Phi(\hH)}I_\phi.
\end{align}
Furthermore, $\hH_\phi$ exists in $\ggndd$ if and
only if the construction of $\hgndd$ yields edges 
$\ga_{ij}\gb_{ij}$ and $\ga_{ij}'\gb_{ij}'$ for each $ij\in E(H)$,
which is equivalent to 
$\pi(\ga_{ij})=\gb_{ij}$ and $\pi(\ga_{ij}')=\gb_{ij}'$.
Consequently, 
each $I_\phi$ is the product of $2e(H)$ indicators 
of the type $\indic{\pi(\ga)=\gb}$.
Each $I_\phi$ is thus a random variable of the type in \eqref{Yi}.

Consider now a sequence $H_1,\dots,H_r$ of multigraphs.
By \eqref{zaa} and multilinearity \ref{KKmulti}, the mixed cumulant 
$\kk\bigpar{\bZZ_{H_1},\dots,\bZZ_{H_r}}$ 
can be expanded
as
\begin{align}\label{zac}
\kk\bigpar{\bZZ_{H_1},\dots,\bZZ_{H_r}}
=\sum_{\phi_1\in\Phi(\hH_1)}\dotsm \sum_{\phi_r\in\Phi(\hH_r)}
 \kk\bigpar{I_{\phi_1},\dots,I_{\phi_r}},
\end{align}
where the mixed cumulants may be estimated by \refL{L1}.

It remains  to estimate
the parameters $b$ and $\mue$ in \refL{L1} and the corresponding number of terms
in \eqref{zac}. This is done in the following lemmas, using a standard
type of argument that is common in applications of the method of moments in
combinatorial problems.

\begin{lemma}
  \label{LL1}
Assume \refAA.
Let $H_1,\dots,H_r$ be a sequence of connected multigraphs.
Then
\begin{align}\label{zag}
\bigabs{\kk\bigpar{\bZZ_{H_1},\dots,\bZZ_{H_r}}}
\le C\sum_{\cF} n^{v(\cF)-e(\cF)-\kkk(\cF)+1},  
\end{align}
where we sum over all unlabelled bipartite multigraphs $\cF$ that can be
written as a 
union $\bigcup_{i=1}^r \hH_i'$ where $\hH_i'\cong \hH_i$ and, as above,
$\hH_i$ is obtained from $H_i$ by subdividing each edge into two.
\end{lemma}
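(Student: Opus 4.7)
The plan is to expand the cumulant via \eqref{zac} and to estimate the contributions uniformly using \refL{L1}, organising the tuples $(\phi_1,\dots,\phi_r)$ by the isomorphism type of the bipartite multigraph $\cF:=\bigcup_{i=1}^r\hH_i'$ that each tuple induces.

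For a fixed tuple, $I_{\phi_i}$ is a product of $2e(H_i)$ indicators $\indic{\pi(\ga)=\gb}$, so \refL{L1} applies with $\ell_i:=2e(H_i)$, and its two parameters will have clean graph-theoretic interpretations. The quantity $\mue$ in \eqref{mue} equals $e(\cF)$, since each pair $(\ga_{ij},\gb_{ij})$ is precisely an edge of $\cF$. The delicate part, which I expect to be the main obstacle, is to identify the number of blocks $b$ with the number $\kkk(\cF)$ of connected components of $\cF$. This identification uses crucially that we are counting \emph{isolated} copies: if two copies $\hH_i'$ and $\hH_j'$ map some vertices to the same real vertex $v_\ell$, then isolation forces each copy to use all $d_\ell$ half-edges at $v_\ell$, so they share a full set of $\ga$'s; likewise each cuff in any $\hH_k'$ has both of its half-edges used, so sharing a cuff forces sharing both of its $\gb$'s. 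Consequently, $\hH_i'$ and $\hH_j'$ lie in the same block iff they lie in the same component of $\cF$, so $b=\kkk(\cF)$. \refL{L1} then yields
\[
|\kk(I_{\phi_1},\ldots,I_{\phi_r})|\le CN^{-(\kkk(\cF)-1)-e(\cF)}.
\]

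Next I would count the labelled tuples inducing a given unlabelled $\cF$. All degrees in $\cF$ are bounded by a constant depending only on $r$ and the $H_i$'s. A real vertex $v$ of $\cF$ of degree $k$ must be mapped to a real vertex $v_\ell$ of $\ggndd$ with $d_\ell=k$ (again by isolation), together with a bijection between the $k$ half-edges at $v_\ell$ and those at $v$; the number of such choices is $n_k\cdot k!\le Cn$. Over the $n_R(\cF)$ real vertices this gives $O(n^{n_R(\cF)})$. Each of the $n_C(\cF)=e(\cF)/2$ cuffs of $\cF$ contributes $O(N)$ choices (a cuff of $\hgndd$ plus a labelling of its two half-edges), giving $O(n^{n_C(\cF)})$ in total. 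Hence the number of tuples inducing a given $\cF$ is $O(n^{v(\cF)})$.

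Combining these ingredients with $N=\Theta(n)$ (from \ref{ADmu}), each isomorphism class $\cF$ contributes at most $Cn^{v(\cF)}\cdot n^{-(\kkk(\cF)-1)-e(\cF)}=Cn^{v(\cF)-e(\cF)-\kkk(\cF)+1}$ to $|\kk(\bZZ_{H_1},\dots,\bZZ_{H_r})|$, and summing over the finitely many possible shapes $\cF$ yields \eqref{zag}. The remaining bookkeeping is routine; the only subtle point is the identification $b=\kkk(\cF)$, which relies on the rigidity that isolation imposes at each shared vertex of $\cF$.
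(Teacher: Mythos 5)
Your proposal is correct and follows essentially the same line as the paper's proof: expand by multilinearity, apply Lemma~\ref{L1} to each tuple $(\phi_1,\dots,\phi_r)$ via $\mue=e(\cF)$ and the block/component comparison, then count the tuples producing a given isomorphism type $\cF$. One small remark on emphasis: for the upper bound you only need $b\ge\kkk(\cF)$, which follows from the trivial observation that a shared half-edge yields a shared vertex and hence a shared component (no isolation needed); the converse direction you dwell on (sharing a vertex of $\cF$ forces sharing a full set of $\ga$'s or $\gb$'s, via isolation) gives the reverse inequality $b\le\kkk(\cF)$, which is true but not required. Isolation is really what you need in the counting step (to bound each real vertex's degree and hence the number of half-edge choices), as you use there.
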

We consider only unions $\bigcup_{i=1}^r \hH_i'$ respecting the bipartition
between real vertices and cuffs. Note that the set of $\cF$ in \eqref{zag}
is finite and independent of $n$.

\begin{proof}
For notational simplicity we consider the case of
a single multigraph $H$,
\ie,
$H_1=\dots=H_r=H$;
the proof for mixed cumulants is the same.

Consider one term in the sum in \eqref{zac}; it is given by
indicators $I_{\phi_1},\dots,I_{\phi_r}$ corresponding  to
$r$ copies $\hH_{\phi_1},\dots,\hH_{\phi_r}$ of $\hH$.
Let $\hF:=\bigcup_{i=1}^r\hH_{\phi_i}$.
We apply \refL{L1} with $Y_i:=I_{\phi_i}$.
Note that $\mue$ in \eqref{mue} equals $e(\hF)$.
If the graph $\gG$ in \refS{SX} has an edge $jk$, then $\hH_{\phi_k}$ and
$\hH_{\phi_j}$ have a common half-edge, and thus a common vertex (real or cuff);
hence $\hH_{\phi_k}$ and $\hH_{\phi_j}$ are subgraphs of the same component
of $\hF$.
This yields a surjective map from the blocks of $\gG$ to the components of
$\hF$, and thus the number of blocks $b$ in \refL{L1} satisfies $b\ge
\kkk(\hF)$.
Hence, \refL{L1} yields
\begin{align}\label{zad}
  \bigabs{\kk_r\bigpar{I_{\phi_1},\dots,I_{\phi_r}}}
\le C N^{1-\kkk(\hF)-e(\hF)}
\le C n^{1-\kkk(\hF)-e(\hF)}.
\end{align}

Now let us count the number of $\phi_1,\dots,\phi_r\in\Phi(\hH)$ that yield a
union $\hF$ isomorphic to some given bipartite multigraph $\cF$.
Each cuff in $\hF$ and its 2 half-edges may be chosen in $O(N)=O(n)$ ways.
Let $w$ be a real vertex in $\cF$, and let $m:=d_{\cF}(w)$ be its degree.
Also, let $K:=\max_{i\in H}d_H(i)<\infty$.
We consider by definition only copies of $\hH$ such that each vertex $v$
has degree equal to the corresponding vertex in $H$, and thus $d_{\hG}(v)\le
K$. Hence, 
for each such choice of $v$ corresponding to $w\in \cF$, we have at most
$K^m$ choices of the $m$ half-edges incident to it; hence the number of
choices of $v$ and its $m$ half-edges is $O(n K^m)=O(n)$.

Consequently,  each vertex (real or cuff) in $\cF$ gives
$O(n)$ choices of corresponding vertex and half-edges in  $\hF$.
Hence, the number of $\hF$ isomorphic to a given $\cF$ is $O\bigpar{n^{v(\cF)}}$.
Finally, given $\hF$, we can choose $\hH_1,\dots,\hH_r$ in $O(1)$ ways.
Hence, the total number of terms in \eqref{zac} corresponding to a given $\cF$
is $O\bigpar{n^{v(\cF)}}$, and by \eqref{zad},
their total contribution is $O\bigpar{n^{v(\cF)-e(\cF)-\kkk(\cF)+1}}$.
This yields \eqref{zag}.
\end{proof}

\begin{lemma}\label{LZ1}
  Let $F:=\bigcup_{i=1}^r H_i$, where
$H_1,\dots,H_r$ is a sequence of connected multigraphs.
\begin{romenumerate}
\item \label{LZ1a}
Then $v(F)\le e(F)+\kkk(F)$.
\item \label{LZ1b}
If furthermore at least one $H_i$ is not a tree, then
$v(F)\le e(F)+\kkk(F)-1$.
\end{romenumerate}
\end{lemma}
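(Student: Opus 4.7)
The plan is to reduce both claims to the standard fact that for any multigraph $G$, the cyclomatic number
\begin{align*}
  \mu(G) := e(G) - v(G) + \kkk(G)
\end{align*}
satisfies $\mu(G) \ge 0$, with equality if and only if $G$ is a forest. This can be proved by induction on $e(G)$: the base case $e(G)=0$ gives $\mu(G)=0$ and $G$ is a forest; for the inductive step, removing any edge $e$ of $G$ decreases $e(G)$ by $1$ and either leaves $\kkk(G)$ unchanged (if $e$ lies on a cycle, including loops as length-$1$ cycles and parallel edges as length-$2$ cycles) or increases $\kkk(G)$ by $1$ (if $e$ is a bridge), so $\mu$ either drops by $1$ or stays the same, while forest-ness is preserved exactly in the bridge case.

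For part \ref{LZ1a}, I would simply apply $\mu(F) \ge 0$ to the multigraph $F$; this is precisely $v(F) \le e(F) + \kkk(F)$.

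For part \ref{LZ1b}, the target inequality $v(F) \le e(F) + \kkk(F) - 1$ is equivalent to $\mu(F) \ge 1$, i.e., to $F$ containing at least one cycle. So it suffices to exhibit a cycle in $F$. If some $H_i$ is not a tree, then as a connected multigraph it satisfies $e(H_i) \ge v(H_i)$ (since a connected multigraph always has $e \ge v-1$, with equality iff it is a tree), hence $\mu(H_i) \ge 1$, so $H_i$ itself contains a cycle. Since $H_i$ is a submultigraph of $F = \bigcup_j H_j$, this cycle persists as a cycle in $F$, completing the argument.

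I do not anticipate any genuine obstacle: the proof is entirely elementary graph theory. The only mild point to be careful about is the multigraph convention — ``cycle'' must be interpreted to include loops and parallel edges, and ``tree'' to exclude them — but this matches the identity $e(H)=v(H)-1$ for trees used throughout the paper (\eg{} in the statement of \refT{T1}).
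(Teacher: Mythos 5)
Your proof is correct, and it takes a genuinely different and more economical route than the paper. The paper argues directly: it reduces to the case $\kkk(F)=1$, reorders the $H_i$ so that each partial union $\bigcup_{i\le k}H_i$ is connected, chooses a spanning tree $T_k\subseteq H_k$ for $k\ge2$, and tracks $v-e$ through the sequence $F_k:=H_1\cup\bigcup_{i\le k}T_i$, using that each $T_k\cap F_{k-1}$ is a nonempty forest to show $v(F_k)-e(F_k)$ is non-increasing in $k$; it then finishes by distinguishing whether $H_1$ is a tree. Your argument sidesteps the reordering, the spanning-tree extraction, and the step-by-step bookkeeping entirely: you observe that both claims are just statements about the cyclomatic number $\mu(F)=e(F)-v(F)+\kkk(F)$, which is $\ge0$ for any multigraph and $\ge1$ as soon as $F$ contains a single cycle. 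For \ref{LZ1b}, a non-tree $H_i$ contains a cycle (since it is connected with $e\ge v$), and that cycle persists in $F\supseteq H_i$, so $\mu(F)\ge1$. The hypothesis that $F$ is a union of \emph{connected} graphs is then used only in \ref{LZ1b}, via $e(H_i)\ge v(H_i)$; for \ref{LZ1a} it is not needed at all, which your argument makes visible while the paper's does not. The paper's more hands-on decomposition is presumably chosen because the same structure (partial unions, intersections with spanning trees) is reused in the proof of the companion Lemma~\ref{LZ2}, where the cleaner global inequality no longer suffices; but for Lemma~\ref{LZ1} on its own, yours is the shorter and more transparent proof.
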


\begin{proof}
Consider first the case when $F$ is connected.
We may then reorder $H_1,\dots,H_r$ such that 
each $\bigcup_{i=1}^k H_i$ is connected;
furthermore, we may do this starting with any of the multigraphs as $H_1$,
and in \ref{LZ1b} we may thus assume that $H_1$ is not a tree.

Now, choose for each $i\ge2$ a spanning tree $T_k\subseteq H_k$, and 
define 
$F_k:=H_1\cup\bigcup_{i=2}^k T_i$, $k\ge1$. Thus each $F_k$ is connected.

Let $k>1$ and let $T_k':=T_k\cap F_{k-1}$. Since $T_k'$ is a subgraph of
$T_k$, which is a tree, we see that $T_k'$ is a forest, and 
since $F_k=T_k\cup F_{k-1}$ is connected, 
$T_k'$ is not empty. Hence, $v(T_k')\ge e(T_k')+1$.
Consequently, since $F_k=T_k\cup F_{k-1}$,
\begin{align}\label{ly1}
  v(F_k)-e(F_k)
&=v(F_{k-1})+v(T_k)-v( T_k')
-\bigpar{e(F_{k-1})+e(T_k)-e(T_k')}
\notag\\&
\le v(F_{k-1})-e(F_{k-1})+v(T_k)-e( T_k)-1
\notag\\&
= v(F_{k-1})-e(F_{k-1})
\end{align}
when $k>1$. 
Hence, by induction $v(F_k)-e(F_k)\le v(H_1)-e(H_1)$.
Furthermore, $F_r$ is a spanning subgraph of $F$, and thus
$v(F_r)=v(F)$ and $e(F_r)\le e(F)$. Consequently,
\begin{align}\label{ly2}
v(F)-e(F)\le v(F_r)-e(F_r)\le v(H_1)-e(H_1).  
\end{align}
Moreover, $e(H_1)\ge v(H_1)-1$ since $H_1$ is connected, and if $H_1$ is not
a tree, then $e(H_1)\ge v(H_1)$. 
Consequently, \eqref{ly2} yields \ref{LZ1a} and \ref{LZ1b} in the case
$\kkk(F)=1$.

If $\kkk(F)>1$, \ie, $F$ is disconnected, denote the components of $F$ by
$F_i$, $i=1,\dots,\kkk(F)$.
Then, by what just has been shown, $v(F_i)\le e(F_i)+1$, and if some
$H_j\subseteq F_i$ is not a tree, then $v(F_i)\le e(F_i)$.
Hence, the result fullows by summing over all components $F_i$.
\end{proof}

\begin{lemma}
  \label{LL2}
Assume \refAA.
Let $H_1,\dots,H_r$ be a sequence of connected multigraphs.
Then
\begin{align}\label{zag1}
\kk\bigpar{\bZZ_{H_1},\dots,\bZZ_{H_r}}
=O(n).
\end{align}
Furthermore, if at least one $H_i$ is not a tree, then
\begin{align}\label{zag0}
\kk\bigpar{\bZZ_{H_1},\dots,\bZZ_{H_r}}
=O(1).
\end{align}
\end{lemma}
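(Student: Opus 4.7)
The proof should follow almost immediately from the two preceding lemmas, so my plan is essentially to unpack the bound of \refL{LL1} using the combinatorial estimate of \refL{LZ1}.

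First I would apply \refL{LL1}, which gives
\begin{align*}
\bigabs{\kk\bigpar{\bZZ_{H_1},\dots,\bZZ_{H_r}}}
\le C\sum_{\cF} n^{v(\cF)-e(\cF)-\kkk(\cF)+1},
\end{align*}
where the finite sum is over (isomorphism classes of) bipartite multigraphs $\cF$ which can be written as $\bigcup_{i=1}^r \hH_i'$ with $\hH_i'\cong\hH_i$. Since the index set is finite and independent of $n$, it suffices to bound the exponent $v(\cF)-e(\cF)-\kkk(\cF)+1$ uniformly by $1$ in general, and by $0$ when some $H_i$ is not a tree.

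Next I would observe that each $\hH_i$ is connected (subdivision of a connected multigraph is connected), so $\cF$ is a union of $r$ connected multigraphs in the sense of \refL{LZ1}. Applying \refL{LZ1}\ref{LZ1a} to this union yields $v(\cF)\le e(\cF)+\kkk(\cF)$, whence the exponent is at most $1$ and we get \eqref{zag1}.

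For \eqref{zag0}, I would use the elementary fact that $\hH_i$ is a tree if and only if $H_i$ is a tree: subdividing every edge preserves the cycle space (up to lengthening cycles), so $\hH_i$ contains a cycle precisely when $H_i$ does. Hence if at least one $H_i$ is not a tree, then at least one $\hH_i$ is not a tree, and \refL{LZ1}\ref{LZ1b} gives $v(\cF)\le e(\cF)+\kkk(\cF)-1$ for every $\cF$ appearing in the sum, so the exponent is at most $0$ and the whole sum is $O(1)$.

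There is really no obstacle here beyond checking the two small observations just used (connectedness and the tree/non-tree status being preserved under edge subdivision); the genuine combinatorial work was done in \refL{LL1} and \refL{LZ1}. The only thing to keep in mind is that the sum over $\cF$ in \refL{LL1} is finite with a bound depending only on $r$ and the $H_i$'s, which is why absorbing it into the constant $C$ is legitimate.
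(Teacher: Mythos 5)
Your proof is correct and matches the paper's argument, which simply states that the lemma is an immediate consequence of Lemmas~\ref{LL1} and~\ref{LZ1} applied to $\hH_1,\dots,\hH_r$. The two small observations you verify (that subdivision preserves connectedness and tree/non-tree status) are exactly what makes that application legitimate.
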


\begin{proof}
  An immediate consequence of \refLs{LL1} and \ref{LZ1}, applying the latter
  to $\hH_1,\dots,\hH_r$.
\end{proof}

\begin{proof}[Proof of \refT{T1}]
Define 
\begin{align}\label{XH}
  X_H:=
  \begin{cases}
    \bigpar{\ubZZ_H-\E\ubZZ_H}/n\qq,& \text{$H$ is a tree},\\
X_H:=\ubZZ_H,& \text{$H$ has a cycle}. 
  \end{cases}
\end{align}

First, \ref{T1v}, the case when $e(H)>v(H)$, is easy.
By \refL{LM}\ref{LMd} and Markov's inequality, 
$\P(X_H\neq 0)\le \E X_H\to0$.
(Recall that $X_H$ is a non-negative integer.)
In particular, $X_H\pto0$.
Furthermore, \eqref{zag0} shows that every cumulant $\kk_r(X_H)=O(1)$, and
thus every moment is bounded by \ref{KKsum2}.
This implies uniform integrability of every power, and thus 
$\E X_H^r\to0$ for every $r$. Convergence (to 0) of joint moments with other
$X_H$  follows by the \CSineq{} when we have shown convergence of moments
also in \ref{T1t} and \ref{T1u}.

For \ref{T1t} and \ref{T1u}, and joint convergence of both, we use the
method of moments, in the cumulant version.
Let $\tX_H$ be random variables defined for (unlabelled) trees
and unicyclic multigraphs $H$ to have the claimed joint limit distribution:
\begin{romenumerate}
\item For trees $T$,
 $\tX_T$ have a joint
normal distribution with $\E X_T=0$ and
$\Cov\bigpar{\tX_{T_1},\tX_{T_2}}=\gs_{T_1,T_2}$.
\item \label{lpu}
For unicyclic $F$, 
$\tX_F\sim \Po(\gl_F)$ with $\tX_F$ independent of all other $\tX_H$.
\end{romenumerate}
We then claim that for any $r\ge1$ and trees or connected unicyclic
multigraphs $H_1,\dots, H_r$,
\begin{align}\label{kklim}
  \kk\bigpar{X_{H_1},\dots,X_{H_r}} \to   \kk\bigpar{\tX_{H_1},\dots,\tX_{H_r}}.
\end{align}
Indeed, this implies by \ref{KKsum2} convergence of all mixed moments.
Furthermore, normal and Poisson distributions have finite \mgf{s}, 
and thus the joint distribution of any finite number of $\tX_H$ is
determined by  its mixed moments. Hence, \eqref{kklim} implies convergence
\begin{align}\label{dlim}
\bigpar{X_{H_1},\dots,X_{H_r}} \dto   \bigpar{\tX_{H_1},\dots,\tX_{H_r}}
\end{align}
as claimed in the theorem. Furthermore, all moments converge, as just seen.

Note that the \rhs{} in \eqref{kklim} vanishes except in the two cases
$r\ge1$ and all $H_i$ are the same unicyclic multigraph, or
$r=2$ and both $H_1$ and $H_2$ are trees. This follows by the independence
assumption in \ref{lpu} above together with \ref{KK0}, and the
fact that all mixed cumulants of order $r\ge3$ vanish for joint normal
distributions. 

In order to show \eqref{kklim}, we consider three cases.

\pfcase{Every $H_i$ is a tree.}\label{pfc-tree}
Consider three subcases.
First, if $r=1$, then 
\begin{align}
  \kk\bigpar{X_{H_1}} = \E X_{H_1} =0 =\kk\bigpar{\tX_{H_1}}.
\end{align}

Secondly, if $r=2$, then \eqref{XH} and \eqref{lmb} yield
\begin{align}
  \kk\bigpar{X_{H_1},X_{H_2}} 
&=\Cov\bigpar{X_{H_1},X_{H_2}} 
=n\qw \Cov\bigpar{\ubZZ_{H_1},\ubZZ_{H_2}} 
\notag\\&
\to \gs_{H_1,H_2}
=\kk\bigpar{\tX_{H_1},\tX_{H_2}} .
\end{align}
Thirdly, if $r\ge3$, then \eqref{zag1} in \refL{LL2} together with
\eqref{uZ} and \eqref{XH} yields
\begin{align}\label{ibam}
   \kk\bigpar{X_{H_1},\dots,X_{H_r}} 
= C n^{-r/2}   \kk\bigpar{\bZZ_{H_1},\dots,\bZZ_{H_r}} 
= O\bigpar{n^{1-r/2}} \to0.
\end{align}
This verifies \eqref{kklim} in each of the three subcases.

\pfcase{Every $H_i$ is unicyclic.}\label{pfc-uni}
This case is really nothing new. 
By Lemma \ref{LM}\ref{LMc}, we have convergence
of all mixed factorial moments to the corresponding moments of
$\tX_{H_1},\dots,\tX_{H_r}$, which by well-known 
algebraic identities is equivalent to convergence of all mixed moments, 
and thus to convergence of all mixed cumulants.
(See \cite[Section 6.1]{JLR} and \ref{KKsum}--\ref{KKsum2}.)

\pfcase{At least one $H_i$ is a tree and at least one is unicyclic.}
Suppose that there are $\ell\ge1$ trees (not necessarily distinct) among
$H_1,\dots,H_r$. Then,
by \eqref{zag0} in \refL{LL2} together with
\eqref{uZ} and \eqref{XH},
\begin{align}\label{waz}
   \kk\bigpar{X_{H_1},\dots,X_{H_r}} 
= C n^{-\ell/2}   \kk\bigpar{\bZZ_{H_1},\dots,\bZZ_{H_r}} 
= O\bigpar{n^{-\ell/2}} \to0.
\end{align}
Hence, \eqref{kklim} holds in this case too.

This completes the verification of \eqref{kklim} in all cases; as said
above, this proves \eqref{dlim}, with convergence of all moments.

Finally, \refL{LM} shows \eqref{glh},
\refL{LM1.5} shows \eqref{gl=p},
\refL{LM2} shows the first equivalence in \eqref{t1a}, and the
second is obvious from \eqref{glh}. \refL{LM2} yields also the final claim
on non-singularity of the covariance matrix $\gS=\bigpar{\gs_{H_i,H_j}}$.
\end{proof}

To show \refT{T2} for the simple random graph $\gndd$, 
we combine \refT{T1} with asymptotics of the counts 
$\uZZsC1$ and $\uZZsC2$
of loops and double edges.
Note that $\uZZ_{\sC_j}$ counts all occurences of $\sC_j$, which in general
differs from $\ubZZ_{\sC_j}$,  the number of isolated occurences. The latter
is already included in \refT{T1}, but $\uZZ_{\sC_j}$ requires an extra
argument.

$\ZZsC1$ and $\ZZsC2$ can be expressed as sums \eqref{zaa} for sets
$\Phi$ of data $\phi$ as above, with the difference that in \ref{phi1}
above, we omit the condition on the degree $d_{\nu(i)}$. 

We may just as well consider a somewhat more general situation:
we say that a {\emph \mmg} 
$\tH=\bigpar{H,(\DDX_w)_{w\in H}}$  
is a multigraph $H$ where each vertex $v$ has a mark 
$\DDX_v\in\set{\text{\emph{bound}, \emph{free}}}$.
We then define $Z_{\tH}(G)$ as the number of labelled copies of $H$ in $G$
such that each \bound{}  vertex $w\in H$ corresponds to a vertex $v$ in $G$ with
the same degree $d_G(v)=d_H(w)$ (while there is no restriction for a free
vertex). Hence, if all vertices in $H$ are free, $Z_{\tH}=Z_H$,
and if all vertices are bound, $Z_{\tH}=\bZ_H$.
We write, as in the unmarked case, $Z_{\tH}:=Z_{\tH}(\gndd)$ and
$\ZZ_{\tH}:=Z_{\tH}(\ggndd)$.

Assume that $V(H)=[h]$ for
convenience.
Then $\ZZ_{\tH}$ is given by \eqref{zaa} for a set $\Phi(\tH)$ of $\phi$
defined by
\ref{phi1}--\ref{phi2}, but with the degree condition in \ref{phi1} omitted
for free vertices.
Let cuffs in subdivided multigraphs be \bound{} by default.
Furthermore,  define $\bigcup_i\tH_i$, where $\tH_i$ are \mmg{s}, 
by taking the union and marking a vertex
as \bound{}
if it is \bound{} in some $\tH_i$, and 
free otherwise.

We extend \refL{LL1} as follows.

\begin{lemma}
  \label{LL11}
Assume \refAA.
Let ${\tH}_1,\dots,{\tH}_r$ be a sequence of connected \mmg{s}.
Then
\begin{align}\label{zagfree}
\bigabs{\kk\bigpar{\ZZ_{{\tH}_1},\dots,\ZZ_{{\tH}_r}}}
\le C\sum_{\cF} n^{v(\cF)-e(\cF)-\kkk(\cF)+1}
\prod_{w\in\cF \text{ \rm is free}} \E D_n^{d_{\cF}(w)}
\end{align}
where we sum over all unlabelled bipartite multigraphs $\cF$ that can be
written as a 
union $\bigcup_{i=1}^r \hH_i'$ where $\hH_i'\cong \htH_i$ and
$\htH_i$ is obtained from $\tH_i$ by subdividing each edge into two
as above.
\end{lemma}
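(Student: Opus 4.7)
The plan is to run the proof of \refL{LL1} essentially verbatim, modifying only the step that counts the number of data $(\phi_1,\dots,\phi_r)$ producing a given isomorphism class $\cF$ of the union $\hF:=\bigcup_{i=1}^r\hH_{\phi_i}$. The novelty is that free vertices no longer carry a uniform degree bound, so their contribution must be expressed through moments of $D_n$ rather than a constant.

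First, I would write $\ZZ_{\tH_i}$ as in \eqref{zaa} summing over the set $\Phi(\tH_i)$ defined by \ref{phi1}--\ref{phi2} but with the degree constraint in \ref{phi1} dropped at free vertices, and expand the mixed cumulant as in \eqref{zac}. Each $I_{\phi_i}$ is still a product of $2e(\tH_i)$ indicators of the form $\indic{\pi(\ga)=\gb}$, so \refL{L1} applies without change and yields
\begin{equation*}
\bigabs{\kk(I_{\phi_1},\dots,I_{\phi_r})}=O\bigpar{n^{1-\kkk(\cF)-e(\cF)}}
\end{equation*}
whenever $\hF$ has isomorphism class $\cF$, exactly as in \eqref{zad}.

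The counting of configurations is where the free/bound distinction enters, and this is really the only nontrivial part of the argument. Each cuff in $\cF$ is bound by default and gives $O(n)$ choices as in \refL{LL1}. A bound real vertex $w\in\cF$ must be matched to a real vertex of $\ggndd$ of some specific fixed degree (prescribed by whichever $\tH_i$ makes $w$ bound), producing $O(n)$ choices of the vertex and $O(1)$ choices of the $d_\cF(w)$ half-edges at it. For a free real vertex $w\in\cF$ of degree $m:=d_\cF(w)$, however, any $v_i$ may be used and there are $(d_i)_m\le d_i^m$ ordered choices of $m$ half-edges at $v_i$; summing over $i$ yields $\sum_{i=1}^n d_i^m=n\,\E D_n^m$ choices. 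Multiplying these bounds over all vertices of $\cF$, the number of $(\phi_1,\dots,\phi_r)$ mapping onto a given $\cF$ is
\begin{equation*}
O\Bigpar{n^{v(\cF)}\prod_{w\in\cF\text{ free}}\E D_n^{d_\cF(w)}},
\end{equation*}
and combining with the cumulant bound above, together with summation over the finitely many possible $\cF$, yields \eqref{zagfree}. The only point requiring care in this bookkeeping is the gluing convention for marks: when a free vertex of some $\tH_i$ and a bound vertex of some $\tH_j$ are identified at a common real vertex of $\cF$, the combined vertex is marked bound in $\cF$ and inherits the fixed degree from $\tH_j$, so the $O(n)$ estimate for bound vertices still applies and no extra moment factor is needed at that vertex.
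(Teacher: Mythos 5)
Your proposal is correct and follows essentially the same route as the paper's proof: run the proof of Lemma~\ref{LL1} unchanged except at the configuration-counting step, where bound vertices still give $O(n)$ choices while a free vertex $w$ of degree $m$ contributes $\sum_i d_i^m = n\,\E D_n^m$, yielding the extra moment factor. Your closing remark on the gluing convention for marks matches the convention the paper fixes just before the lemma statement, so that detail is also handled correctly.
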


\begin{proof}
We follow the proof of \refL{LL1}.
The only difference is when counting the number of ways we can choose a copy $v$
of a vertex $w\in \cF$ and its $m:=d_{\cF}(w)$ half-edges. 
If $w$ is \bound, then as in the proof of \refL{LL1},
$d_{\hG}(v)\le K$ for some $K<\infty$, 
and then the $m$ half-edges can be chosen in at most
$K^m=O(1)$ ways for each $v$, giving as before $O(n)$ choices of vertex and
half-edges. 

On the other hand, if $w$ is free, then we  bound for each real vertex 
$v=v_i\in V(\hG)$ 
the number of choices of the $m$ half-edges by $d_{\hG}(v_i)^m=d_i^m$;
hence, the
total number of choices of $v$ and its half-edges is at most
\begin{align}\label{coco}
\sum_{i=1}^n d_i^m = n \E D_n^m   = n \E D_n^{d_{\cF}(w)}. 
\end{align}
This gives an additional factor $\E D_n^{d_{\cF}(w)}$ for each free $w$, and
thus \eqref{zagfree} instead of \eqref{zag}.
\end{proof}

\begin{lemma}\label{LZ2}
  Let $F=F'\cup F''$ with
$F'=\bigcup_{i=1}^r H_i$, where $r\ge0$ and
$H_1,\dots,H_r$ is a sequence of connected multigraphs,
and $F''=\bigcup_{i=1}^t C_i$, where $t\ge1$ and
each $C_i$ is a cycle
$\sC_\ell$, $\ell\ge1$.
Say that a vertex $w\in V(F'')\setminus V(F')$ is free, and let
\begin{align}\label{sF}
  s(F):=\frac12\sum_{w \text{ \rm free}}\bigpar{d_F(w)-2}_+.
\end{align}
Then
$v(F)+s(F)\le e(F)+\kkk(F)-1$.
\end{lemma}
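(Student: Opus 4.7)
My plan is to rewrite the claim equivalently as the lower bound
$e(F) - v(F) + \kkk(F) \ge s(F) + 1$
on the cyclomatic number of $F$, and then prove it by comparing $F$ with its cyclic part $F''$.

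First I would establish monotonicity: $e(G) - v(G) + \kkk(G)$ is non-decreasing under sub-multigraph inclusion. This follows by a one-vertex/one-edge check (adding an isolated vertex contributes $-1+1=0$; adding an edge between two components contributes $1-1=0$; adding an edge inside a component adds $+1$; adding a pendant vertex-edge pair contributes $1-1+0=0$). Applying this to $F''\subseteq F$ gives
\[
  e(F) - v(F) + \kkk(F) \;\ge\; e(F'') - v(F'') + \kkk(F'').
\]

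Second, every $w\in V(F'')$ lies in some cycle $C_j$ where it has degree $2$, so $d_{F''}(w)\ge 2$; hence $(d_{F''}(w)-2)_+ = d_{F''}(w)-2$, and the handshake lemma gives
\[
  2\bigpar{e(F'') - v(F'')} \;=\; \sum_{w\in V(F'')}\bigpar{d_{F''}(w) - 2}_+.
\]
Combined with $\kkk(F'')\ge 1$ (as $t\ge1$), this yields
\[
  e(F'') - v(F'') + \kkk(F'') \;\ge\; 1 + \tfrac12\sum_{w\in V(F'')}\bigpar{d_{F''}(w) - 2}_+.
\]

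Third, I would match this sum to $s(F)$. Since a free vertex $w$ satisfies $w\notin V(F')$, no edge of $F'$ is incident to it, so $d_F(w) = d_{F''}(w)$ exactly. Consequently
\[
  s(F) \;=\; \tfrac12\sum_{w\in V(F'')\setminus V(F')}\bigpar{d_{F''}(w) - 2}_+ \;\le\; \tfrac12\sum_{w\in V(F'')}\bigpar{d_{F''}(w) - 2}_+,
\]
and chaining the three inequalities produces the desired conclusion.

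The only point that needs any care is the identity $d_F(w) = d_{F''}(w)$ for free $w$: because the $H_i$'s and $C_j$'s may share edges in the ambient multigraph in arbitrary ways, one must observe that the condition $w\notin V(F')$ rules out any such sharing contributing to the $F$-degree at $w$. Beyond that, the argument reduces to monotonicity of the cyclomatic number and the handshake lemma, so I do not anticipate any real obstacle.
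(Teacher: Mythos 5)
Your proof is correct, but it takes a genuinely different route from the paper's. The paper proves the lemma by induction on $t$: the base case $t=1$ invokes Lemma~\ref{LZ1}\ref{LZ1b}, and the inductive step adds one cycle $C_{t+1}$ at a time and tracks, via a three-way case analysis (disjoint, vertex-sharing only, or edge-sharing), how $v,e,\kkk,s$ change. Your argument instead goes directly through the cyclomatic number: you observe that $e-v+\kkk$ is monotone under sub-multigraph inclusion (a standard property of the first Betti number, which you verify by a one-edge/one-vertex check), apply the handshake lemma to $F''$ using the key fact that every vertex of a union of cycles has degree at least $2$, and then compare $s(F)$ against the resulting sum by noting $d_F(w)=d_{F''}(w)$ for free $w$. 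This is shorter and more structural, it avoids induction and the case analysis entirely, and it makes no use of Lemma~\ref{LZ1} (whereas the paper's proof relies on part (ii) of it for the base case). The paper's inductive version has the advantage of matching the style of the preceding Lemma~\ref{LZ1} and making the increment $\gD S\le0$ per cycle visible, but your route isolates the conceptual reason the bound holds --- each free vertex's excess degree $(d-2)_+$ is accounted for by the cycle rank of $F''$, which can only go up when passing to the supergraph $F$. One small point worth spelling out when writing it up: the identity $d_F(w)=d_{F''}(w)$ uses that $F$ is formed as a union of subgraphs of a common ambient graph (so shared edges are identified), and that no edge of $F'$ can be incident to a free $w$ since both its endpoints would lie in $V(F')$; you flag this correctly.
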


\begin{proof}
Let $S(F):=v(F)+s(F)-e(F)-\kkk(F)$; thus the result is 
$S(F)\le-1$. 

We use induction on $t$. 
If $t=1$, then each free vertex $w$ has degree $d_F(w)=d_{C_1}(w)=2$, 
and thus $s(F)=0$.
Hence, the result follows from \refL{LZ1}\ref{LZ1b} applied to
$\set{H_i}\cup\set{C_1}$.

Now suppose that the results hold for some $t$, 
and add another cycle
$C_{t+1}$, of length $\ell$, say. Denote the old $F$ by $F_t$, so $F=F_t\cup
C_{t+1}$.
We consider the changes in the quantities $v(F), e(F),\kkk(F),s(F), S(F)$, which
we denote by $\gD v, \gD e$, and so on. We treat three cases separately.

(i). If the new cycle $C_{t+1}$ is disjoint from $F_t$, then
$\gD v=\ell$, $\gD e=\ell$, $\gD\kkk=1$ and $\gD s=0$; the latter since all
new vertices have degree 2 and thus do not contribute to $s$.
Hence, $\gD S=-1$.

(ii). Suppose that $C_{t+1}$ is edge-disjoint but not vertex-disjoint from
 $F_t$. Then $\gD e = \ell$ and $\gD\kkk= 0$.
Each vertex in $C_{t+1}\setminus F_t$ contributes 1 to $\gD v$ and 0 to
$\gD s$, 
while each vertex in $C_{t+1}\cap F_t$ contributes 0 to $\gD v$ and at most 
1 to $\gD s$, since the new cycle increases the degree by 2. (If the vertex
is not free in $F_t$, the contribution to $\gD s$ is 0.)
Hence, each vertex contributes at most 1 to $\gD(v+s)$, and thus
$\gD(v+s)\le \ell=\gD e$. Consequently, $\gD S\le 0$.

(iii). Suppose that $C_{t+1}$ is has some edge in common with $F_t$. 
Then $\gD\kkk=0$. The edges in $E(C_{t+1})\setminus E(F_t)$ form $k\ge0$
disjoint paths $P_j$. Suppose that $P_j$ contains $\ell_j\ge1$ edges. Then $P_j$
contributes $\ell_j$ to $\gD e$, the $\ell_j-1$ internal vertices in $P_j$
contribute $1$ each to $\gD v$ and the two endpoints of $P_j$ contribute 
at most $1/2$ each to $\gD s$. There are no other contributions, and thus
$\gD (v+s)\le\sum_j\ell_j=\gD e$ and consequently $\gD S\le 0$.

We have shown that $\gD S\le0$ in all cases, which completes the induction.
\end{proof}

\begin{lemma}\label{LLJ}
  Assume \refAAA.
Let $H_1,\dots,H_r$ be trees or connected unicyclic simple graphs.
Then, the joint limits in distribution in \refT{T1}\ref{T1t}\ref{T1u}
hold jointly with
\begin{align}
  \uZZ_{\sC_1}&\dto\Po\bigpar{\glscx1}, \label{llj1} 
\\
 \uZZ_{\sC_2}&\dto\Po\bigpar{\glscx2},
\label{llj2} 
\end{align}
with $\glscx1$ and $\glscx2$ given by \eqref{glscx},
and with the limits in \eqref{llj1} and \eqref{llj2} independent of each
other and of the limits for $\ubZZ_{H_i}$ in \refT{T1}\ref{T1t}\ref{T1u}.
\end{lemma}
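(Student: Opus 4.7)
My plan is to apply the method of moments in its cumulant form, extending the argument from the proof of \refT{T1} to incorporate the new statistics $\uZZ_{\sC_1},\uZZ_{\sC_2}$. Set $X_T := (\ubZZ_T - \E\ubZZ_T)/\sqrt n$ for trees, $X_H := \ubZZ_H$ for connected unicyclic simple $H$, and $X_{\sC_j} := \uZZ_{\sC_j}$. Since the prospective joint limit (normal for the trees, independent Poissons for the rest) is determined by its moments, it will suffice to prove that every mixed cumulant of these variables converges to the corresponding cumulant of the prescribed limit.

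I would split the analysis into three regimes. Mixed cumulants involving only the tree and isolated unicyclic variables are already treated in the proof of \refT{T1}. Mixed cumulants involving only variables from $\{\uZZ_{\sC_1},\uZZ_{\sC_2}\}\cup\{\ubZZ_{H_i}:H_i \text{ unicyclic}\}$ follow from \refL{LM3}: the stated convergence of mixed factorial moments to $\glscx1^{s_1}\glscx2^{s_2}\prod\gl_{H_i}^{r_i}$ is the defining condition for joint convergence to mutually independent Poissons and, because Poisson laws are moment-determinate, yields convergence of all the associated mixed cumulants (with the usual vanishing pattern between distinct variables). The remaining essential case is a mixed cumulant containing at least one tree factor and at least one $\sC_j$ factor.

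For this third case, apply \refL{LL11} with the trees and unicyclic $H_i$'s marked \bound{} and the cycles $\sC_j$ marked free, bounding the unnormalized mixed cumulant by
\begin{equation*}
  C\sum_{\cF} n^{v(\cF)-e(\cF)-\kkk(\cF)+1}\prod_{w\in\cF\text{ free}} \E D_n^{d_\cF(w)}.
\end{equation*}
Every admissible $\cF$ contains a subdivided $\sC_j$, so \refL{LZ2} applies and gives $v(\cF)-e(\cF)-\kkk(\cF)+1 \le -s(\cF)$. Each free real vertex $w$ of $\cF$ sits only on subdivided cycles and so has $\cF$-degree of the form $2+2k_w$ with $k_w\ge 0$, contributing exactly $k_w$ to $s(\cF)$. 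Combining \ref{AD2} with \eqref{dmaxo} yields
\begin{equation*}
  \E D_n^{2+2k_w}\le \dmax^{2k_w}\,\E D_n^2 = o(n^{k_w}),
\end{equation*}
which is absorbed (with room to spare) by the $n^{-k_w}$ share of the factor $n^{-s(\cF)}$. Hence the unnormalized cumulant is $O(1)$, and dividing by the $n^{\ell/2}$ produced by the $\ell\ge 1$ tree-variable normalizations leaves $O(n^{-\ell/2})\to 0$.

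The main obstacle is precisely this third step: matching the combinatorial gain $n^{-s(\cF)}$ from \refL{LZ2} against the moment factors $\E D_n^{d_\cF(w)}$ at the free vertices, using only the second-moment assumption \ref{AD2} together with $\dmax = o(n\qq)$. Once that estimate is in place, the three regimes combine to show convergence of every mixed cumulant to the cumulant of the required limit law, in which the tree vector is jointly normal as in \refT{T1}, the pair $(\uZZ_{\sC_1},\uZZ_{\sC_2})$ is a pair of independent Poissons with the parameters given by \eqref{glscx}, and both families are independent of one another and of the isolated unicyclic Poisson limits of \refT{T1}.
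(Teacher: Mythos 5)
Your proposal follows the same three-case cumulant decomposition and the same key estimates as the paper's proof: Theorem T1's proof covers cumulants involving only $\ubZZ_{H_i}$; Lemma LM3 handles cumulants involving only $\uZZ_{\sC_j}$ and isolated unicyclic counts via mixed factorial moments; and for the mixed tree/$\sC_j$ case you invoke Lemma LL11 with cycles free, pair $s(\cF)$ from Lemma LZ2 against $\E D_n^{d_\cF(w)}=O(n^{(d_\cF(w)-2)/2})$, and normalize by $n^{\ell/2}$. This is precisely what the paper does. (One small imprecision: a free vertex in $\cF$ need not have even degree, since distinct $\hH_i'$ may share half-edges at a vertex; but the argument only requires $d_\cF(w)\ge 2$ and the general bound $\E D_n^d=O(n^{(d-2)/2})$, so nothing breaks.)
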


\begin{proof}
We extend the proof of \refT{T1}, and show that the convergence \eqref{kklim} of
joint cumulants holds also if we consider besides the variables $X_{H_i}$ also
$X_{\scx1}:=\uZZ_{\sC_1}$ and $X_{\scx2}:=\uZZ_{\sC_2}$ (possibly repeated several
times), and the corresponding $\tX_{\scx1}\sim\Po\xpar{\glscx1}$ and
$\tX_{\scx2}\sim\Po\xpar{\glscx2}$, independent of each other and all
$\tX_H$. We regard $\scx{j}$, $j=1,2$, 
as symbols used only to denote these  variables; they are
not any graphs; for
convenience we may say $H=\scx{j}$, but it should be interpreted in this 
formal sense.

We have in the proof of \refT{T1} proved \eqref{kklim} 
when there is no $\scx j$ among $H_1,\dots,H_r$, by considering three
different cases separately.
We now consider two further cases.

\pfcase{Some $H_i$ is a $\scx j$, and some $H_j$ is a tree.}
Let $\ell\ge1$ be the number of $H_i$ that are trees.
Regard each $H_i$ as a \mmg{} $\tH_i$; if  $H_i=\scx j$ 
we let every vertex be free, and otherwise we let every vertex be \bound.
By \eqref{uZ}, applied also to
$\uZZsC{j}$, we find similarly to \eqref{waz}
\begin{align}\label{mti}
   \kk\bigpar{X_{H_1},\dots,X_{H_r}} 
= C n^{-\ell/2}   \kk\bigpar{\ZZ_{\tH_1},\dots,\ZZ_{\tH_r}}, 
\end{align}
which we estimate by \eqref{zagfree}.
If $d\ge2$, then \ref{AD2} implies, through \eqref{dmaxo} and \eqref{EDn2},
\begin{align}\label{emu}
  \E D_n^d\le \dmax^{d-2}\E D_n^2= O\bigpar{n^{(d-2)/2}}.
\end{align}
Furthermore, if $\cF=\bigcup \hH_i'$ is as in \refL{LL11}, then a vertex
$w\in\cF$ is free if and 
only if it belongs to $\hH_i'$ only for  $H_i=\scx j$.
In particular, if $w\in\cF$ is free then $d_{\cF}(w)\ge2$.
Let $s(\cF)$ be as in \eqref{sF} with $F=\cF$, $F''$ the union of
$\hH_i'$ for $H_i=\scx j$ and $F'$ the union of the other $\hH_i$.
Then
\refL{LL11}, \eqref{emu}, \eqref{sF} and \refL{LZ2} yield
\begin{align}\label{zagga}
\bigabs{\kk\bigpar{\ZZ_{{\tH}_1},\dots,\ZZ_{{\tH}_r}}}
\le C\sum_{\cF} n^{v(\cF)-e(\cF)-\kkk(\cF)+1+s(\cF)}
\le C.
\end{align}
Consequently, \eqref{mti} yields
\begin{align}\label{wati}
   \kk\bigpar{X_{H_1},\dots,X_{H_r}} 
= O\bigpar{ n^{-\ell/2}}
\to0.
\end{align}
Hence, \eqref{kklim} holds in this case too.

\pfcase{Some $H_i$ is a $\scx j$, but no $H_j$ is a tree.}
This is similar to Case \ref{pfc-uni}.
\refL{LM3} shows convergence of all mixed factorial moments, which is
equivalent to convergence of all mixed moments and of all mixed cumulants.

This shows that \eqref{kklim} holds in all cases, which implies joint
convergence in distribution \eqref{dlim} as above.
\end{proof}

\begin{proof}[Proof of \refT{T2}]
\refT{T1}\ref{T1v} (multicyclic $H$) transfers immediately by \eqref{pg}.
Consider thus only trees and unicyclic $H$.

The joint convergence in distribution in \ref{T1t} and \ref{T1u} for $\gndd$ is
  an immediate consequence of \refL{LLJ} and conditioning on 
$\uZZ_{\sC_1}=\uZZ_{\sC_2}=0$. (We keep the normalization by $\E\ubZZ_H$ in
\eqref{t1t}.)

Furthermore, with $X_H$ as in \eqref{XH}, we have by \refT{T1} convergence
of every moment $\E X_H^r$, and thus $\E X_H^r=O(1)$.
Furthermore, \eqref{pg} holds  
(as said in \refR{Rmom}, and a consequence of \eqref{llj1}--\eqref{llj2}); 
hence for every even
integer $r$,
\begin{align}
  \E\bigpar{X_H^r\mid \uZZ_{\sC_1}=\uZZ_{\sC_2}=0}
\le \frac{\E X_H^r}{\P\bigpar{\uZZ_{\sC_1}=\uZZ_{\sC_2}=0}}
=O(1).
\end{align}
In other words, the conditioned random variables 
$\bigpar{X_H\mid \ggndd\text{ is simple}}$ have bounded moments of arbitrary
  order; 
hence the convergence in distribution of these variables just shown implies 
that moment convergence holds also for the conditioned variables, i.e., for
$\gndd$. Convergence of mixed moments follows by the same argument. 

In particular, this shows that for a tree $H$,
\begin{align}
\E\bigpar{X_H\mid \ggndd\text{ is simple}}=
  \frac{\E\ubZ_H-\E\ubZZ_H}{n\qq}\to0,  
\end{align}
which shows \eqref{unorm}, and completes the proof.
\end{proof}

\begin{proof}[Proof of \refT{T1xx}]
  \pfitemref{T1x}
  As the proof of \refT{T1} (only Case \ref{pfc-tree} is relevant), but using
  \refL{LL11} (with all vertices free) instead of \refL{LL2}; the extra
  factors in \eqref{zagfree} are all $O(1)$ by the assumption \ref{Am},
so this does not affect the rest of the proof.

  \pfitemref{T2x}
  As the proof of \refT{T2}, using again \refL{LL11} and \ref{Am} instead of
  \refL{LL2}. 
\end{proof}

\begin{remark}\label{Rmarked}
\refT{T1xx} extends, with the same proof, 
to \mmx{} trees $\tT$, where as
  above each vertex is marked as either \bound{} or free.
This includes \refTs{T1}, \ref{T2} and \ref{T1xx}, but also
mixed cases.
More generally, we may consider \mmg{s} where each vertex $w$ is marked with
a set
$\DD_w\subseteq\bbN$ of allowed degrees; we count only copies such that
each $w$ corresponds to a vertex $v\in G$ with $d_G(v)\in\DD_w$.
For example, we may count edges such that one endpoint has prime degree and
the other is a leaf.
The proofs above hold for this case too; we say that a vertex $w$ is bound
if $\DD_w$ is finite, and free otherwise.
If some vertex is free we assume \ref{Am} as in \refT{T1xx};
if all vertices are \bound, the assumptions \refAA{} or \refAAA{} in
\refTs{T1} and \ref{T2} are enough.

%
\end{remark}

\section{Means and variances}\label{Smean}

In this section we prove 
\refLs{LM}--\ref{LM3} used in the proofs in \refS{SY}.
We will here use the standard version of the configuration model, recalled
at the beginning of \refS{Sconfig}; we thus use the half-edges
$\gU=\bigcup_i\gU_i=\set{\ga_j}_1^N$ (see \refS{Sconfig})
and take a random perfect matching of them.

\begin{proof}[Proof of \refL{LM}]
Let $H$ be a 
multigraph and let $h_k:=n_k(H)$ be the number of vertices of
degree $k$ in $H$, $k\ge0$.
We argue as in \refS{SY}; we denote the possible isolated labelled
copies of $H$ in
$\ggndd$ by $\set{H_\phi}_{\phi\in\Phi(H)}$,
and obtain in analogy with \eqref{zaa} 
\begin{align}\label{moa}
  \bZZ_H=\sum_{\phi\in\Phi(H)} I_\phi,
\end{align}
where now $I_\phi$ is the indicator that $H_\phi$ exists as an isolated
subgraph in $\ggndd$.  $H_\phi$ is specified by:
\begin{PXenumerate}{$\phi'$}
\item \label{phix1}
For each vertex $i\in V(H)=[h]$: a vertex $v_{\nu(i)}$ such that
$d_{\nu(i)}= d_H(i)$.
Furthermore, $v(1),\dots,v(h)$ are distinct. (As \ref{phi1}.)
\item \label{phix2}
For each vertex $i\in V(H)=[h]$: also a bijection between the $d_H(i)$
half-edges at $i\in H$ and the half-edges $\gU_{\nu(i)}$ at $v_{\nu(i)}$.
These bijections define how the half-edges in $\bigcup_{i\in [h]}\gU_{\nu(i)}$ 
are paired in $H_\phi$.
\end{PXenumerate}
For each $k\ge0$, 
there are $h_k$ vertices $i$ such that $d_H(i)=k$, and we 
may choose the corresponding $\nu(i)$ in \ref{phix1} in $(n_k)_{h_k}$ ways;
then for each of these vertices
the bijection of half-edges in \ref{phix2} may be chosen in
$k!$ ways.
Hence, the number of $H_\phi$ is 
\begin{align}\label{mea}
  \abs{\Phi(H)}=\prod_{k\ge0} (n_k)_{h_k}k!^{h_k}.
\end{align}
(The product in \eqref{mea} is really finite, since $h_k\neq0$ only for finitely
many $k$.)
Each $I_\phi$ is a product of $e(H)$ indicators of specific pairings of 
the type 
$\indic{\ga_k \text{ and }\ga_\ell \text{ are paired in }\ggndd}$.
Hence,
\begin{align}\label{mia}
  \E I_\phi =\frac{1}{(N-1)(N-3)\dotsm(n-2e(H)+1)}=\frac{1}{((N-1))_{e(H)}}.
\end{align}
Consequently, \eqref{moa}, \eqref{mea} and \eqref{mia} yield the exact
formula
\begin{align}\label{bko}
  \E \bZZ_H = 
\frac{1}{((N-1))_{e(H)}}\prod_{k\ge0} (n_k)_{h_k}k!^{h_k}.
\end{align}

As \ntoo, we obtain from \eqref{bko} using \ref{AD} and 
$\gl_H$ defined in \eqref{glh},
\begin{align}
  \E \bZZ_H
&=N^{-e(H)}\bigpar{1+O(N\qw)}\prod_{k\ge0}\bigpar{np_k+o(n)}^{h_k}k!^{h_k} 
\notag\\&
=
\frac{n^{v(H)}}{N^{e(H)}}\Bigpar{\prod_{k\ge0}\bigpar{p_k k!}^{h_k} +o(1)}
\notag\\&=
{n^{v(H)-e(H)}}{\mu^{-e(H)}}\Bigpar{\prod_{u\in H}p_{d_H(u)} d_H(u)! +o(1)}
\notag\\&=
{n^{v(H)-e(H)}}\aut(H)\bigpar{\gl_H +o(1)}.
\label{mo}
\end{align}
Using \eqref{uZ}, this proves
\eqref{lma}, \eqref{lmc} and \eqref{lmd}.

Now consider two connected multigraphs $H$ and $H'$, 
and let $h_k':=n_k(H')$.
By \eqref{moa},
\begin{align}\label{mob}
  \bZZ_H\bZZ_{H'}=\sum_{\phi\in\Phi(H)}\sum_{\phi'\in\Phi(H')} I_\phi I_{\phi'}.
\end{align}
Let $\phi\in\Phi(H)$ and condition on
$I_{\phi}=1$. Then $H_\phi$ is a component of $\ggndd$, and the rest of the
graph, \ie{} $\ggndd\setminus H_\phi$, is given by another instance of the
configuration model, with $n_k$ replaced by $n_k-h_k$ and $N$ by $N-2e(H)$.
Consequently, if we first only consider $\phi\in \Phi(H)$ and
$\phi'\in\Phi(H')$ such that $H_\phi$ and $H_{\phi'}$ are disjoint, then,
using \eqref{bko} for $\ggndd\setminus H_\phi$,
\begin{align}
\hskip4em&\hskip-4em
\E  \sum_{\phi,\phi':H_\phi\cap H_{\phi'}=\emptyset}I_\phi I_{\phi'}
=
\sum_{\phi} \E I_\phi
 \sum_{\phi':H_\phi\cap H_{\phi'}=\emptyset} \E\bigpar{I_{\phi'}\mid I_\phi=1}
\notag\\&
=
\sum_{\phi} \E I_\phi
\frac{1}{((N-2e(H)-1))_{e(H')}}
\prod_{k\ge0}(n_k-h_k)_{h'_k} k!^{h_k'}
\notag\\&
=
\E \bZZ_H
\frac{1}{((N-2e(H)-1))_{e(H')}}
\prod_{k\ge0}(n_k-h_k)_{h'_k} k!^{h_k'}
\notag\\&
=
\E \bZZ_H \E \bZZ_{H'}
\frac{((N-1))_{e(H')}}{((N-2e(H)-1))_{e(H')}}
\prod_{k\ge0}\frac{(n_k-h_k)_{h'_k}}{(n_k)_{h'_k}}.
\label{moc}
\end{align}

If $H_\phi$ and $H_{\phi'}$ are not disjoint, then both can occur as
components only if they coincide as unlabelled graphs.
Hence, 
if $H\neq H'$, then $\E\bigpar{\bZZ_H\bZZ_{H'}}$ is given by \eqref{moc},
while if $H=H'$, then there is an additional term $\aut(H)\E\bZZ_H$.
We switch to counting unlabelled copies, using \eqref{uZ} as usual, and
obtain
\begin{align}
\E \bigpar{\ubZZ_H \ubZZ_{H'}}
&
=
\gd_{H,H'}\E \ubZZ_H 
\notag\\
&\hskip-2em{}
+
\E \ubZZ_H \E \ubZZ_{H'}
\frac{((N-1))_{e(H')}}{((N-2e(H)-1))_{e(H')}}
\prod_{k\ge0}\frac{(n_k-h_k)_{h'_k}}{(n_k)_{h'_k}}.
\label{mod}
\end{align}

Consider first the case when $H$ and $H'$ are unicyclic. 
If $p_k>0$ for every $k$ such that $h_kh_k'>0$, then all fractions in
\eqref{mod} tend to 
1 as \ntoo, and thus \eqref{mod} and \eqref{lmc} yield
\begin{align}
\E \bigpar{\ubZZ_H \ubZZ_{H'}}
&
\to
\gd_{H,H'}\gl_H 
+
\gl_H\gl_{H'}.
\label{moe}
\end{align}
If $p_k=0$ for some $k$ with $h_kh'_k>0$, then $\E\ubZZ_H\to\gl_H=0$ by
\eqref{lmc} and the definition \eqref{glh} of $\gl_H$ (or directly by
\eqref{mo}), and \eqref{mod} implies that  \eqref{moe} still holds.
If $H\neq H'$, 
\eqref{moe} yields \eqref{lmc2} with $k=2$ and $r_1=r_2=1$; if $H=H'$,
\eqref{moe} yields $\E \bigpar{\ubZZ_H}_2\to\gl_H^2$,
another instance of \eqref{lmc2} ($k=1$ and $r_1=2$).

This shows \eqref{lmc2} when the degree $\sum_ir_i=2$, The general case is
proved in the same way. The factorial moments in \eqref{lmc2} means that we
only count copies that are distinct, and therefore disjoint, and we may 
condition on one indicator $I_\phi$ as in \eqref{moc} and use induction;
this is a standard argument and we omit the details.

Finally, consider the case of two trees $H$ and $H'$. Then \eqref{mod} yields
\begin{align}
&\Cov \bigpar{\ubZZ_H, \ubZZ_{H'}}
=
\gd_{H,H'}\E \ubZZ_H 
\notag\\
&\hskip2em{}
+
\E \ubZZ_H \E \ubZZ_{H'}
\Bigpar{\frac{((N-1))_{e(H')}}{((N-2e(H)-1))_{e(H')}}
\prod_{k\ge0}\frac{(n_k-h_k)_{h'_k}}{(n_k)_{h'_k}}
-1}.
\label{mof}
\end{align}
If $p_k>0$ whenever $h_kh_k'>0$, then 
\begin{align}
\hskip2em&\hskip-2em
\frac{((N-1))_{e(H')}}{((N-2e(H)-1))_{e(H')}}
\prod_{k\ge0}\frac{(n_k-h_k)_{h'_k}}{(n_k)_{h'_k}}
\notag\\&
=\Bigpar{1+\frac{2e(H)e(H')}{N}+O\bigpar{N\qww}}
\prod_{k\ge0}\Bigpar{1-\frac{h_kh'_k}{n_k}+O\bigpar{n_k\qww}}
\notag\\&
= 1 +\frac{2e(H)e(H')}{\mu n} -
\sum_{k\ge0}\frac{h_kh'_k}{p_k n}+o\bigpar{n\qw}.
\label{mog}
\end{align}
Hence, \eqref{mof} and \eqref{lmc} then yield
\begin{align}
\Cov \bigpar{\ubZZ_H, \ubZZ_{H'}}
=
\gd_{H,H'} n \gl_H
+
n \gl_H\gl_{H'}
\Bigpar{\frac{2e(H)e(H')}{\mu } -\sum_{k\ge0}\frac{h_kh'_k}{p_k}}
+o\xpar{n}.
\label{moh}
\end{align}
If $p_k=0$ so $n_k=o(n)$ for some $k$ with $h_kh_k'>0$, then \eqref{bko}
yields
$\E\ubZZ_H,\E\ubZZ_{H'}=O(n_k)$, 
and it is easily seen from \eqref{mof} and an expansion as in \eqref{mog}
that $\Cov\bigpar{\ubZZ_H,\ubZZ_{H'}}=o(n)$. Hence, \eqref{moh} 
holds in this case too.

We have shown \eqref{moh} in general; this is the same as \eqref{lmb} with 
the definition \eqref{gshh}, which completes the proof of \refL{LM}.
\end{proof}

\begin{remark}\label{Ru}
We used labelled copies in the proof for convenience and transferred the
results to unlabelled copies by \eqref{uZ}.
Alternatively, and essentially equivalently,
we may  define $\phi_1\equiv\phi_2$ if $\phi_1,\phi_2\in\Phi(H)$ and
$H_{\phi_1}=H_{\phi_2}$ as unlabelled graphs, and note that then
$I_{\phi_1}=I_{\phi_2}$.  Hence $\Phi(H)$ splits into
equivalence classes of $\aut(H)$ elements each. Define
$\Phiu(H)$ as a subset of $\Phi(H)$ consisting of one element from each
equivalence class. Then, \cf{} \eqref{moa} and \eqref{uZ}, 
$|\Phiu(H)|=\aut(H)\qw|\Phi(H)|$ and
\begin{align}\label{moab}
  \ubZZ_H=\sum_{\phi\in\Phiu(H)} I_\phi,
\end{align}
which can be used instead of \eqref{moa}, yielding the same results.
\end{remark}

\begin{proof}[Proof of \refL{LM3}]
This is similar to the proof of the special case \eqref{lmc2} in \refL{LM}.
  
The special case $k=0$ is shown in the proof of \eqref{pg} in 
\cite[Section 7 including Remark 6]{SJ195}.
In general, we take copies $H_{\phi_{ij}}$  of $H_i$ for $i=1,\dots, k$ and
$j=1,\dots,r_i$, and condition on $I_{\phi_{ij}}=1$ for all such $i$ and
$j$.
Then 
\begin{align}
    \E \Bigpar{\bigpar{\uZZ_{\sC_1}}_{s_1}\bigpar{\uZZ_{\sC_2}}_{s_2}\Bigm|
I_{\phi_{ij}}=1 \forall i,j}
\to \glscx1^{s_1}\glscx2^{s_2}
\end{align}
by the case $k=0$ just discussed applied to 
$\ggndd\setminus\bigcup_{i,j}H_{\phi_{i,j}}$, 
and \eqref{lm3} follows by the argument in \eqref{moc}.
(It is here convenient to count unlabelled copies as in \refR{Ru}.)
\end{proof}

\begin{proof}[Proof of \refL{LM1.5}]
  Let $V$ be a uniformly random vertex in $\ggndd$. Then,
  \begin{align}
    \P\bigpar{\cC(V)\cong H\mid\ggndd}
= |H|\ubZZ_H/n,
  \end{align}
since each component isomorphic to $H$ contains $|H|$ vertices that are
possible choices of $V$.
Hence, using \eqref{lma},
  \begin{align}\label{ria}
    \P\bigpar{\cC(V)\cong H}
= |H|\E \ubZZ_H/n \to |H|\gl_H.
  \end{align}
Furthermore, by the coupling of the exploration process and the branching
process $\cT$ discussed in \refSS{SGW},
  \begin{align}\label{rib}
    \P\bigpar{\cC(V)\cong H}
=     \P\bigpar{\cT\cong H}+o(1)
=p_H+o(1).
  \end{align}
By \eqref{ria} and \eqref{rib}, $|H|\gl_H=p_H$.
\end{proof}

\begin{proof}[Proof of \refL{LM2}]
Suppose that this fails, and that $H_1,\dots,H_k$ are distinct 
trees with $v(H_i)>1$,
$\gl_{H_i}>0$ and $\sum_{i,j}a_ia_j\gs_{H_i,H_j}=0$ for some real numbers
$a_i\neq0$. Hence, by \eqref{cov},
\begin{align}
  \Var\Bigpar{\sumik a_i \ubZZ_{H_k}} 
= n\sum_{i,j=1}^ka_ia_j\gs_{H_i,H_j} + o(n)
= o(n).
\end{align}
Hence, for any tree $H$, by the \CSineq{} and \eqref{cov},
\begin{align}
  \Cov\Bigpar{\sumik a_i \ubZZ_{H_k},\ubZZ_H} 
\le \Var\Bigpar{\sumik a_i \ubZZ_{H_k}}\qq \Var\bigpar{\ubZZ_H}\qq 
= o(n),
\end{align}
and thus by \eqref{cov} again,
\begin{align}\label{byz}
  \sumik a_i \gs_{H_i,H}=0.
\end{align}

For two trees $T_1$ and $T_2$, define
\begin{align}\label{byaj}
\innprod{T_1,T_2}
:=
\frac{2e(T_1)e(T_2)}{\mu } -\sum_{k\ge0}\frac{n_k(T_1)n_k(T_2)}{p_k},
\end{align}
so that \eqref{gshh} can be written
\begin{align}\label{byy}
\gs_{T_1,T_2}=
\gd_{T_1,T_2}  \gl_{T_1}
+
\gl_{T_1}\gl_{T_2}\innprod{T_1,T_2}.
\end{align}
Thus, \eqref{byz} yields, for every tree $H$,
\begin{align}\label{byx}
  \sumik a_i\gl_{H_i}\gd_{H,H_i} + \gl_H\sumik a_i\gl_{H_i}\innprod{H_i,H}=0.
\end{align}
By assumption \ref{Ap1}, there exists an $r>1$ such that $p_r>0$.
Given any tree $T$ with $v(T)>1$, we may replace a leaf by a vertex of
degree $r$, joined 
to $r-1$ new leaves. Denote the result by $T\xx1$; this is a tree with
$n_k(T\xx1)=n_k(T)+(r-2)\gd_{k1}+\gd_{kr}$ and $e\bigpar{T\xx1}=e(T)+r-1$.
Hence, for any other tree $T'$,
\begin{align}\label{bya}
  \innprod{T\xx1,T'}
=\innprod{T,T'}+\frac{2(r-1)e(T')}\mu-\frac{(r-2)n_1(T')}{p_1}
 -\frac{n_r(T')}{p_r}. 
\end{align}
Repeat this procedure and obtain a sequence of trees $T\xx j$, $j\ge0$, with
$T\xx0:=T$ 
(replacing an arbitrary leaf each time). If $j$ is large enough, then
$v(T\xx j)>v(H_i)$ for $i=1,\dots,k$, and thus $T\xx j\neq H_i$ and
\eqref{byx} yields
\begin{align}\label{byb}
  \gl_{T\xx j}\sumik a_i\gl_{H_i}\innprod{H_i,T\xx j}=0,
\qquad j \text{ large}.
\end{align}
If $\gl_T>0$, then also $\gl_{T\xx j}>0$ for every $j\ge1$, and thus
\eqref{byb} yields $\sumik a_i\gl_{H_i}\innprod{H_i,T\xx j}=0$ for large $j$.
Furthermore, it follows from \eqref{bya} that this sum is a linear function
of $j$, and thus it vanishes for all $j$, i.e., 
\begin{align}\label{byj}
\sumik a_i\gl_{H_i}\innprod{H_i,T\xx j}=0,
\qquad j\ge0.
\end{align}
In particular we can take $j=0$ in \eqref{byj}, and see that
$\sumik a_i\gl_{H_i}\innprod{H_i,T}=0$ for every tree $T$ with $v(T)>0$ and
$\gl_T>0$.
Hence, the second term in \eqref{byx} vanishes for every tree $H$ with
$v(H)>1$, and thus 
\eqref{byx} and \eqref{byj} imply
\begin{align}\label{byf}
  \sumik a_i\gl_{H_i}\gd_{H,H_i} =0
\end{align}
for every such tree $H$.
However, taking $H=H_1$, this yields $a_1\gl_{H_1}=0$, a contradiction. 
\end{proof}

\section{Proof of \refT{TGpsi}}\label{Strunc}

We say that a graph functional $\psi$ has 
\emph{finite support} if $\psi(H)\neq0$ for only finitely many unlabelled $H$,
or equivalently, if there exists $K<\infty$ such that 
\begin{align}\label{supportK}
  \psi(H)=0\qquad \text{if } e(H)\ge K.
\end{align}

\begin{lemma}\label{LG1}
Assume \refAA.
  Let $\psi$ be a graph functional with finite support and define $\Psi$ by
  \eqref{Psi}.
Then
\begin{align}\label{lg1}
  \E \Psi\bigpar{\ggndd} = n\E \psi(\cT)+o(n).
\end{align}
\end{lemma}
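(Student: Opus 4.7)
The plan is to expand $\Psi(\ggndd)$ using the third form of \eqref{Psi}, namely $\Psi(G)=\sum_H |H|\psi(H)\ubZ_H(G)$ summed over unlabelled connected multigraphs $H$, and then apply the mean asymptotics already established in \refL{LM}. The finite-support hypothesis on $\psi$ is crucial: since $\psi(H)=0$ whenever $e(H)\ge K$, and only finitely many unlabelled connected multigraphs satisfy $e(H)<K$ (as $v(H)\le e(H)+1$ for connected $H$), this sum collapses to a finite linear combination. We may therefore take expectations term by term without any convergence issue.

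Next I would invoke \refL{LM} in three cases. For each finite tree $T$ in the support of $\psi$, part \ref{LMa} gives $\E\ubZZ_T=n\gl_T+o(n)$. For each connected unicyclic $H$ in the support, part \ref{LMc} gives $\E\ubZZ_H=\gl_H+o(1)=o(n)$. For each connected $H$ with $e(H)>v(H)$ in the support, part \ref{LMd} gives $\E\ubZZ_H=o(1)=o(n)$. Summing over the finitely many $H$ in the support and absorbing the unicyclic and multicyclic contributions into the remainder yields
\[
\E\Psi(\ggndd)=n\sum_T |T|\psi(T)\gl_T+o(n),
\]
with the sum over the finitely many trees $T$ in the support of $\psi$.

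Now by \refL{LM1.5}, $|T|\gl_T=p_T$ for every finite unrooted tree $T$, with $p_T$ as in \eqref{PT}. Hence the main term rewrites as $n\sum_T \psi(T) p_T+o(n)$, and it remains to identify the finite sum as $\E\psi(\cT)$. Since $\cT$ is always a tree (being a \GW{} tree), and since $\psi$ has finite support (so in particular $\psi(\cT)=0$ whenever $\cT$ is infinite, under the natural convention that $\psi:=0$ outside its finite support), we have $\sum_T \psi(T)p_T=\sum_T \psi(T)\P(\cT\cong T)=\E\psi(\cT)$, which also equals $\EE\psi(\cT)$ in the notation of \eqref{EE}; this yields \eqref{lg1}.

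There is no substantial obstacle here: the finite-support hypothesis reduces everything to a finite linear combination of subgraph counts already estimated by \refL{LM}, with the identification of the limit handled by \refL{LM1.5}. The real work for \refT{TGpsi} will come later, when one removes the finite-support assumption via a truncation argument and controls the tail contribution from large components using \eqref{psibound} and \ref{Asuper}.
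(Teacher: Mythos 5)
Your proof is correct, but it takes a genuinely different route from the paper's. You decompose $\Psi(\ggndd)=\sum_H|H|\psi(H)\ubZZ_H$, observe that the finite-support hypothesis collapses the sum to finitely many connected $H$ (using $v(H)\le e(H)+1$), and then invoke \refL{LM} to get $\E\ubZZ_T=n\gl_T+o(n)$ for trees and $\E\ubZZ_H=O(1)=o(n)$ for (uni- and multi-)cyclic $H$, followed by \refL{LM1.5} ($|T|\gl_T=p_T$) to identify the limit constant as $\E\psi(\cT)$. The paper instead argues directly through the branching-process coupling: with $V$ a uniformly random vertex, $\E\Psi(\ggndd)/n=\E\psi(\cC(V))$, and since $\psi$ depends on $\cC(V)$ only through its first $K$ generations (because of the support bound), the coupling of the exploration process with $\cT$ gives $\E\psi(\cC(V))=\E\psi(\cT)+o(1)$ at once. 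The two approaches rely on the same ingredients in the end — the paper's \refL{LM1.5} is itself proved from \eqref{lma} plus that same coupling — but the paper's direct argument is shorter and sets up the random-vertex framework that is reused verbatim in the subsequent variance estimate (\refL{LG2}), whereas yours is a cleaner ``plug into the subgraph-count lemmas'' computation that makes the role of each case in \refL{LM} explicit. One tiny point worth being precise about: when you pass from $\sum_T\psi(T)p_T$ to $\E\psi(\cT)$, you need not only that $\psi$ of an infinite tree is $0$ (as you note) but also that $\psi(T)=0$ for the countably many finite trees $T$ with $e(T)\ge K$; both are immediate from the support bound, so this is fine, but the paper's $\EE$ notation makes the finite/infinite split explicit.
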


\begin{proof}
  Let $V$ be a uniformly random vertex in $\ggndd$,
  and couple the exploration
  process of $\cC(V)$ with $\cT$ as in \refSS{SGW}.
  Let $K$ be as in \eqref{supportK}.
If the first $K$ generations of the two processes are equal, then either
$\cC(V)=\cT$ (as unlabelled rooted graphs), or both have at least $K$ edges,
and in both cases $\psi(\cC(V))=\psi(\cT)$.
Hence, noting that \eqref{supportK} also implies that $\psi$ is bounded,
\begin{align}\label{lx0}
  \E \psi\bigpar{\cC(V)} = \E \psi(\cT) + o(1).
\end{align}
Furthermore, conditioning on $\ggndd$,
\begin{align}\label{lx1}
  \E \bigpar{\psi\xpar{\cC(V)}\mid\ggndd}
  =\frac{1}{n}\sumin \psi\bigpar{\cC(v_i)}
= \frac{1}{n}\Psi\bigpar{\ggndd}.
\end{align}
Thus, taking the expectation,
\begin{align}\label{lxx}
  \E \psi\xpar{\cC(V)}
= \frac{1}{n}\E\Psi\bigpar{\ggndd}.
\end{align}
The result
\eqref{lg1} follows by  combining \eqref{lxx}
with \eqref{lx0}.
\end{proof}

\begin{lemma}\label{LG2}
Assume \refAA. Let $\psi$ be a graph functional with finite support, let $K$
be as in \eqref{supportK} and let $M:=\sup_H|\psi(H)|<\infty$.
Then, for $n$ so large that $N\ge \mu n/2$ and $N\ge 4K$,   
with $c:=4+16/\mu$, 
\begin{align}\label{lg2}
  \Var\bigpar{\Psi\bigpar{\ggndd}} \le c M K^2 \E \bigabs{\psi\xpar{\cC(V)}} n.
\end{align}
\end{lemma}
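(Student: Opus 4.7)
The plan is to control $\Var\Psi(\ggndd)$ by decomposing the variance over ordered vertex pairs and then, via an exploration coupling, comparing the full configuration model with the restricted one that appears after conditioning on a single component.

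Set $W_u := \psi(\cC(u))$. I would start from
\begin{equation*}
\Var\Psi=\sum_u \Var W_u+\sum_{u\neq v}\Cov(W_u,W_v).
\end{equation*}
Because $|\psi|\le M$ and $\psi(\cC(u))=0$ unless $e(\cC(u))<K$ (so $|\cC(u)|\le K$ whenever $W_u\neq 0$), the diagonal is at most $Mn\E|\psi(\cC(V))|$. In each covariance I would further split according to whether $v\in\cC(u)$: the same-component contribution is $\sum_u \E\bigpar{W_u^2(|\cC(u)|-1)}\le KMn\E|\psi(\cC(V))|$, leaving
\begin{equation*}
\text{diff}_{u,v}:=\E\bigpar{W_uW_v\indic{v\notin\cC(u)}}-\E W_u\cdot\E W_v
\end{equation*}
to bound. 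Conditional on $\cC(u)=H$, the complement of $\cC(u)$ in $\ggndd$ is itself a configuration multigraph on $[n]\setminus V(H)$ with the restricted degree sequence; writing $\E_H$ for its expectation, I can rewrite $\text{diff}_{u,v}$ as a sum over $H$ of $\psi(H)\P(\cC(u)=H)\bigsqpar{\indic{v\notin V(H)}\E_H\psi(\cC(v))-\E\psi(\cC(v))}$. The ``$v\in V(H)$'' part contributes at most $MK\E|\psi(\cC(u))|$ after summing over $v$ (since $|V(H)|\le K$).

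The ``$v\notin V(H)$'' part reduces to the key estimate
\begin{equation*}
\bigabs{\E_H\psi(\cC(v))-\E\psi(\cC(v))}\le \frac{4MK^2}{N-2K+1},
\end{equation*}
which is the main technical step. I would prove it by an exploration coupling: run the exploration of $\cC(v)$ in both the full and the restricted model simultaneously, at each step sharing the uniform partner-half-edge whenever it falls outside $V(H)$ and resampling in the restricted model otherwise. Since $\psi$ vanishes on components with $\ge K$ edges, only the first $K$ steps are relevant; at step $i$ the divergence probability is at most $2e(H)/(N-2i-1)\le 2K/(N-2K+1)$, summing to at most $2K^2/(N-2K+1)$, and off the divergence event the two $\psi$-values coincide, so the expectations differ by at most $2M$ times the divergence probability.

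Summing first over $v$ and then over $u$, and using $N\ge 4K\Rightarrow N-2K+1\ge N/2$ together with $N\ge\mu n/2$ to convert $n^2/(N-2K+1)$ into $4n/\mu$, yields
\begin{equation*}
\sum_{u\neq v}|\text{diff}_{u,v}|\le\bigpar{\tfrac{16K^2}{\mu}+K}Mn\E|\psi(\cC(V))|.
\end{equation*}
Collecting the diagonal, same-component, and different-component contributions,
\begin{equation*}
\Var\Psi\le\bigpar{1+2K+\tfrac{16K^2}{\mu}}Mn\E|\psi(\cC(V))|\le\bigpar{4+\tfrac{16}{\mu}}MK^2 n\E|\psi(\cC(V))|,
\end{equation*}
where the last step uses $1+2K\le 4K^2$ for $K\ge 1$. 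The main obstacle is the coupling estimate: one must carefully construct the shared-randomness coupling between configuration models on different half-edge sets, and track the divergence probability over the bounded-length exploration cleanly enough to obtain the factor $K^2/N$ that, combined with $N\ge\mu n/2$, produces a linear-in-$n$ bound with the right constants.
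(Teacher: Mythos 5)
Your proposal is correct, and it rests on the same core idea as the paper's proof: an exploration coupling for $\cC(v)$ that succeeds unless one of the first $K$ revealed edges collides with the other component, an event of probability $O(K^2/N)$. What differs is the packaging. You expand $\Var\Psi$ over ordered vertex pairs $(u,v)$, peel off the diagonal and same-component terms, condition on the exact component $\cC(u)=H$, and compare the exploration of $\cC(v)$ in the full configuration model with the restricted one on $[n]\setminus V(H)$. The paper instead draws two independent uniform vertices $V_1,V_2$ and constructs, on one probability space, the true pair $(X_1,X_2)=(\psi(\cC(V_1)),\psi(\cC(V_2)))$ together with an independent copy $(Y_1,Y_2)$ satisfying $X_1=Y_1$, so that $n^{-2}\Var\Psi=\E\bigpar{X_1(X_2-Y_2)}$; this is precisely the random-vertex averaging of your sum over pairs, and it absorbs the diagonal and same-component cases into the single divergence event $\set{V_2\in\cC_K(V_1)}$. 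The paper's construction also reveals only the first $K$ edges of $\cC(V_1)$, never the entire component, which avoids conditioning on the exact-component event $\cC(u)=H$ and the accompanying need to pass to a configuration model on a different half-edge set. Both routes deliver the same constant $c=4+16/\mu$ (yours via the slightly ad hoc step $1+2K\le 4K^2$); yours is a perfectly valid variant once the coupling between the two models is made precise—sharing the random partner when it falls outside $V(H)$ and resampling otherwise, with the observation that only $H$ with $e(H)<K$ contribute, so $2e(H)<2K$ and the restricted model is nondegenerate since $N\ge 4K$.
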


\begin{proof}
Let $V_1$ and $V_2$ be independent uniformly random vertices in $\ggndd$.
Then
\begin{align}\label{lx2}
  \E\bigpar{\psi(\cC(V_1))\psi(\cC(V_2))\mid \ggndd}
&=\frac{1}{n^2}\sum_{v_1,v_2\in\ggndd} \psi(\cC(v_1))\psi(\cC(v_2)) 
\notag\\&
=\frac{1}{n^2}\Psi\bigpar{\ggndd}^2.
\end{align}

We reveal the edges in $\ggndd$ in a special order.
(Cf.\ the related argument in \cite[Section 4.2]{BallNeal}.)
First, let $\Pi_1$ and $\Pi_2$ denote independent exploration processes of 
$\cC(V_1)$ and $\cC(V_2)$, starting at $V_1$ and $V_2$ as above.
(These may thus conflict. Think of them as exploring different copies of
$\ggndd$.) Let $Y_1:=\psi(\cC(V_1))$ and $Y_2:=\psi(\cC(V_2))$ be given by 
$\Pi_1$ and $\Pi_2$, respectively, 
and note that $Y_1$ and $Y_2$ are independent.
Next, start from scratch and reveal first the edges of $\cC(V_1)$ according
to the process $\Pi_1$, but stop when $K$ edges have been found, or when
$\cC(V_1)$ is exhausted (if it has less than $K$ edges), where again $K$ is
as in \eqref{supportK}; let $\cC_K(V_1)$ denote the explored part of $\cC(V_1)$.
Then reveal edges according to $\Pi_2$ (starting from $V_2$) as long this
does not involve any half-edge already paired. When the first conflict
occurs, abandon $\Pi_2$ and  pair the remaining half-edges uniformly at
random in any order. This yields a copy of $\ggndd$, and we define
$X_j:=\psi(\cC(Y_j))$, $j=1,2$,
for it.

By the construction, and \eqref{supportK}, $X_1=Y_1$.
Furthermore, $X_2\neq Y_2$ only if a conflict has occurred when revealing 
one of the first $K$ edges according to $\Pi_2$. This may happen either
because $V_2$ belongs to $\cC_K(V_1)$, or
because one of the half-edges already found during the exploration of the
first $K$ edges of
$\cC(V_2)$ is paired by $\Pi_2$
with one of the half-edges in $\cC_K(V_1)$.
Condition on  $\cC_K(V_1)$.
Since we reveal only at most $K$ edges and thus $K+1$ vertices in
$\cC_K(V_1)$, the first possibility has probability $\le (K+1)/n$, and the
second possibility has probability $\le 2K/(N-2K)$ for each of at most
$K$ pairings; hence, we obtain the union bound, using $N-2K\ge N/2\ge \mu n/4$,
\begin{align}\label{lx3}
  \P\bigpar{X_2\neq Y_2\mid \cC_K(V_1)} 
\le \frac{K+1}{n} +  \frac{2K^2}{N-2K}
\le \frac{2K^2}{n}\bigpar{1+4\mu\qw}.
\end{align}

The construction shows that $X_1$ and $X_2$ have the correct joint distribution,
while $Y_1$ and $Y_2$ have same individual distribution as these, but are
independent of each other. 
Thus, by \eqref{lx2} and \eqref{lx1},
\begin{align}
  \E \bigpar{X_1X_2}
&=
  \E \bigpar{\psi(\cC(V_1))\psi(\cC(V_2))}
= n^{-2} \E \bigpar{\Psi\bigpar{\ggndd}^2},
\\
  \E \bigpar{Y_1Y_2}
&=
\bigpar{\E Y_1}^2
=  \bigpar{\E \psi(\cC(V_1))}^2
= n^{-2} \bigpar{\E \Psi\bigpar{\ggndd}}^2.
\end{align}
Consequently, recalling $X_1=Y_1$,
\begin{align}\label{lx4}
n\qww \Var\bigpar{\Psi(\ggndd)}
= \E (X_1X_2)-\E(Y_1Y_2)
= \E\bigpar{X_1(X_2-Y_2)}.
\end{align}
Furthermore, by \eqref{lx3}, since $X_1$ is determined by $\cC_K(V_1)$,
\begin{align}
  \E\bigpar{|X_2-Y_2|\mid X_1}
\le 2M   \P\bigpar{X_2\neq Y_2\mid X_1} \le \frac{4MK^2}{n}\bigpar{1+4\mu\qw}.
\end{align}
Hence, \eqref{lx4} yields
\begin{align}
  n\qww\Var\bigpar{\Psi\bigpar{\ggndd}}
\le \E \bigpar{|X_1||X_2-Y_2|}
\le \frac{cMK^2}{n}\E|X_1|,
\end{align}
which yields \eqref{lg2}.
\end{proof}

\begin{lemma}  \label{LJ1}
  Assume \refAA.
  For every $\eps>0$, there exists $\eps_1>0$ such that if $\cJ\subset[n]$
  is any subset with $|\cJ|\le\eps_1n$, then $\sum_{i\in\cJ}d_i<\eps n$.
  In particular, every subgraph $H$ of $\ggndd$ with $|H|\le \eps_1n$ has
  $e(H)<\eps n$ (deterministically).
\end{lemma}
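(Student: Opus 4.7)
The plan is to use the uniform integrability of $D_n$ provided by \ref{ADmu} (combined with \ref{AD}), which is the substantive content of the hypotheses; the rest is a straightforward splitting argument.

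First I would fix $\eps>0$ and invoke uniform integrability to obtain a constant $K=K(\eps)<\infty$ such that
\begin{equation*}
\E\bigpar{D_n\indic{D_n>K}}<\eps/2 \qquad \text{for all }n,
\end{equation*}
which, by the definition of $D_n$ in \eqref{Dn}, is the same as
\begin{equation*}
\tfrac1n\sum_{i=1}^n d_i\indic{d_i>K}<\eps/2, \qquad \text{i.e.,}\quad \sum_{i:\,d_i>K}d_i<\eps n/2.
\end{equation*}
Then, for any $\cJ\subseteq[n]$, I would split the sum according to whether $d_i\le K$ or $d_i>K$:
\begin{equation*}
\sum_{i\in\cJ}d_i\le K|\cJ|+\sum_{i:\,d_i>K}d_i< K|\cJ|+\eps n/2.
\end{equation*}
Choosing $\eps_1:=\eps/(2K+1)$ (say) then gives $\sum_{i\in\cJ}d_i<\eps n$ whenever $|\cJ|\le\eps_1 n$, proving the first assertion.

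For the second assertion, I would observe that if $H$ is any subgraph of $\ggndd$ with vertex set $\cJ:=V(H)\subseteq[n]$, then every edge of $H$ contributes $2$ to $\sum_{v\in H}d_H(v)$, and each vertex $v_i\in H$ satisfies $d_H(v_i)\le d_{\ggndd}(v_i)=d_i$. Hence
\begin{equation*}
2e(H)=\sum_{v\in H}d_H(v)\le \sum_{i\in\cJ}d_i,
\end{equation*}
so the first part (applied with $\eps$ replaced by $\eps$, or even $2\eps$) yields $e(H)<\eps n$ deterministically once $|H|\le\eps_1 n$ for a suitable $\eps_1$.

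There is no real obstacle here; the only point requiring care is that \ref{ADmu} combined with \ref{AD} is what delivers uniform integrability (as noted explicitly after \ref{ADmu} in the paper), and this is applied in the standard form above. The result does not use any randomness of $\ggndd$ beyond the fact that vertex degrees are bounded by the prescribed $d_i$.
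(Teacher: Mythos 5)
Your proof is correct and uses essentially the same idea as the paper: the substantive input is the uniform integrability of $D_n$ guaranteed by \ref{AD}--\ref{ADmu}. The paper invokes the ``small-probability-event'' characterization of uniform integrability directly (there is $\eps_1$ such that $\P(\cE)\le\eps_1$ implies $\E(D_n;\cE)<\eps$, applied to $\cE=\{I\in\cJ\}$), whereas you use the equivalent ``uniform small tail'' characterization ($\E(D_n\indic{D_n>K})<\eps/2$) followed by a two-part split of the sum; both are standard and the calculation you give is correct, so this is a minor reformulation rather than a genuinely different argument.
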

\begin{proof}
  We have $D_n:=d_I$, where $I$ is a uniformly random index in $[n]$.
  The uniform integrability of $D_n$ (see \ref{ADmu}) means
  (see \eg{} \cite[Theorem 5.4.1]{Gut})
  that there
  exists $\eps_1$ such that for any event $\cE$ with $\P(\cE)\le\eps_1$, we
  have $\E\bigpar{D_n;\,\cE}<\eps$.
  Let $\cE:=\set{I\in\cJ}$. Then $\P(\cE)=|\cJ|/n$ and
  $\sum_{i\in\cJ}d_i=n\E\bigpar{D_n;\,\cE}$.
\end{proof}

We consider now only the supercritical case
\ref{Asuper}. 

\begin{lemma}\label{LG3}
  Assume \refAA{} and \ref{Asuper}.
Then there exist $c>0$, $\eps>0$ and $C<\infty$ such that, for $\ggndd$,
\begin{align}\label{lg3}
  \P\bigpar{e\bigpar{\cC(V)}=\ell} &\le C e^{-c\ell},
\qquad
0\le \ell \le \eps n,
\\\label{lg3k}
  \P\bigpar{\abs{\cC(V)}=k} &\le C e^{-ck},
\qquad
1\le k \le \eps n.
\end{align}
\end{lemma}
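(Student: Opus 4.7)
I would exploit the standard comparison between the exploration of $\cC(V)$ and a supercritical Galton--Watson process: under \ref{Asuper} the exploration's size process has positive drift, so the event that it returns to zero at time $\ell$ has probability decaying exponentially in $\ell$, which I would extract via a Chernoff-type inequality.

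\emph{Set-up.} Let $S_t$ denote the number of unpaired (``active'') half-edges at already-visited vertices after $t$ pairings of the exploration of $\cC(V)$, so $S_0 = d_V$ and $e(\cC(V))$ is the first $t > 0$ with $S_t = 0$. At step $t + 1$ an active half-edge is paired with one of the $N - 2t - 1$ remaining unpaired half-edges uniformly at random; writing $\xi_t := S_t - S_{t-1} + 2 \ge 0$, this means $\xi_t = d_i \ge 1$ if an unexplored $v_i$ is discovered (with conditional probability $d_i/(N - 2t - 1)$) and $\xi_t = 0$ if a back-pairing occurs (with probability $(S_{t-1} - 1)/(N - 2t - 1)$). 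Consequently $\{e(\cC(V)) = \ell\} \subseteq \{\sum_{t \le \ell} \xi_t \le 2\ell\}$.

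\emph{Drift and Chernoff bound.} By \ref{Asuper} we have $\E D^2 > 2\mu$, and Fatou gives $\liminf_n \sum_i d_i^2/N \ge \E D^2/\mu > 2$. Moreover, by monotone convergence there is a large $M$ with $\E\bigpar{D^2 \indic{D \le M}} > 2\mu$, so for $n$ large, $\sum_{i:\,d_i \le M} d_i^2/N \ge 2 + 3\delta$ for some $\delta > 0$. Then, for $t \le \eps n$ with $\eps$ small, the contribution to $\E(\xi_{t+1} \mid \cF_t)$ from already-visited vertices is $O(\eps)$, so the drift from unexplored vertices of degree at most $M$ is at least $2 + 2\delta$. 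The main bound $\P(\sum \xi_t \le 2\ell) \le C e^{-c\ell}$ would follow from Markov's inequality $\P(\sum \xi_t \le 2\ell) \le e^{2\lambda \ell}\, \E \exp(-\lambda \sum \xi_t)$ for small $\lambda > 0$, iteratively conditioning on $\cF_{t-1}$ and using $e^{-x} \le 1 - x + x^2/2$ (for $x \ge 0$) to obtain $e^{2\lambda}\, \E(e^{-\lambda \xi_{t+1}} \mid \cF_t) \le 1 - c\lambda \le e^{-c\lambda}$.

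\emph{Obstacle and truncation.} The principal technical issue is that \refAA{} provides no uniform bound on $\sum_i d_i^3/N$ (or even on $\sum_i d_i^2/N$), which the $\lambda^2$ term in the Chernoff estimate above wants to control. I would sidestep this by truncation: abort the exploration as soon as a vertex of degree exceeding $M$ is discovered, bounding the probability of aborting in the first $\eps n$ steps by $C \eps\, \E\bigpar{D_n \indic{D_n > M}}$, which tends to $0$ as $M \to \infty$ uniformly in $n$ by the uniform integrability from \ref{ADmu}. On the non-aborted event $\xi_t \le M$ throughout, so the Chernoff argument applies with explicit constants depending on $M$. Choosing first $M$ large enough that the truncated drift exceeds $2 + 2\delta$ and the abort contribution is sufficiently small, then $\eps$ small afterwards, yields \eqref{lg3}.

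\emph{From edges to vertices.} Since $\cC(V)$ is connected, $e(\cC(V)) \ge |\cC(V)| - 1$, so $\P(|\cC(V)| = k) \le \sum_{\ell \ge k-1} \P(e(\cC(V)) = \ell)$. For $k \le \eps n$, summing \eqref{lg3} geometrically over $k - 1 \le \ell \le \eps n$ gives the desired $Ce^{-ck}$ up to a residual event $\{|\cC(V)| = k,\, e(\cC(V)) > \eps n\}$; but this event forces at least $\eps n/2$ back-pairings during the exploration of a $k$-vertex subgraph, each occurring with conditional probability $O(k/n)$, and a direct union bound yields probability at most $(Ck/n)^{\eps n/2}$, which is negligible compared to $e^{-ck}$ for $k \le \eps n$. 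This gives \eqref{lg3k}.
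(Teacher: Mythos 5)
Your overall strategy -- exploration process, drift from supercriticality, Chernoff bound on the walk $\sum_t(\xi_t-2)$ hitting zero -- matches the paper's. The difference is in how the unbounded degrees are handled, and here there is a genuine gap.

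Your \emph{abort} strategy does not give an exponential bound. Two separate problems: first, the probability of aborting during the exploration of $\cC(V)$ is controlled by a union bound over the steps, and for $\ell$ steps this gives roughly
\[
\P(\text{abort before step }\ell)\lesssim\sum_{t\le\ell}\frac{\sum_{d_i>M}d_i}{N-2t}
\approx \frac{\ell}{\mu}\,\E\bigpar{D_n\indic{D_n>M}},
\]
so for $\eps n$ steps you get $\eps n\cdot\E\bigpar{D_n\indic{D_n>M}}/\mu$, not $C\eps\,\E\bigpar{D_n\indic{D_n>M}}$; there is a missing factor of $n$. Second, and more fundamentally, even if you could make the abort probability a small constant $\epsilon'>0$, the decomposition $\P(e(\cC(V))=\ell)\le Ce^{-c\ell}+\P(\text{abort})$ fails to give \eqref{lg3}: the second term does not decay in $\ell$, so for $\ell$ of order $n$ you get only a constant bound, not $Ce^{-c\ell}$. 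No choice of $M$ and $\eps$ fixes this, because the Lemma requires the bound uniformly for all $\ell\le\eps n$.

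The fix is to never abort. Since $x\mapsto e^{-\lambda x}$ is nonincreasing, you have $e^{-\lambda\xi_t}\le e^{-\lambda\min(\xi_t,M)}$ pointwise, and $\min(\xi_t,M)$ is bounded with conditional mean still at least $2+2\delta$ (for suitable fixed $M$ and then $\eps$, exactly as in your drift estimate); the supermartingale Chernoff argument then goes through verbatim. This is essentially what the paper does, though phrased differently: it constructs an iid sequence $\xi_i$ with $\P(\xi=k)=k(p_k-3\eps)_+/\mu$ for $k\le K$ (and $\xi=0$ otherwise), couples so that $Q_j\ge S_j:=\sum_{i\le j}(\xi_i-2)$ for $j\le\eps n$, and applies a standard Chernoff bound to the iid walk $S_j$. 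The coupling never needs to abort: when a high-degree vertex is discovered, $Q_j$ jumps up far more than $\xi_j$ does, so the domination holds automatically.

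Your edges-to-vertices step also has a gap: the claim that each back-pairing occurs with conditional probability $O(k/n)$ requires a bound on the degrees of the $k$ discovered vertices, which \refAA{} does not supply; a component with $k$ vertices could have $\Theta(n)$ active half-edges if one vertex has enormous degree, making back-pairings likely. The paper avoids this by invoking \refL{LJ1}: uniform integrability of $D_n$ gives a fixed $\eps_1>0$ such that \emph{every} set of at most $\eps_1 n$ vertices has total degree less than $\eps n$, deterministically. Then $|\cC(V)|=k\le\eps_1 n$ forces $e(\cC(V))<\eps n$, so the residual event you worry about is actually empty, and \eqref{lg3k} follows by summing \eqref{lg3} over $\ell\ge k-1$.

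Both issues are repairable without changing your basic approach, but as written the proposal does not establish the lemma.
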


\begin{proof}
Consider the exploration process, starting at a random vertex $V$ and
restarting at a new random vertex when a component is completely explored.
Let $Q_j$ be the number of unpaired half-edges in the explored part when $j$
pairings have been made. In particular, $Q_0=d(V)\eqd D_n$.

 By \ref{Asuper}, $\E D^2>2\mu$, and thus there exists $K<\infty$ such that 
\begin{align}
s_K:=  \sum_{k=1}^K k^2 p_k>2\mu.
\end{align}
Let $\eps=(s_K-2\mu)/(10K^3)>0$.
Consider until further notice 
only $n$ that are so large that $n_k/n>p_k-\eps$ for
$k=1,\dots,K$, and also $N/n < \mu+\eps$, see  \ref{AD} and \eqref{N2}.

Let $k\le K$ be such that $p_k\ge 3\eps$.
During the first $\eps n$ steps of the exploration process, there is always
at least $n_k-\eps n > (p_k -2\eps) n$ unused vertices of degree $k$, and
thus the probability that the next pairing is with a half-edge at an unused 
vertex
of degree $k$ is at least, noting that $p_k\le \mu$,
\begin{align}\label{lynx}
  \frac{k (n_k-\eps n)}{N} >\frac{k(p_k -2\eps)n}{(\mu+\eps)n}
>\frac{k(p_k-2\eps)(1-\eps/\mu)}{\mu}
>\frac{k(p_k-3\eps)}{\mu}.
\end{align}

Let $\xi_1,\xi_2,\dots$ be independent copies of a random
variables $\xi$ with
the distribution
\begin{align}\label{leo}
  \P(\xi=k)=
  \begin{cases}
    k (p_k-3\eps)_+/\mu, & 1\le k\le K,
\\
1-\sum_{j=1}^K\P(\xi=j), &k=0,
  \end{cases}
\end{align}
and define $S_m:=\sum_{j=1}^m (\xi_j-2)$.

By \eqref{lynx}, we can couple the exploration process with the variables
$\xi_i$ such 
that for every $j\le\eps n$,
$Q_{j+1}-Q_j \ge \xi_j-2 = S_{j+1}-S_j$, and thus, by induction,
$Q_j\ge S_j$.
 
Suppose now that the component $\cC(V)$ has exactly $\ell$ edges. Then
$Q_\ell=0$, and hence $S_\ell\le 0$.
However,
\eqref{leo} implies that
\begin{align}
  \mu \E\xi 
=\sum_{k=1}^K k^2(p_k-3\eps)_+
\ge \sum_{k=1}^K k^2 p_k -3 K^3\eps
>2\mu
\end{align}
and thus $\E\xi>2$.
Furthermore, $\xi$ is bounded, and thus 
$\psiq(t):=\E e^{t(\xi-2)}<\infty$ for every real
$t$. 
Hence, $\psiq'(0)=\E(\xi-2)>0$, and thus there exist $t_0<0$ such that
$\psiq(t_0)<1$. Consequently, using a Chernoff bound,
\begin{align}
\P\bigpar{e(\cC(V))=\ell}\le
\P(Q_\ell=0)\le 
  \P(S_\ell\le 0) \le \E e^{t_0 S_{\ell}} = \psiq(t_0)^\ell, 
\qquad \ell\le\eps n.
\end{align}
This proves \eqref{lg3}, with $c:=-\log\psiq(t_0)>0$, when $n$ is large
enough.
The result \eqref{lg3} extends to all $n$, since it is trivial for small $n$
if $C$ is large enough.

We turn to \eqref{lg3k}.
Let $\eps_1$ be as in \refL{LJ1}.
Then, 
any component $\cC$ with
$|\cC|=k\le \eps_1 n$ has at most $\eps n$ edges, and at least $k-1$.
Consequently, by \eqref{lg3}, if $k\le \eps_1 n$,
\begin{align}
  \P\bigpar{|\cC(V)|=k}
\le
\sum_{\ell=k-1}^{\eps n} \P\bigpar{e(\cC(V))=\ell}
\le
\sum_{\ell=k-1}^{\eps n} C e^{-c\ell}
\le C'e^{-c k}.
\end{align}
Hence \eqref{lg3k} too holds, if we redefine $C$ and $\eps$.
\end{proof}

We have a similar estimate for the supercritical branching process $\cT$.

\begin{lemma}
  \label{LG4}
  Assume \refAAsuper.
  Then, for the branching process $\cT$ in \refSS{SGW},
  \begin{align}\label{pippi}
  \P\bigpar{|\cT|=k}\le Ce^{-ck},\qquad 1\le k<\infty.
\end{align}
\end{lemma}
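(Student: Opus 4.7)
The plan is to bound $\P(|\cT|=k)$ using the Lukasiewicz-style walk encoding of $\cT$, followed by a truncated Chernoff estimate, mirroring the strategy of \refL{LG3} but in the purely probabilistic (non-configuration) setting. The key observation is that supercriticality gives positive drift to the exploration walk of a non-root vertex, so the event $|\cT|=k$ forces an atypical deficit of order $k$ in a random walk, which has exponentially small probability.

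More precisely, let $\eta_1,\eta_2,\dots$ be independent with $\eta_1\eqd D$ and $\eta_j\eqd\hD-1$ for $j\ge2$, and set $S_0:=1$, $S_j:=S_{j-1}+\eta_j-1$. Exploring $\cT$ in depth-first order shows that $|\cT|=k<\infty$ implies $S_k=0$, i.e.\ $\sum_{j=1}^k\eta_j=k-1$. Hence
\begin{align*}
  \P\bigpar{|\cT|=k}\le \P\Bigpar{\sum_{j=1}^k\eta_j\le k-1}.
\end{align*}
By the supercriticality hypothesis \ref{Asuper}, together with \eqref{hD},
$\E(\hD-1)=\E D(D-1)/\mu>1$, so the walk driven by the $\eta_j$ for $j\ge2$ has strictly positive drift.

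The main obstacle is that we have no exponential moment assumption on $D$, so we cannot apply Chernoff directly to $\eta_j$. I would circumvent this by truncation: set $\tilde\eta_j:=\eta_j\wedge K$ for a large fixed $K$. Monotone convergence gives $\E\bigpar{(\hD-1)\wedge K}\upto \E(\hD-1)>1$, so we may choose $K$ with $\E\tilde\eta_j>1$ for $j\ge2$. Since $\tilde\eta_j\le\eta_j$,
\begin{align*}
  \P\bigpar{|\cT|=k}\le\P\Bigpar{\sum_{j=1}^k\tilde\eta_j\le k-1}
  =\P\Bigpar{\sum_{j=1}^k(\tilde\eta_j-1)\le-1}.
\end{align*}
Now each $\tilde\eta_j$ is bounded, hence has finite \mgf{} for all real $t$. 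Set $\psiq(t):=\E e^{t(\tilde\eta_j-1)}$ for $j\ge2$. Then $\psiq(0)=1$ and $\psiq'(0)=\E\tilde\eta_j-1>0$, so there exists $t_0<0$ with $\psiq(t_0)<1$. A standard Chernoff bound gives
\begin{align*}
  \P\Bigpar{\sum_{j=1}^k(\tilde\eta_j-1)\le-1}
  \le e^{-t_0}\E e^{t_0(\tilde\eta_1-1)}\,\psiq(t_0)^{k-1}
  \le C e^{-ck},
\end{align*}
with $c:=-\log\psiq(t_0)>0$, where the factor $\E e^{t_0(\tilde\eta_1-1)}$ is finite because $\tilde\eta_1=D\wedge K$ is bounded. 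This proves \eqref{pippi}.
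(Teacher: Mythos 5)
Your proof is correct, and it is precisely the ``standard branching process theory'' argument the paper cites without elaboration: the paper's own Lemma~\ref{LG3} uses the same truncation-plus-Chernoff device on the exploration walk in $\ggndd$, and you have cleanly transplanted that strategy to the Lukasiewicz walk of $\cT$ (truncating $\hD-1$ at a level $K$ so that the mean stays $>1$ while the moment generating function becomes finite). The paper also offers an alternative one-line derivation: since the coupling gives $\P(|\cC(V)|=k)\to\P(|\cT|=k)$ for each fixed $k$, the bound of Lemma~\ref{LG3} transfers directly, sparing the repetition of the walk argument. (A minor note: the Chernoff prefactor should be $e^{t_0}$ rather than $e^{-t_0}$, but as $t_0<0$ the latter is merely a weaker constant, so the conclusion is unaffected.)
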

\begin{proof}
This follows by standard branching process theory.

Alternatively, 
  by the coupling in \refSS{SGW}, or by \eqref{lx0} with
$\psi(T):=\indic{|T|=k}$, we have for every fixed $k\ge1$,
$\P\bigpar{|\cC(V)|=k}\to\P\bigpar{|\cT|=k}$, and thus \eqref{pippi} follows
from \eqref{lg3k}.
\end{proof}

\begin{proof}[Proof of \refT{TGpsi}]
  The proofs of the two parts are essentially identical, and we give the
  details only for \ref{TGpsi*}.

  \pfitemref{TGpsi*}
Write $\GG:=\ggndd$.

We truncate. Define, for $\ell,L\ge1$,
\begin{align}
  \psi_\ell(H)&:=\psi(H)\cdot\indic{e(H)=\ell},
             \\
  \psi_{\le L}(H)&:=\psi(H)\cdot\indic{e(H)\le L},
\end{align}
and consider the corresponding $\Psi_\ell$ and $\Psi_{\le L}$.

Let $\eps>0$ be a fixed small number, chosen so small that \refL{LG3} holds,
as well as the following argument.

If $\Psix(\GG)\neq\Psi_{\lepsn}(\GG)$, then either
$e(\cC_1)\le \eps n$, and thus $|\cC_1|\le \eps n+1$,
or $e(\cC_2)>\eps n$ and thus $|\cC_2|>\eps_1 n$ by \refL{LJ1}.
If $\eps$ is small enough, then both events have probability
$O\bigpar{e^{-cn}}$ by \cite[Theorem 2]{BollobasRiordan-old}.
Since \eqref{psibound} implies
$\Psix(\GG), \Psi_{\lepsn}(\GG)=O\bigpar{n^{m+1}}$,
we thus have
\begin{align}\label{aja}
  \E\bigpar{\Psix(\GG)-\Psi_{\lepsn}(\GG)}^2
  =O\bigpar{n^{2m+2}e^{-cn}}
  = O\bigpar{e^{-cn}}.
\end{align}
Hence, it suffices to prove the results for $\Psi_{\lepsn}(\GG)$.

For every $\ell\lepsn$, \eqref{psibound} and \refL{LG3} yield
\begin{align}
  \E |\psi_{\ell}\bigpar{\cC(V)}|
  \le C\ell^m\P\bigpar{e(\cC(V))=\ell}
  \le C\ell^me^{-c\ell},
\end{align}
and thus by \eqref{lxx} and \refL{LG2} (with $M\le C\ell^m$)
\begin{align}
  \E|\Psi_\ell(\GG)|&
  =n \E |\psi_\ell(\cC(V))|
      \le Ce^{-c\ell}n,\label{ajaj}
  \\
  \Var\Psi_\ell(\GG)& \le C \ell^m\ell^2\E|\psi_\ell(\cC(V))|n
                      \le C \ell^{2m+2}e^{-c\ell}n
                      \le C e^{-c\ell}n.
                      \label{ajvar}
\end{align}

Let, for $L\ge1$,
\begin{align}
  X_n&:=n\qqw\bigpar{\Psix(\GG)-\E\Psix(\GG)},\label{baja}
       \\
  X_{n;L}&:=n\qqw\bigpar{\Psi_{\le L}(\GG)-\E\Psi_{\le L}(\GG)}.\label{caja}
\end{align}
Then, assuming $n\ge L/\eps$,
\begin{align}\label{vaja}
  X_n-X_{n;L} = X_n-X_{n;n\eps} +
  \sumlepsn n\qqw\bigpar{\Psi_{\ell}(\GG)-\E\Psi_{\ell}(\GG)},
\end{align}
and thus, using \eqref{aja}, \eqref{ajvar} and Minkowski's inequality,
\begin{align}\label{daja}
  \bigpar{\Var\xpar{X_n-X_{n;L}}}\qq
&  \le
                                       \bigpar{\Var\xpar{X_n-X_{n;\eps n}}}\qq
  +\sumlepsn n\qqw\bigpar{\Var \Psi_{\ell}(\GG)}\qq,
  \notag\\&
  \le C e^{-cn}  +\sumlepsn C e^{-c\ell}
  \le C e^{-c L}.
\end{align}

Furthermore, for every fixed $L\ge1$,
$\psi_{\le L}$ is a functional with finite support, and \eqref{Psi} yields a
finite linear combination
\begin{align}\label{maja}
  \Psi_{\le L}(\GG)=\sum_{e(H)\le L}|H|\psi(H)\ubZZ_H.
\end{align}
Terms where $H$ has a cycle have variance $O(1)$, by \refT{T1} or directly
by \refL{LL2}. Hence, they can be ignored, and \refT{T1}
(with \refR{RAp1})
yields,
by \ref{T1t} and joint convergence for different trees,
\begin{align}\label{eaja}
  X_{n;L}:=\frac{ \Psi_{\le L}(\GG)-\E  \Psi_{\le L}(\GG)}{\sqrt n}
\dto N\bigpar{0,\gss_L},
\end{align}
where
\begin{align}\label{gsspsiL}
  \gss_L&:=\sum_{|T_1|,|T_2|\le L}|T_1||T_2|\psi(T_1)\psi(T_2)\gs_{T_1,T_2},
\end{align}
summing over pairs of trees of order less than $L$.

We will verify below that the sums in \eqref{gsspsi} converge absolutely.
Thus, as \Ltoo,  $\gss_L\to\gss_\psi$ defined in \eqref{gsspsi}.
The estimate \eqref{daja}, which is uniform in $n\ge L/\eps$, implies
\begin{align}
  \lim_{\Ltoo}\limsup_{\ntoo} \E (X_n-X_{n;L})^2
  =
  \lim_{\Ltoo}\limsup_{\ntoo} \Var (X_n-X_{n;L})
  =0.
\end{align}
This together with the limit \eqref{eaja} for each fixed $L$ and
$\gss_L\to\gss_\psi$ imply,
see \eg{} \cite[Theorem 4.2]{Billingsley} or \cite[Theorem 4.28]{Kallenberg},
$X_n\dto N\bigpar{0,\gss_\psi}$, which is \eqref{tgpsi}.

Furthermore, \refT{T1} shows also that $\Var X_{n;L}\to\gss_L$ for every fixed
$L$.
Hence,  Minkowski's inequality and \eqref{daja} imply, for every fixed $L$,
\begin{align}
  \bigabs{(\Var X_n)\qq-\gs_L}
&  \le
  \bigabs{(\Var X_n)\qq-(\Var X_{n;L})\qq}
  +
  \bigabs{(\Var X_{n,L})\qq-\gs_L}
  \notag\\&
  \le
  \bigpar{\Var (X_n-X_{n;L})}\qq
  + o(1)
  \le Ce^{-cL}+o(1),
\end{align}
and thus,
\begin{align}
  \bigabs{(\Var X_n)\qq-\gs_\psi}
&  \le
    \bigabs{(\Var X_n)\qq-\gs_L}
  +  \abs{\gs_L-\gs_\psi}
  \notag\\&
  \le Ce^{-cL}+ \abs{\gs_L-\gs_\psi}
  + o(1).\label{faja}
\end{align}
Take $\limsup_\ntoo$ in \eqref{faja} and then let $L\to\infty$.
This yields
\begin{align}\label{qaja}
\limsup_\ntoo   \bigabs{(\Var X_n)\qq-\gs_\psi}=0,
\end{align}
and thus
$\Var X_n\to\gss_\psi$, which is equivalent to \eqref{tgpsi2}.

Similarly, if $n\ge L/\eps$, then \eqref{aja} and \eqref{ajaj} yield
\begin{align}\label{gaja}
  \E|\Psix(\GG)-\Psi_{\le L}(\GG)|
 & \le
  \E|\Psix(\GG)-\Psi_{\lepsn}(\GG)|+
       \sumlepsn \E|\Psi_\ell(\GG)|
       \notag\\&
  \le C e^{-cn} + C\sumlepsn Ce^{-c\ell}n
  \le Ce^{-cL}n.
\end{align}
Moreover, $\E\Psi_{\le L}(\GG)/n\to\E\psi_{\le L}(\cT)$ by \refL{LG1}, and
$\E\psi_{\le L}(\cT)\to\EE\psi(\cT)$ as \Ltoo{} by \eqref{EE}, noting that
$\EE|\psi(\cT)|<\infty$ by
\eqref{psibound} and \refL{LG4}.
Hence, for any fixed $L$, \eqref{gaja} implies
\begin{align}\label{haja}
  \hskip6em&\hskip-6em
|  \E\Psix(\GG)/n-\EE \psi(\cT)|
\le
  \E|\Psix(\GG)/n-\Psi_{\le L}(\GG)/n|
  \notag\\&{}
  + |  \E\Psi_{\le L}(\GG)/n-\E \psi_{\le L}(\cT)|
  + |\E\psi_{\le L}(\cT)-\EE\psi(\cT)|
  \notag\\&\hskip-2em
  \le Ce^{-cL}+ o(1)   + |\E\psi_{\le L}(\cT)-\EE\psi(\cT)|.
\end{align}
Taking $\limsup_\ntoo$ and then letting $\Ltoo$ yields \eqref{tgpsi1},
similarly to \eqref{qaja}.

Finally, to verify absolute convergence of the sums in \eqref{gsspsi},
suppose temporarily that $\psi(T)\ge0$, so that we may interchange order in
the summations freely. Then, considering the terms in \eqref{gshh}
separately and using \eqref{gl=p}  and \eqref{EE}, 
\begin{align}
  \hskip2em&\hskip-2em
  \sum_{T_1,T_2}|T_1||T_2|\psi(T_1)\psi(T_2)\gd_{T_1,T_2}\gl_{T_1}
  =  \sum_{T}|T|^2\psi(T)^2\gl_{T}
  =  \sum_{T}p_T|T|\psi(T)^2
\notag\\&
  =\EE\bigpar{|\cT|\psi(\cT)^2},\label{pippa}
\\
  \hskip2em&\hskip-2em
  \sum_{T_1,T_2}|T_1||T_2|\psi(T_1)\psi(T_2)\gl_{T_1}\gl_{T_2}e(T_1)e(T_2)
=\Bigpar{\sum_Tp_T\psi(T)e(T)}^2
\notag\\&
=\bigpar{\EE\bigpar{\psi(\cT)e(\cT)}}^2,\label{pippb}
  \\
  \hskip2em&\hskip-2em
             \sum_{T_1,T_2}|T_1||T_2|\psi(T_1)\psi(T_2)\gl_{T_1}\gl_{T_2}
             \sum_{k\ge0}\frac{n_k(T_1)n_k(T_2)}{p_k}
             \notag\\&
  =             \sum_{k\ge0}\frac{1}{p_k}\Bigpar{\sum_Tp_T\psi(T)n_k(T)}^2
  =  \sum_{k\ge0}\frac{1}{p_k}\Bigpar{\EE\bigpar{\psi(\cT)n_k(\cT)}}^2.
  \label{pippc}
\end{align}
The expectations in \eqref{pippa} and \eqref{pippb} are finite by
\eqref{psibound} and \refL{LG4}, and so is each expectation in \eqref{pippc}.
If also the sum in \eqref{pippc} converges, then \eqref{gshh} and
\eqref{pippa}--\eqref{pippc} show the last equality in \eqref{gsspsi}.
In particular,  if \eqref{pippc} is finite, then \eqref{gsspsi} yields,
since $\gss_\psi\ge0$,
\begin{align}\label{pippe}
  \sum_{k\ge0}\frac{1}{p_k}\bigpar{\EE\xpar{n_k(\cT)\psi(\cT)}}^2
  \le\EE\bigpar{|\cT|\psi(\cT)^2}
+\frac{2}{\mu}\bigpar{\EE \xpar{e(\cT)\psi(\cT)}}^2.
\end{align}
For any $\psi\ge0$, this applies to the truncation $\psi_{\le L}$ for any $L$,
since we have
$\psi_{\le L}(T)n_k(T)=0$ for every $T$ when $k>L$, and thus the sum over
$k$ in \eqref{pippc} converges. Hence \eqref{pippe} holds for $\psi_{\le L}$, and
letting $L\to\infty$ shows (by monotone convergence) that \eqref{pippe}
holds for $\psi$ too. We have already seen that the \rhs{} in \eqref{pippe}
is finite, and thus the sum on the left converges, for any $\psi\ge0$
satisfying \eqref{psibound}.

Consequently, all sums in \eqref{pippa}--\eqref{pippc} converge when
$\psi\ge0$.
Hence, applying this to $|\psi|$, we see that for every $\psi$ satisfying
\eqref{psibound}, all sums in \eqref{pippa}--\eqref{pippc} converge
absolutely; hence the equalities in \eqref{pippa}--\eqref{pippc}
hold in general, which verifies \eqref{gsspsi}, with absolute convergence
everywhere.

\pfitemref{TGpsi0}
The main difference is that we use \refT{T2} instead of \refT{T1}.
Since we now assume \ref{AD2}, \eqref{pg} holds, and thus all 
estimates of means and variances
in the proof of \ref{TGpsi*} hold automatically for $\gndd$ too by conditioning.
\end{proof}

\begin{proof} [Proof of \refT{TG}]
  As said in \refE{Egiant}, we apply \refT{TGpsi} with  $\psi(H):=1$,
  together with \refLs{LG00} and  \ref{LGvar} below.
  \end{proof}

  \section{Non-zero variance?}\label{Snull}

The asymptotic variance $\gss_\psi$ in \refT{TGpsi} necessarily satifies
$\gss_\psi\ge0$; however, $\gss_\psi=0$ is possible.
We note first some trivial cases.
\begin{example}\label{Etrivial}
  \begin{alphenumerate}
    
  \item If \ref{Ap1} does not hold, so (in the supercritical case) $p_1=0$,
    then $\gss_\psi=0$ for every $\psi$, see \refR{RRAp1}.
  \item\label{Et1}
    If $\psi(T)=\gd_{T,\sK_1}$, then $\gss_\psi=0$.
    In fact, then $\Psi(G)=n_0(G)$ counts isolated vertices, and thus
$\Psix(\gndd)$ is deterministic.
\item\label{Etc}
  If $\psi(T)=0$ for every tree $T$ with $p_T>0$, then
$\psi(\cT)=0$ \as{} and thus $\gss_\psi=0$.
In this case, 
  $\psi(\cC_j)=0$ for
all but a few components $\cC_j$.
  \end{alphenumerate}
\end{example}

Parts \ref{Et1} and \ref{Etc} of \refE{Etrivial} show that the values of
$\psi(H)$ for $H=\sK_1$, trees $H$ with $p_H=0$, and non-trees $H$, do not
affect $\gss_\psi$.

We conjecture that the trivial cases in \refE{Etrivial}
(in combination) is the only way to get $\gss_\psi=0$ (in the supercritical
case). Formally:

\begin{conjecture}\label{Conj0}
  If \refAAAsuperp{} hold, then
  \begin{align}
  \gss_\psi=0\iff \psi(T)=0\text{ for every tree $T$ with $|T|>1$ and $p_T>0$.}  
  \end{align}
\end{conjecture}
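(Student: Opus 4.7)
The $\Leftarrow$ direction is a short verification. If $\psi(T) = 0$ for every tree $T$ with $|T|>1$ and $p_T>0$, then only $\cT = \sK_1$ contributes to the sums in \eqref{gsspsi}; writing $a := \psi(\sK_1)$ and using $|\sK_1|=1$, $e(\sK_1)=0$, $n_k(\sK_1)=\delta_{k,0}$, one has $\EE\bigpar{|\cT|\psi(\cT)^2} = p_0a^2$, $\EE\bigpar{e(\cT)\psi(\cT)} = 0$, and $\EE\bigpar{n_k(\cT)\psi(\cT)} = \delta_{k,0}p_0a$, so \eqref{gsspsi} gives $\gss_\psi = p_0a^2 + 0 - p_0\qw(p_0a)^2 = 0$.

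For the $\Rightarrow$ direction I would exploit that by \refT{TGpsi} the quadratic form $\gss_\psi$ is positive semidefinite on the space of functionals satisfying \eqref{psibound} (being a limit of non-negative variances), with polarisation
\begin{equation*}
Q(\psi,\phi) = \EE\bigpar{|\cT|\psi(\cT)\phi(\cT)} + \tfrac{2}{\mu}\EE\bigpar{e(\cT)\psi(\cT)}\EE\bigpar{e(\cT)\phi(\cT)} - \sum_k \tfrac{1}{p_k}\EE\bigpar{n_k(\cT)\psi(\cT)}\EE\bigpar{n_k(\cT)\phi(\cT)}.
\end{equation*}
Given $Q(\psi,\psi) = 0$, the inequality $Q(\psi + t\phi,\psi + t\phi) = 2tQ(\psi,\phi) + t^2 Q(\phi,\phi) \ge 0$ for all $t \in \bbR$ forces $Q(\psi,\phi) = 0$ for every admissible $\phi$. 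Taking $\phi = \delta_T$ for a fixed unrooted tree $T$ with $p_T>0$ and dividing by $p_T$ yields the identity
\begin{equation}\label{ELplan}
|T|\psi(T) = \sum_k \alpha_k n_k(T) - \beta(|T|-1), \qquad \alpha_k := \EE\bigpar{n_k(\cT)\psi(\cT)}/p_k,\; \beta := \tfrac{2}{\mu}\EE\bigpar{e(\cT)\psi(\cT)},
\end{equation}
valid for every tree $T$ with $p_T > 0$ (using $e(T) = |T|-1$ for trees).

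The remaining task is a self-consistency argument: substituting the formula \eqref{ELplan} for $\psi$ back into the definitions of the $\alpha_k$ produces a homogeneous linear system on $(\alpha_j)_{j\ge 1,\,p_j>0}$, of the form $\sum_j \alpha_j\bigsqpar{M_{kj} - p_k\delta_{kj} - jp_jR_k/\mu} = 0$ for $k\ge 1,\, p_k>0$, where $M_{kj} := \EE\bigpar{n_k(\cT)n_j(\cT)/|\cT|}$ and $R_k := \EE\bigpar{n_k(\cT)(|\cT|-1)/|\cT|}$; the $\beta$-equation is automatic through the free identity $\mu\beta = \sum_k kp_k\alpha_k$ coming from $2e(T) = \sum_k kn_k(T)$. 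The conjecture then reduces to showing this system has only the trivial solution $\alpha_k = 0$: once that holds, $\beta = 0$ automatically, and for $|T|>1$ (where $n_0(T)=0$ and $\sum_{k\ge 1}kn_k(T) = 2(|T|-1)$) the identity \eqref{ELplan} reads $|T|\psi(T) = 0$, as required.

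The main obstacle, and the reason this is stated as a conjecture, is proving that the above linear system has trivial kernel --- an infinite-dimensional strengthening of the non-singularity in \refL{LM2}. Two natural lines of attack are (a) the probabilistic interpretation $M_{kj} = \EE\bigpar{\indic{d_\cT(V)=k}n_j(\cT)}$ for $V$ uniform in $\cT$, combined with the supercriticality $\E D(D-2) > 0$ from \ref{Asuper}, to give the system enough rigidity; and (b) adapting the leaf-replacement trick from the proof of \refL{LM2}, now analysing the sequences $T^{(j)}$ of iterated replacements along which \eqref{ELplan} propagates linearly, for varying base trees and varying $r$ with $p_r>0$, so as to extract enough independent constraints to rule out non-trivial solutions.
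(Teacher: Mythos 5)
This statement is labelled a \emph{conjecture} in the paper, and the paper explicitly does not prove it (``Although we have not been able to verify \refConj{Conj0}, we can show $\gss_\psi>0$ in many cases''). So the relevant comparison is with the paper's partial results around the conjecture, namely \refE{Etrivial} and \refL{LG0}, and with the special cases settled in \refL{LG00} and \refE{Esusc0}.

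Your $\Leftarrow$ verification is correct and is what \refE{Etrivial}\ref{Et1},\ref{Etc} record: the expectations $\EE$ in \eqref{gsspsi} only see trees $T$ with $p_T>0$, and the $\sK_1$-contribution $p_0 a^2 - p_0^{-1}(p_0a)^2$ cancels. Your $\Rightarrow$ argument reproduces \refL{LG0} exactly. Positive semidefiniteness of the bilinear form comes from \eqref{tgpsi2}, polarisation against $\delta_T$ gives \eqref{gsspsi3}, and your (ELplan) is the same as the paper's \eqref{0a}--\eqref{0b} after the substitution $a_k=\alpha_k-k\beta/2$ and the identity $2e(T)=\sum_k kn_k(T)$; substituting back yields the eigenvalue-type constraint \eqref{0c}, which is the linear system you write down. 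Your ``free identity'' $\mu\beta=\sum_k kp_k\alpha_k$ is indeed automatic, and the final reduction (if $(\alpha_k)\equiv 0$ then $\psi(T)=0$ for $|T|>1$, $p_T>0$) is valid.

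The step you flag as open --- that the homogeneous system has only the trivial kernel --- is precisely the open content of the conjecture, and the paper does not close it either; it only deduces $\gss_\psi>0$ for $\psi\equiv1$ (\refL{LG00}) and $\psi(H)=|H|$ (\refE{Esusc0}), both by reading off contradictions from \eqref{0a}--\eqref{0b}. One caveat on your framing: calling this ``an infinite-dimensional strengthening of \refL{LM2}'' is apt but should not be read as suggesting the gap is small. \refL{LM2} gives non-singularity of every \emph{finite} principal minor $(\gs_{H_i,H_j})$, which suffices for $\psi$ of finite support, but an admissible $\psi$ has unbounded support, and positivity of the full quadratic form cannot be inferred from finite minors alone (the paper's own \refR{R0} points out that in the subcritical regime two extra null directions appear, which is a warning that such infinite forms can degenerate for non-obvious structural reasons). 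Your two proposed lines of attack are sensible but not carried out, so the proposal, like the paper, leaves the conjecture unresolved.
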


\begin{remark}
  \label{R0}
  As said in \refS{Sgiant}, we consider for simplicity only the
  supercritical case in \refT{TGpsi}, although we expect a similar result
  (for $\Psi$) also in the subcritical case under suitable
  conditions. However, note that then there are two further trivial cases
  with $\gss_\psi=0$,
  \viz{}
  $\psi(H)=1$ and $\psi(H)=e(H)/|H|$ as in \refE{Egiant} (and linear
  combinations of them), since then $\Psi(G)=|G|$ and $e(G)$, respectively,
  which are deterministic for $\gndd$.
\end{remark}

Although we have not been able to verify \refConj{Conj0}, we can show
$\gss_\psi>0$ in many cases.

\begin{lemma}\label{LG0}
  Assume \refAAsuperp.
  Suppose that the graph functional $\psi$ satisfies 
  \eqref{psibound} and that $\gss_\psi=0$.
  Then $\psi$ has the form,
  for some real constants $a_k$ and every tree $T$ with $p_T>0$, 
\begin{align}\label{0a}
  \psi(T)=\frac{1}{|T|}\sumko a_k n_k(T),
\end{align}
where, furthermore, for every $k\ge0$, 
\begin{align}\label{0b}
  p_ka_k &=
      \EE\bigpar{n_k(\cT)\psi(\cT)}
           -\frac{kp_k}{\mu}\EE \bigpar{e(\cT)\psi(\cT)}
           \\&
=  \sumjo a_j \EE\Bigpar{\frac{n_k(\cT)n_j(\cT)}{|\cT|}
  -\frac{kp_k}{\mu}\frac{e(\cT)n_j(\cT)}{|\cT|}}.
\label{0c}
\end{align}
\end{lemma}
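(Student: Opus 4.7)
The plan is to exploit the positive semi-definite quadratic structure of $\sigma_\psi^2$ in $\psi$. I would first polarize the formula \eqref{gsspsi} to obtain the symmetric bilinear form
\begin{align*}
B(\psi_1,\psi_2):=\EE\bigpar{|\cT|\psi_1(\cT)\psi_2(\cT)}&+\frac{2}{\mu}\EE\xpar{e(\cT)\psi_1(\cT)}\EE\xpar{e(\cT)\psi_2(\cT)}\\&-\sum_{k\ge0}\frac{\EE\xpar{n_k(\cT)\psi_1(\cT)}\EE\xpar{n_k(\cT)\psi_2(\cT)}}{p_k},
\end{align*}
so that $\sigma_\psi^2=B(\psi,\psi)$. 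The first task is to check that $B$ is well-defined on the class of graph functionals satisfying \eqref{psibound}: via the \CSineq{} in each of the three terms, this reduces to the absolute-convergence bounds already obtained for $\sigma_\psi^2$ in the proof of \refT{TGpsi}.

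Since $\sigma_\psi^2=B(\psi,\psi)\ge0$ always (being a limit of variances), $B$ is PSD. The standard discriminant argument applied to $0\le B(\psi+t\chi,\psi+t\chi)$, $t\in\bbR$, yields the \CSineq{} $B(\psi,\chi)^2\le B(\psi,\psi)B(\chi,\chi)$; hence $\sigma_\psi^2=0$ forces $B(\psi,\chi)=0$ for every admissible $\chi$.

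I would then specialize to $\chi=\chi_T$, the indicator functional $\chi_T(T'):=\indic{T'=T}$, for each finite unlabelled tree $T$ with $p_T>0$; having finite support, $\chi_T$ trivially satisfies \eqref{psibound}. For any functional $F$ on finite trees, $\EE\bigpar{F(\cT)\chi_T(\cT)}=p_T F(T)$, so with $A_\psi:=\EE\xpar{e(\cT)\psi(\cT)}$ and $B_{k,\psi}:=\EE\xpar{n_k(\cT)\psi(\cT)}$ the identity $B(\psi,\chi_T)=0$ becomes, after dividing through by $p_T>0$,
\begin{align*}
|T|\psi(T)+\frac{2e(T)}{\mu}A_\psi-\sum_{k\ge0}\frac{n_k(T)}{p_k}B_{k,\psi}=0.
\end{align*}
Using $2e(T)=\sum_k kn_k(T)$ and setting $a_k:=B_{k,\psi}/p_k-kA_\psi/\mu$ (with $a_k:=0$ when $p_k=0$), this rearranges at once to \eqref{0a} and the first equality of \eqref{0b}; then \eqref{0c} follows by substituting \eqref{0a} back into the definitions of $B_{k,\psi}$ and $A_\psi$, which is legitimate since $\EE$ only sees trees with $p_T>0$.

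The main technical point needing care is really just the PSD/\CS{} bookkeeping for an infinite-dimensional quadratic form, which is handled by the absolute-convergence argument noted above; beyond that, the only subtlety is the $p_k=0$ case. When $p_k=0$, the branching process $\cT$ cannot produce a vertex of degree $k$, so $n_k(\cT)\equiv0$ and $B_{k,\psi}=0$; the first equality of \eqref{0b} then degenerates to $0=0$ and does not determine $a_k$. However, any tree $T$ with $p_T>0$ likewise satisfies $n_k(T)=0$ (by the product formula \eqref{glh}), so the value of $a_k$ is irrelevant in \eqref{0a}, and the paper's conventions $0/0=0$ and $0\cdot\infty=0$ absorb the degeneracy throughout.
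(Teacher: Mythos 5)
Your proposal is correct and follows essentially the same path as the paper: polarize \eqref{gsspsi} to a PSD bilinear form, deduce $\innprod{\psi,\psi'}=0$ for all admissible $\psi'$ via Cauchy--Schwarz, and then test against the indicator $\psi'=\indic{\cdot\cong T}$ and divide by $p_T>0$ to obtain \eqref{0a}--\eqref{0c}. The extra remarks you add (well-definedness of the bilinear form, and the $p_k=0$ degeneracy) are sound and consistent with the paper's conventions.
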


Note that \eqref{0c} is an eigenvalue equation for the vector $(a_k)_k$.

\begin{proof}
  Let $\PPsi$ be the linear space of all graph functionals satisfying
  \eqref{psibound}. 
  The \rhs{} of \eqref{gsspsi}  is a quadratic form in $\psi\in\PPsi$.
  Denote the corresponding symmetric
  bilinear form by $\innprod{\psi_1,\psi_2}$; thus
  $\gss_\psi=\innprod{\psi,\psi}$ and
\begin{align}\label{gsspsi2}
  \innprod{\psi_1,\psi_2}&
=\EE\bigpar{|\cT|\psi_1(\cT)\psi_2(\cT)}
 +\frac{2}{\mu}\EE \xpar{e(\cT)\psi_1(\cT)}\EE \xpar{e(\cT)\psi_2(\cT)}
  \notag\\&
  \qquad -\sum_{k\ge0}\frac{1}{p_k}\EE\xpar{n_k(\cT)\psi_1(\cT)}
    \EE\xpar{n_k(\cT)\psi_2(\cT)}.
\end{align}

Since \eqref{tgpsi2} implies $\gss_\psi\ge0$, the bilinear form
\eqref{gsspsi2}
is positive semidefinite, and thus the \CSineq{} holds for it. In particular, if
$\innprod{\psi,\psi}=\gss_\psi=0$, then $\innprod{\psi,\psi'}=0$ for every
graph functional $\psi'\in\PPsi$. Hence, taking
$\psi'(H):=\indic{H\cong T}$ for a tree $T$, 
\begin{align}\label{gsspsi3}
0
=p_T|T|\psi(T)
 +\frac{2}{\mu}\EE \xpar{e(\cT)\psi(\cT)}p_Te(T)
  -\sum_{k\ge0}\frac{1}{p_k}\EE\xpar{n_k(\cT)\psi(\cT)}
    p_Tn_k(T).
\end{align}
 Thus, for every tree $T$ such that $p_T>0$, using $2e(T)=\sum_k kn_k(T)$,
\begin{align}\label{psi4}
|T|\psi(T)
&=   \sum_{k\ge0}\frac{1}{p_k}\EE\xpar{n_k(\cT)\psi(\cT)} n_k(T)
  -\frac{2}{\mu}\EE \xpar{e(\cT)\psi(\cT)}e(T)
   \notag\\
  &=   \sum_{k\ge0} \Bigpar{\frac{1}{p_k}\EE\xpar{n_k(\cT)\psi(\cT)}
    -\frac{k}{\mu}\EE \xpar{e(\cT)\psi(\cT)}}
    n_k(T),
\end{align}
which yields \eqref{0a}--\eqref{0b}; then \eqref{0c} follows by
    substituting \eqref{0a} in \eqref{0b}.
\end{proof}

\begin{lemma}\label{LG00}
  Assume \refAAsuperp.
Let\/ $\psi(H):=1$. Then $\gss_\psi>0$.
\end{lemma}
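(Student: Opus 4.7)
My plan is to argue by contradiction using \refL{LG0} as the workhorse. Assume $\gss_\psi = 0$ with $\psi \equiv 1$; since $\psi \equiv 1$ satisfies \eqref{psibound} trivially, \refL{LG0} furnishes real constants $(a_k)_{k \ge 0}$ with $|T| = \sum_{k \ge 0} a_k n_k(T)$ for every tree $T$ with $p_T > 0$, and the eigenvalue identity \eqref{0b} holds for every $k$.

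The first (and essentially only substantial) step is to pin down the $a_k$ in the support of $(p_k)$ by testing this linear equation on a rich enough family of small trees. Under \ref{Ap1} we have $p_1 > 0$, so \eqref{t1a} together with \eqref{gl=p} yields $p_{\sK_{1,k}} > 0$ whenever $p_k > 0$ (the isolated vertex $\sK_1$ is handled directly from \eqref{glh}, which gives $\gl_{\sK_1} = p_0$). Evaluating the linear relation on $\sK_2$ forces $a_1 = 1$; on $\sK_{1,k}$ for $k \ge 2$ with $p_k > 0$ (where $n_1(T) = k$, $n_k(T) = 1$, $|T| = k+1$) it forces $a_k = 1$; and on $\sK_1$ when $p_0 > 0$ it gives $a_0 = 1$. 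Hence $a_k = 1$ for every $k$ with $p_k > 0$.

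The second step is a routine summation. I will sum \eqref{0b} over $k \ge 0$. By Tonelli and $\sum_k k p_k = \mu$, the right-hand side collapses to $\EE(|\cT|) - \EE(e(\cT)) = \EE(|\cT| - e(\cT)) = \EE(1) = f(\zeta)$, using the tree identity $|\cT| - e(\cT) = 1$; the exponential tail bound \refL{LG4} ensures $\EE |\cT| < \infty$ under \ref{Asuper}, so no convergence issue arises. The left-hand side equals $\sum_{k:p_k>0} p_k = 1$ by step 1. Hence $f(\zeta) = 1$, which forces $\zeta = 1$ since $p_1 > 0$ makes $f$ strictly increasing on $[0,1]$ with $f(1) = 1$; this contradicts $\zeta \in [0,1)$ under \ref{Asuper}.

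The hard part, insofar as there is one, is step 1: one needs enough trees in the support of $(p_T)$ to determine every $a_k$ with $p_k > 0$. The family of stars accomplishes exactly this under \ref{Ap1}, so no real obstacle arises.
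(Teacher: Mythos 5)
Your proposal is correct and follows essentially the same route as the paper: apply \refL{LG0}, deduce $a_k=1$ for $k$ in the support of $(p_k)$, sum \eqref{0b} over $k$, and derive the contradiction $\P(|\cT|<\infty)=f(\zeta)=1$. The only difference is that you explicitly justify the uniqueness of the $a_k$ by testing on the stars $\sK_{1,k}$, a step the paper asserts without detail; this is a sound and slightly more complete rendering of the same argument.
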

\begin{proof}
  Suppose that $\gss_\psi=0$; thus \eqref{0a}--\eqref{0c} hold by \refL{LG0}.
  Furthermore,
  \eqref{0a} trivially holds with $a_k=1$, and since 
  the coefficients $a_k$ in \eqref{0a} are uniquely determined for every $k$
  with $p_k>0$, \eqref{0b} yields
  \begin{align}
    p_k=p_ka_k = \EE n_k(\cT) -\frac{kp_k}{\mu}\EE e(\cT),
    \qquad k\ge0.
  \end{align}
  Summing over $k$ yields
  \begin{align}
    1=&
        \sum_k\Bigpar{ \EE n_k(\cT) -\frac{kp_k}{\mu}\EE e(\cT)}
    =\EE|\cT|-\EE e(\cT)
    =\EE\bigpar{|\cT|-e(\cT)}
    \notag\\&
    =\EE 1=\P(|\cT|<\infty).
  \end{align}
  This is a contradiction.
\end{proof}

\begin{remark}\label{RG0}
  As said in \refE{Egiant} (and more generally in \refConj{Conj0}),
  we conjecture that $\gss_\psi>0$ also for
$\psi(H):=e(H)/|H|$, and for (non-zero) linear combinations of these two graph
functionals, but we have failed to show this in general,
and leave this as an open problem.

Frank Ball (personal communication) has noted that in the case of bounded
maximum degree treated in his paper \cite{Ball2018} (on epidemics, but with the
giant component as a special case), his formulas \cite[(5.25)--(5.27)]{Ball2018}
express the asymptotic variance as a sum of integrals of squares, making it
easy to show that the asymptotic variance is strictly positive in these
cases.
It seems likely that this formula for the asymptotic variance holds more
generally (perhaps by purely algebraic manipulations), which might lead to a
general proof, but we have not checked the details.
\end{remark}

\begin{example}
  \label{Esusc0}
  Let $\psi(H):=|H|$. \ref{Ap1} shows both $p_1>0$ and the existence
  of $r>1$ with $p_r>0$. There exist arbitrarily large trees $T$ with all
  degrees in \set{1,r}, and they have $p_T>0$.
  However, \eqref{0a} cannot hold for all such trees, since the \rhs{} is
  bounded for them by $|a_1|+|a_r|$, while $\psi(T)=|T|$ is unbounded.
  Hence \refL{LG0} shows that $\gss_\psi>0$, as claimed in \refE{Esusc}.
\end{example}

  \section{The variance of the giant}\label{Svar2}

  \begin{lemma}\label{LGvar}
    If\/ $\psi(H)=1$, then the variance $\gss_\psi$ in \eqref{gsspsi} is
    given by \eqref{tgvar}.
  \end{lemma}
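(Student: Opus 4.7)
With $\psi\equiv1$, the expression \eqref{gsspsi} collapses to
\begin{align*}
\gss_\psi
=\EE|\cT|+\frac{2}{\mu}\bigpar{\EE e(\cT)}^2
-\sum_{k\ge0}\frac{1}{p_k}\bigpar{\EE n_k(\cT)}^2,
\end{align*}
so the task is to compute the three quantities $\EE n_k(\cT)$, $\EE|\cT|$ and $\EE e(\cT)$ in closed form and then check that the resulting expression matches \eqref{tgvar}. The plan is to first compute $\EE n_k(\cT)$ by exploiting the branching structure of $\cT$; the other two are obtained from it by summing, using the relations $|\cT|=\sum_k n_k(\cT)$ and $2e(\cT)=\sum_k k\, n_k(\cT)$.

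For the key step, I would condition on the number of children of the root. Let $m_k:=\E\bigsqpar{n_k(\cT')\indic{|\cT'|<\infty}}$, where $\cT'$ denotes the subtree descending from a single child of the root (a \GW{} tree with offspring distribution $\hD-1$, with tree-degree of its root equal to $\hD$). Since a non-root vertex of tree-degree $k$ has $k-1$ children, each of which must die out for $|\cT'|<\infty$, conditioning on $\hD$ gives
\begin{align*}
m_k=\sum_{d\ge1}\hp_d\bigpar{\indic{d=k}\zeta^{d-1}+(d-1)m_k\zeta^{d-2}}
=\hp_k\zeta^{k-1}+\frac{f''(\zeta)}{\mu}m_k,
\end{align*}
using $\hp_d=dp_d/\mu$ and $\sum_d d(d-1)p_d\zeta^{d-2}=f''(\zeta)$. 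Solving,
\begin{align*}
m_k=\frac{kp_k\zeta^{k-1}}{\mu-f''(\zeta)}.
\end{align*}
An analogous conditioning at the root, using that the root has degree $D$ and that $f'(\zeta)=\mu\zeta$, yields
\begin{align*}
\EE n_k(\cT)=p_k\zeta^k+\mu\zeta\, m_k
=p_k\zeta^k\Bigpar{1+\frac{\mu k}{\mu-f''(\zeta)}}.
\end{align*}

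From this, summing in $k$ and using $f'(\zeta)=\mu\zeta$, I obtain
\begin{align*}
\EE|\cT|=f(\zeta)+\frac{\mu^2\zeta^2}{\mu-f''(\zeta)},
\qquad
\EE e(\cT)=\frac{\mu^2\zeta^2}{\mu-f''(\zeta)},
\end{align*}
where the second identity follows after a short algebraic simplification using $\zeta^2f''(\zeta)+\mu\zeta^2=\zeta^2\bigpar{f''(\zeta)+\mu}$ and $\mu-f''(\zeta)+f''(\zeta)+\mu=2\mu$. Finally, squaring the formula for $\EE n_k(\cT)$ and summing against $1/p_k$ gives, with $\alpha:=\mu-f''(\zeta)$,
\begin{align*}
\sum_k\frac{\bigpar{\EE n_k(\cT)}^2}{p_k}
=f(\zeta^2)+\frac{2\mu\zeta^2}{\alpha}f'(\zeta^2)
+\frac{\mu^2}{\alpha^2}\bigpar{\zeta^4f''(\zeta^2)+\zeta^2f'(\zeta^2)},
\end{align*}
using $\sum_k kp_k\zeta^{2k}=\zeta^2f'(\zeta^2)$ and $\sum_k k^2p_k\zeta^{2k}=\zeta^4f''(\zeta^2)+\zeta^2f'(\zeta^2)$. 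Substituting these three expressions into the displayed formula for $\gss_\psi$ recovers \eqref{tgvar} term by term.

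The only real work is the branching-process recursion that yields $m_k$; everything else is bookkeeping. There are no anticipated obstacles, since the supercriticality assumption \ref{Asuper} gives $\mu-f''(\zeta)>0$ (the derivative condition at the stable fixed point $\zeta$ of the offspring \pgf{} $f'(s)/\mu$), so all denominators are positive, and the absolute convergence of the sums is guaranteed by \refT{TGpsi}\ref{TGpsi*}.
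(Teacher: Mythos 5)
Your proposal is correct and follows essentially the same route as the paper: compute $\EE n_k(\cT)$ by conditioning at the root and in the branches, sum in $k$ to get $\EE|\cT|$ and $\EE e(\cT)$, expand the square in the last sum against $1/p_k$, and assemble. The only minor cosmetic differences are that you obtain $m_k$ by a self-similarity fixed-point equation (rather than the paper's generation-by-generation sum $\sum_j(\E Y_2)^j$ in the conditioned subcritical tree $\cT_2$, which of course geometrically sums to the same thing), and you compute $\EE e(\cT)$ via $2e(\cT)=\sum_k k\,n_k(\cT)$ while the paper uses the shortcut $\EE e(\cT)=\EE(|\cT|-1)=\EE|\cT|-f(\zeta)$; both give the same answer.
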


  \begin{proof}      
Let $\cT_1$ be the Galton--Watson tree with offspring distribution
$\Di:=\hD-1$, with $\hD$ given by \eqref{hD}. Recall that $\cT$ has a root
with a random number $D$ of copies of $\cT_1$ attached to it.
The \pgf{} of $\Di$ 
is given by, using \eqref{hD}
and \eqref{f},
\begin{align}\label{ea}
  f_1(z):=\E z^{Y_1}=\E z^{\hD-1}=\sumk \frac{kp_k}{\mu}z^{k-1}
  =\mu\qw f'(z).
\end{align}
The probability that the supercritical Galton--Watson process $\cT_1$
is finite is the unique root $\zeta\in[0,1)$ of $f_1(\zeta)=\zeta$,
which by \eqref{ea} is equivalent to \eqref{eb}.

Let $\cT_2$ be $\cT_1$ conditioned on being finite.
Then, see \eg{} \cite[Theorem 1.12.3]{AthreyaNey},
$\cT_2$ is another Galton--Watson process,
which is subcritical and has an offspring distribution $\Dii$ with \pgf{},
using \eqref{ea},
\begin{align}\label{ec}
  f_2(z):=\E z^{\Dii} =\frac{f_1(\zeta z)}{\zeta}
  =\frac{f'(\zeta z)}{\mu \zeta}.
\end{align}
In particular,
\begin{align}\label{ed}
\E\Dii=f_2'(1)
  =\frac{f''(\zeta )}{\mu}.
\end{align}
Since $\cT_2$ is subcritical, thus $f''(\zeta)/\mu<1$.

Let $\nx_{k}(\cT_2)$ be the number of vertices in $\cT_2$ with outdegree
$k$. Then, by a standard calculation for subcritical Galton--Watson trees,
summing the expected number of such vertices in generation $j\ge0$,
and using \eqref{ec}--\eqref{ed} and  \eqref{hD},
\begin{align}\label{ef}
  \E \nx_k(\cT_2)
&  =\sum_{j=0}^\infty(\E \Dii)^j\P(\Dii=k)
  =\frac{1}{1-\E\Dii}\zeta^{k-1}\P(\Di=k)
                    \notag\\&
  =\frac{\zeta^{k-1}\P(\hD=k+1)}{1-\E\Dii}
  =\frac{(k+1)p_{k+1}\zeta^{k-1}}{\mu-f''(\zeta)}.
\end{align}
If the root of $\cT$ has degree $\ell$, then the tree is finite with
probability $\zeta^\ell$, and conditioned on this event, it has
$\ell$ branches that are copies of $\cT_2$. Hence,
using \eqref{ef} and \eqref{eb},
\begin{align}
  \EE n_k(\cT)
&  = \sum_{\ell=0}^\infty p_l\zeta^\ell\bigpar{\gd_{k\ell}+\ell\E\nx_{k-1}(\cT_2)}
  =p_k\zeta^k+
  \sum_{\ell=0}^\infty p_l\zeta^\ell\ell \frac{kp_k\zeta^{k-2}}{\mu-f''(\zeta)}
                 \notag\\&
  =p_k\zeta^k+
  f'(\zeta) \frac{kp_k\zeta^{k-1}}{\mu-f''(\zeta)}
  =p_k\zeta^k+
  \frac{kp_k\zeta^{k}\mu}{\mu-f''(\zeta)}.
\end{align}
Summing over $k$ we find
\begin{align}\label{eh}
\EE|\cT|&=\sumko  \EE n_k(\cT)
  =f(\zeta)+
  \frac{\zeta f'(\zeta)\mu}{\mu-f''(\zeta)}
  =f(\zeta)+   \frac{\mu^2\zeta^2}{\mu-f''(\zeta)},
  \\
  \EE e(\cT)&=  \EE\bigpar{|\cT|-1}
              =\EE|\cT|-\P(|\cT|<\infty)
                =\EE|\cT|-f(\zeta)
              \notag\\&
    =  \frac{\mu^2\zeta^2}{\mu-f''(\zeta)}.
\end{align}
Hence, \eqref{gsspsi} with $\psi=1$ yields
\begin{align}
  \gss&=f(\zeta)+\frac{\mu^2\zeta^2}{\mu-f''(\zeta)}
  +2 \frac{\mu^3\zeta^4}{(\mu-f''(\zeta))^2}
  -\sumko p_k\Bigpar{1+k\frac{\mu}{\mu-f''(\zeta)}}^2\zeta^{2k},
\end{align}
which yields \eqref{tgvar} by expanding the square and summing,
using \eqref{f}.
  \end{proof}

  \section{Random degrees}\label{Srandom}
  One reason for the importance of the model $\gndd$ is that for several
  models of random graphs, if we condition on the degree sequence, then we
  obtain a graph of the type $\gndd$, for the observed degree sequence $\ddn$.
This includes the \ER{} graphs $\gnp$ and $\gnm$, and  several others,
see \eg{} \cite{BrittonDeijfenML}.
Such random graphs can thus be regarded as $\gndd$ based on a random degree
sequence $\ddn$. We obtain easily results for such graphs too, by
conditioning on $\ddn$. Note that, as pointed out \eg{} by \citet{BallNeal},
the randomness in the degree sequence will in general affect the asymptotic
variance.
We illustrate this by considering in some detail the counts of small
isolated trees in the basic theorem \refT{T2}.
Similar versions of \eg{} \refTs{TG} and \ref{TGpsi} follow similarly;
see \cite{BallNeal} for the variance of the size of the giant component
in \refT{TG}.

\begin{theorem}\label{TR}
  Let $(p_k)\xoo$ be a probability distribution satisfying \ref{Ap1}.
  Suppose that $\ddn$ is random, and such that, with $\mu:=\sum_k kp_k$,
  \begin{align}
    \frac{n_k-p_kn}{\sqrt n}&\dto \xi_k,
    \qquad k=0,1,\dots, \label{tr}
    \\
  \frac{\sum_k kn_k-\mu n}{\sqrt n}&\dto \sumk k\xi_k,\label{tr1}
  \end{align}
  jointly, where $\xi_k$ are jointly normal with $\E\xi_k=0$ and
  some covariances  $\Cov(\xi_k,\xi_\ell)=\gam_{k\ell}$
  with $\sum_{k,\ell}k\ell|\gam_{k\ell}|<\infty$,
  and furthermore that
  \begin{align}\label{tr2}
    n\qw\sumk k^2 n_k \pto \sum_k p_k k^2<\infty.
  \end{align}
  Then, for any tree $H$,
  \begin{align}\label{trx}
    \frac{\ubZ_H-n\gl_H}{\sqrt n}\dto N\bigpar{0,\bgss_H},
  \end{align}
  where $\gl_H$ is as in \eqref{glh}; furthermore, joint convergence holds
  for several trees $H$, with limit $N(0,\bgS)$, where the covariance matrix
  $\bgS$ is given by
  \begin{align}
    \label{bgss}
    \bgs_{H_1,H_2}:=\gs_{H_1,H_2}+\gl_{H_1}\gl_{H_2}\sum_{k,\ell=0}^\infty
    \Bigpar{\frac{n_k(H_1)}{p_k}-\frac{k}{\mu}e(H_1)}
    \Bigpar{\frac{n_\ell(H_2)}{p_\ell}-\frac{\ell}{\mu}e(H_2)}\gam_{k\ell}.
  \end{align}
\end{theorem}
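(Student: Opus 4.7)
The plan is to reduce to \refT{T2} by conditioning on $\ddn$ and combining the resulting conditional CLT with an independent Gaussian fluctuation of the conditional mean around $n\gl_H$. First I would decompose
\begin{align*}
\frac{\ubZ_H-n\gl_H}{\sqrt n} = \underbrace{\frac{\ubZ_H-\E\bigpar{\ubZ_H\mid\ddn}}{\sqrt n}}_{=:A_n(H)} + \underbrace{\frac{\E\bigpar{\ubZ_H\mid\ddn}-n\gl_H}{\sqrt n}}_{=:B_n(H)}.
\end{align*}
By \eqref{tr}, \eqref{tr1} and \eqref{tr2}, the random sequence $\ddn$ satisfies the hypotheses \refAAAp{} of \refT{T2} in probability, so by Skorokhod representation I may pass to a coupling in which these hold \as{} and then apply \refT{T2} along each realization. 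For \aex{} $\ddn$ and jointly for trees $H_1,\dots,H_r$, this yields the conditional convergence $\bigpar{A_n(H_1),\dots,A_n(H_r)}\dto N(0,\gS)$ given $\ddn$, with $\gS=\bigpar{\gs_{H_i,H_j}}$ as in \refT{T2}; the moment convergence asserted in \refT{T2} upgrades this to pointwise a.s.\ convergence of the conditional characteristic function $\E\bigpar{e^{\ii tA_n(H)}\mid\ddn}\to e^{-t^2\gss_H/2}$, and analogously for linear combinations over several $H_i$.

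Next I would analyse $B_n(H)$ via the exact configuration-model formula. Using \eqref{unorm} conditionally on \aex{} $\ddn$ I may replace $\E\bigpar{\ubZ_H\mid\ddn}$ by $\E\bigpar{\ubZZ_H\mid\ddn}$ at a cost of $o_p(\sqrt n)$, and then plug in \eqref{bko} combined with \eqref{uZ}. Writing $\tilde n_k:=n_k-p_kn$ and $\tilde N:=N-\mu n=\sum_kk\tilde n_k$, a first-order Taylor expansion yields
\begin{align*}
\frac{\E\bigpar{\ubZZ_H\mid\ddn}}{n\gl_H} = 1 + \sum_{k:\,p_k>0}\frac{n_k(H)\tilde n_k}{p_kn} - \frac{e(H)\tilde N}{\mu n} + O_p\bigpar{n^{-1}},
\end{align*}
where the error is controlled using \eqref{tr2} and the tightness of $\tilde n_k/\sqrt n$ and $\tilde N/\sqrt n$ furnished by \eqref{tr}--\eqref{tr1}. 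Indices $k$ with $p_k=0$ cause no difficulty: either $n_k(H)=0$, or else $\gl_H=0$ and a direct bound using $n_k=o_p(\sqrt n)$ in that case shows $B_n(H)\pto 0$. Consequently
\begin{align*}
B_n(H)\dto B(H):=\gl_H\sumko\xi_k\Bigpar{\frac{n_k(H)}{p_k}-\frac{k\,e(H)}{\mu}},
\end{align*}
and $B(H)$ is a centred Gaussian whose covariance is precisely the second summand in \eqref{bgss}; joint convergence for several trees follows from the joint convergence \eqref{tr}--\eqref{tr1}.

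To assemble the two pieces I would apply the Cram\'er--Wold device: fix $t_1,\dots,t_r\in\bbR$ and set $A_n:=\sum_it_iA_n(H_i)$, $B_n:=\sum_it_iB_n(H_i)$. Since $B_n$ is $\ddn$-measurable, conditioning gives
\begin{align*}
\E e^{\ii(A_n+B_n)} = \E\Bigpar{e^{\ii B_n}\,\E\bigpar{e^{\ii A_n}\mid\ddn}}.
\end{align*}
Along the Skorokhod sequence the inner factor converges \as{} to $\exp\bigpar{-\tfrac12\sum_{i,j}t_it_j\gs_{H_i,H_j}}$ (and is bounded by $1$), while $e^{\ii B_n}\to e^{\ii B}$ a.s.\ with $B:=\sum_it_iB(H_i)$ Gaussian; dominated convergence then delivers the unconditional limit, and since the two sources of fluctuation combine additively in the limiting characteristic function, the covariance matrices add and produce $\bgS$ as in \eqref{bgss}. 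The main technical obstacle I foresee is the quantitative Taylor expansion above: making its error genuinely $o_p(1)$ on the scale of interest requires handling the random factorials $(n_k)_{h_k}$ and $((N-1))_{e(H)}$ simultaneously, and the integrability of $\tilde N/\sqrt n$ supplied by \eqref{tr2} is what carries the argument. A smaller but essential point is the passage from a conditional CLT to an unconditional joint CLT, for which it is crucial to invoke the \emph{moment} convergence in \refT{T2} (rather than merely convergence in distribution) so that dominated convergence applies to the conditional characteristic function above.
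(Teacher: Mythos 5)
Your proof proposal follows essentially the same route as the paper: decompose into a conditional CLT fluctuation plus a fluctuation of the conditional mean around $n\gl_H$, pass to a Skorokhod coupling so that the hypotheses of \refT{T2} hold a.s.\ in the degree sequences, Taylor-expand the exact formula \eqref{bko} to identify the conditional-mean fluctuation as $\gl_H\sumko\bigpar{h_k/p_k-k\,e(H)/\mu}\xi_k$, treat the degenerate $p_k=0$ indices via $\xi_k=0$ a.s., and assemble the two independent Gaussian pieces by conditioning the characteristic function and using bounded convergence. One remark: your closing sentence overstates the role of moment convergence in \refT{T2}. Conditional convergence in distribution already gives pointwise a.s.\ convergence of $\E\bigpar{e^{\ii tA_n}\mid\ddn}$ for each fixed $t$, and since both $e^{\ii B_n}$ and the conditional characteristic function are bounded by $1$, bounded convergence applies without any moment information; the paper's proof accordingly does not invoke moment convergence here.
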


\begin{proof}
  For convenience, we use 
  the Skorohod coupling theorem \cite[Theorem 4.30]{Kallenberg},
  and may thus assume that the limits in \eqref{tr}--\eqref{tr2} hold a.s.
Then \refAAA{} hold a.s., and thus \refT{T2} applies, conditioned on the
degree sequence. Hence, conditionally,
\begin{align}\label{tr4}
  \frac{\ubZ_H-\E\bigpar{\ubZZ_H\mid\ddn}}{\sqrt n}\dto N\bigpar{0,\gss_H}.
\end{align}
Consider a tree $H$ and let $h_k:=n_k(H)$.
If $\gl_H>0$,  then, a.s., 
using \eqref{bko} and remembering that $n_k$ and $N$ now are random,
together with the a.s.\ versions of \eqref{tr} and \eqref{tr1},
\begin{align}
  \frac{\E\bigpar{\ubZZ_H\mid\ddn}}{n\gl_H}
  &=
    \frac{{1+O(n\qw)}}{n\aut(H)\gl_H}N^{-e(H)}\prod_k n_k^{h_k}k!^{h_k}
    \notag\\&
  =\bigpar{1+O(n\qw)}
   \parfrac{N}{n\mu}^{-e(H)}\prod_k \parfrac{n_k}{np_k}^{h_k}
    \notag\\&
  =
  1-\frac{e(H)}{\mu\sqrt n}\sumk k\xi_k+\sumko  \frac{h_k}{p_k\sqrt n}\xi_k
  +o\bigpar{n\qqw}
    \notag\\&
  =
  1+n\qqw\sumko\Bigpar{ \frac{h_k}{p_k}-\frac{k}{\mu}e(H)}\xi_k
  +o\bigpar{n\qqw}
  \label{tr6}
\end{align}
and thus
\begin{align}
  \frac{ \E\bigpar{\ubZZ_H\mid\ddn}-n\gl_H}{\sqrt n}
  \dto \Xi_H:=\gl_H\sumko\Bigpar{ \frac{h_k}{p_k}-\frac{k}{\mu}e(H)}\xi_k.
  \label{tr5}
\end{align}

On the other hand, if $\gl_H=0$, then $p_k=0$ for some $k$ with $h_k>0$.
Since $p_k=0$, \eqref{tr} implies $\xi_k\ge0$ \as, and thus $\xi_k=0$
a.s., so $n_k=o(n\qq)$ a.s. Hence, 
\eqref{bko} implies $\E\bigpar{\ubZZ_H\mid\ddn}=o\bigpar{n\qq}$ a.s.,
and thus \eqref{tr5} holds in this case too, with $\Xi_H=0$.

Since \eqref{tr4} holds, with the same limit, conditioned on $\ddn$, the
limits \eqref{tr4} and \eqref{tr5} hold jointly, with independent limits.
Hence, we can take their sum and obtain \eqref{trx} with
$\bgss_{H_1,H_2}:=\gss_{H_1,H_2}+\Cov\bigpar{\Xi_{H_1},\Xi_{H_2}}$, \ie,
\eqref{bgss}.
\end{proof}

\begin{example}\label{Erandomd}
  One case studied also by \citet{BallNeal} is when the degrees $d_i$ are
  \iid{} random variables that are copies of a given $D$,
  which we assume satisfies  $\E D^2<\infty$.
  (We ignore one half-edge if the sum $N$ otherwise becomes odd.)
  Then,
  by the central limit theorem and the law
  of large numbers,
  \eqref{tr}--\eqref{tr2} hold with
  \begin{align}\label{byy}
    \gam_{k\ell}=\gd_{k\ell}p_k-p_kp_l.
  \end{align}
  Hence, \refT{TR} shows that \eqref{trx} holds.
  Furthermore, the double sum in \eqref{bgss} is by \eqref{byy}
    \begin{multline}
      \sum_{k=0}^\infty p_k
      \Bigpar{\frac{n_k(H_1)}{p_k}-\frac{k}{\mu}e(H_1)}
      \Bigpar{\frac{n_k(H_2)}{p_k}-\frac{k}{\mu}e(H_2)}
      \\ -
      \sum_{k=0}^\infty p_k
      \Bigpar{\frac{n_k(H_1)}{p_k}-\frac{k}{\mu}e(H_1)}
            \sum_{\ell=0}^\infty p_\ell
    \Bigpar{\frac{n_\ell(H_2)}{p_\ell}-\frac{\ell}{\mu}e(H_2)}
.\label{k2}    \end{multline}
 By simple algebra, using $\sum_k n_k(H)=|H|$ and $\sum_k kn_k(H)=2e(H)$,
the first sum in \eqref{k2} equals
\begin{align}
        &\sum_{k=0}^\infty 
  \frac{n_k(H_1)n_k(H_2)}{p_k}
  -\frac{4e(H_1)e(H_2)}{\mu}+\frac{\E D^2}{\mu^2}e(H_1)e(H_2)
\end{align}
and the second and third are just
\begin{align}
|H_j|-e(H_j)=1.
\end{align}
Hence, \eqref{bgss} and \eqref{gshh} yield, after some interesting
cancellations,
\begin{align}
\bgs_{H_1,H_2}
=
\gd_{H_1,H_2}  \gl_{H_1}
+
\gl_{H_1}\gl_{H_2}\Bigpar{
\frac{\E D(D-2)}{\mu^2}e(H_1)e(H_2) -1}.
\label{exo}
\end{align}
\end{example}

\begin{example}\label{ER}
  Consider the \ER{} graphs
  $\gnp$ and $\gnm$, where we keep the average degree constant by
  choosing $p=\glc/n$ and $m=\glc n/2+o(n\qq)$ for
  some fixed $\glc>0$.
The number of isolated trees of a given size in \gnm{} was studied already
by \citet{ER}, but
in the range of $m$ considered here,
their result contains an error as was pointed out by \citet{Barbour}, who
proved asymptotic normality for
$\gnp$ using Stein's method. The result was extended by \citet{BarbourKR} to
counts of isolated copied of individual trees, our $\ubZ_T$;
the method yields also joint convergence.
(We are not aware of any similar result on asymptotic normality proved
for \gnm, but such results
might be in the literature.)
Although this result thus can be proved directly in a rather simple way,
at least for $\gnp$, we find it instructive to see how it follows, for both
models, from the general results in the present paper.

Asymptotic normality of $n_k$ was shown for \gnp{} by \cite{BarbourKR}, see
also \cite[Example 6.35]{JLR}.
This was extended to \gnm{}  in \cite[Theorem 4.1]{SJ190},
which implies (as a consequence of \cite[(4.2)]{SJ190}) that
\eqref{tr}--\eqref{tr2} hold for both $\gnp$ and $\gnm$ (with $p=\glc/n$ and
$m=\glc n/2+o(n\qq)$ as above), with $p_k=\glc^k e^{-\glc}/k!$, the Poisson
$\Po(\glc)$ distribution, and
asymptotic covariances
\begin{align}
  \gam_{k\ell}
  =
  \begin{cases}
    \gd_{k\ell}p_k -p_kp_\ell+ \frac{(k-\glc)(\ell-\glc)}{\glc}p_kp_\ell,
    & \gnp,
    \\[2pt]
    \gd_{k\ell}p_k -p_kp_\ell- \frac{(k-\glc)(\ell-\glc)}{\glc}p_kp_\ell,
    & \gnm.
  \end{cases}
      \label{exa}
\end{align}
Comparing with \refE{Erandomd}, we see that $\gam_{k\ell}$ has an additional
term, with different signs in the two cases.
To calculate the contribution from that term to \eqref{bgss}, we calculate,
recalling $D\sim\Po(\mu)$ so $\E D^2=\mu^2+\mu$,
\begin{align}
  \hskip2em&\hskip-2em
  \sumko \Bigpar{\frac{n_k(H)}{p_k}-\frac{k}{\mu}e(H)}(k-\glc)p_k
  =\sumko(k-\glc)n_k(H)-\sumko\frac{p_k k(k-\glc)}{\mu}e(H)
             \notag\\&
  =2e(H)-\glc |H|-e(H)
  =-(\glc-1)|H|-1.
  \label{exb}
\end{align}
Define $\chi:=+1$ for $\gnp$ and $\chi:=-1$ for $\gnm$.
Then, using \eqref{exa} in \eqref{bgss} yields, 
using \eqref{exo} and \eqref{exb} in the calculations,
  \begin{align}
&\bgs_{H_1,H_2}
=
\gd_{H_1,H_2}  \gl_{H_1}
    \notag\\&\;+
\gl_{H_1}\gl_{H_2}\Bigpar{
    \frac{\glc-1}{\glc}e(H_1)e(H_2) -1
+\frac{\chi}{\glc}\bigpar{(\glc-1)|H_1|+1}\bigpar{(\glc-1)|H_2|+1}
    }.
\label{exox}
\end{align}

Thus, \refT{TR} yields asymptotic normality of the counts of isolated trees,
in both \gnp{} and \gnm, with asymptotic covariances \eqref{exox}.  
\end{example}

\begin{remark}
  The model in \refE{Erandomd} with $D\sim\Po(\mu)$ thus gives a result
  with a covariance matrix \eqref{exo} that is half-way between the results
  for $\gnp$ and $\gnm$. As has been remarked before, the same is seen in
  the much simpler (but related) case $e(G)$ of the number of edges;
  elementary calculations yield $\Var e(G)=\Var(N/2)\sim \mu n/4$ for the
  model in \refE{Erandomd}, $\Var e(G)\sim\mu n/2$ for $\gnp$, and of course
  $\Var e(G)=0$ for \gnm.
\end{remark}

\section*{Acknowledgement}
I thank Frank Ball for helpful correspondence.

\newcommand\AAP{\emph{Adv. Appl. Probab.} }
\newcommand\JAP{\emph{J. Appl. Probab.} }
\newcommand\JAMS{\emph{J. \AMS} }
\newcommand\MAMS{\emph{Memoirs \AMS} }
\newcommand\PAMS{\emph{Proc. \AMS} }
\newcommand\TAMS{\emph{Trans. \AMS} }
\newcommand\AnnMS{\emph{Ann. Math. Statist.} }
\newcommand\AnnPr{\emph{Ann. Probab.} }
\newcommand\CPC{\emph{Combin. Probab. Comput.} }
\newcommand\JMAA{\emph{J. Math. Anal. Appl.} }
\newcommand\RSA{\emph{Random Struct. Alg.} }
\newcommand\ZW{\emph{Z. Wahrsch. Verw. Gebiete} }
\newcommand\DMTCS{\jour{Discr. Math. Theor. Comput. Sci.} }

\newcommand\AMS{Amer. Math. Soc.}
\newcommand\Springer{Springer-Verlag}
\newcommand\Wiley{Wiley}

\newcommand\vol{\textbf}
\newcommand\jour{\emph}
\newcommand\book{\emph}
\newcommand\inbook{\emph}
\def\no#1#2,{\unskip#2, no. #1,} 
\newcommand\toappear{\unskip, to appear}

\newcommand\arxiv[1]{\texttt{arXiv:#1}}
\newcommand\arXiv{\arxiv}

\end{document}